\documentclass[leqno,11pt,letterpaper, english]{amsart}

\usepackage{amsmath,amssymb,amsthm,graphicx,url}
\usepackage{epstopdf}
\usepackage{color}
\usepackage{dsfont}
\usepackage{enumitem}

\usepackage{mathrsfs}

\newcommand{\xqedhere}[1]{%
    \rlap{%
         \hbox to#1{%
           \hfil
           \llap{%
               \ensuremath{\square}
           }%
       }%
   }%
}


\def\pasdegrille{\let\grille = \pasgrille}

\def\aat#1#2#3{
\divide \dimen1 by 48 \dimen3=\dimen1 \multiply \dimen1 by #1
\advance \dimen1 by -\dimen3 \divide \dimen1 by 101 \multiply
\dimen1 by 100 \divide \dimen2 by \count11 \multiply \dimen2 by #2
\setbox0=\hbox{#3}\ht0=0pt\dp0=0pt
  \rlap{\kern\dimen1 \vbox to0pt{\kern-\dimen2\box0\vss}}\dimen1= \wd1
\dimen2=\ht1}
\def\pasgrille{
\count12= \dimen1 \divide \count12 by 50 \divide \dimen2 by
\count12 \count11 =\dimen2 \ \divide \dimen1 by 48
\setlength{\unitlength}{\dimen1} \smash{\rlap{\ }} \dimen1= \wd1
\dimen2=\ht1 }
\def\grille{
\count12= \dimen1 \divide \count12 by 50 \divide \dimen2 by
\count12 \count11 =\dimen2 \ \divide \dimen1 by 48
\setlength{\unitlength}{\dimen1}
\smash{\rlap{\graphpaper[1](0,0)(50, \count11)}} \dimen1= \wd1
\dimen2=\ht1 }

\pasdegrille


\DeclareGraphicsRule{*}{mps}{*}{}

\setlength{\textheight}{8in} \setlength{\oddsidemargin}{0.0in}
\setlength{\evensidemargin}{0.0in} \setlength{\textwidth}{6.4in}
\setlength{\topmargin}{0.18in} \setlength{\headheight}{0.18in}
\setlength{\marginparwidth}{1.0in}
\setlength{\abovedisplayskip}{0.2in}

\setlength{\belowdisplayskip}{0.2in}

\setlength{\parskip}{0.05in}
\pagestyle{headings}
\newcommand{\ud}{\,\mathrm{d}}


\newcommand{\Op}{{\operatorname{Op}_h}}

\newcommand{\Opc}{{\operatorname{Op}_1}}

\newcommand{\WFh}{\operatorname{WF}_h}

\newcommand{\WFsh}{\operatorname{WF}_{s,h}}

\newcommand{\be}{\begin{equation}}
\newcommand{\ee}{\end{equation}}

\newcommand{\cI}{\mathcal I}

\newcommand{\cB}{{\mathcal B}}

\newcommand{\cG}{{\mathcal G}}

\newcommand{\cM}{{\mathcal M}}
\newcommand{\cO}{{\mathcal O}}

\newcommand{\cS}{{\mathcal S}}
\newcommand{\cT}{{\mathcal T}}

\newcommand{\comp}{{\mathbb C}}
\newcommand{\N}{{\mathbb N}}
\newcommand{\R}{{\mathbb R}}

\newcommand{\G}{{\mathcal G}}

\newcommand{\CC}{{\mathbb C}}
\newcommand{\NN}{{\mathbb N}}

\newcommand{\RR}{{\mathbb R}}

\newcommand{\supp}{\operatorname{supp}}

\renewcommand{\Re}{\mathop{\rm Re}\nolimits}
\renewcommand{\Im}{\mathop{\rm Im}\nolimits}

\theoremstyle{plain}

\newtheorem{thm}{Theorem}

\newtheorem{prop}{Proposition}[section]
\newtheorem{cor}[prop]{Corollary}
\newtheorem{lem}[prop]{Lemma}

\theoremstyle{definition}

\newtheorem{defn}[prop]{Definition} 

\numberwithin{equation}{section}

\def\squarebox#1{\hbox to #1{\hfill\vbox to #1{\vfill}}}

\newcommand{\eps}{{\epsilon}}

\usepackage{amsxtra}

\ifx\pdfoutput\undefined
  \DeclareGraphicsExtensions{.pstex, .eps}
\else
  \ifx\pdfoutput\relax
    \DeclareGraphicsExtensions{.pstex, .eps}
  \else
    \ifnum\pdfoutput>0
      \DeclareGraphicsExtensions{.pdf}
    \else
      \DeclareGraphicsExtensions{.pstex, .eps}
    \fi
  \fi
\fi

\title[Gevrey WKB method]{Gevrey WKB method for Pseudodifferential Operators of real principal type}

\author[R. Lascar]{Richard Lascar}
\address{LJAD, Universit\'e C\^ote-d'-Azur, 28 Parc Valrose, 06028 Nice cedex, France }
\email{richard.lascar@univ-cotedazur.fr}

\author[I. Moyano]{Iv\'an Moyano}
\address{LJAD, Universit\'e C\^ote-d'-Azur, 28 Parc Valrose, 06028 Nice cedex, France }
\email{imoyano@unice.fr}

\usepackage{amssymb}
\usepackage{amsmath, amsthm, amsopn, amsfonts}

\begin{document}    

\begin{abstract}
In this paper we investigate the conjugation of Fourier Integral Operators (FIOs) associated to Gevrey phases and symbols and the corresponding semiclassical pseudodifferential operators (pdos) in the Gevrey class. We obtain an Egorov theorem compatible with Gevrey FIOs and real principal part pdos with Gevrey symbols. As a consequence, we obtain a justification of the usual microlocal WKB expansion for Gevrey pdos which are of real principal part at a point in the phase space, with the natural Gevrey subexponential asymptotics with respect to the semiclassical parameter.
\end{abstract}   

\maketitle

\tableofcontents

\section{Introduction}  
\label{sec:Introduction}

The use of Gevrey functions in the study of some linear and non-linear partial differential equations has experienced a significant developement in recent years. In particular, the use of Gevrey norms have been instrumental in the context of some non-linear Cauchy problems that may be ill-posed in the usual Sobolev context \cite{GerardVaretMasmoudi} or the study of some singular limits difficult to describe rigorously using other topologies \cite{MouhotVillani,BedrossianMasmoudiMouhot}. \par

Let us recall that, as usual, for $N \in \NN$ and $s>0$ a function $f\in C^{\infty}(\RR^N;\comp)$ is Gevrey of order $s$ (Gevrey-$s$ for short, or simply $\cG^s$)  whenever for every compact $K \subset \RR^N$ the following holds:
\begin{equation}
\exists C_{K} > 0, \quad \forall \alpha \in \NN^N, \qquad \vert \partial^{\alpha} f \vert_{L^{\infty}(K)} \leq C_K^{\vert \alpha \vert +1} \alpha !^s. 
\label{eq:defGevrey}
\end{equation} As can be seen from (\ref{eq:defGevrey}), the case $s=1$ coincides with the usual class of analytic functions, but the cases $s>1$ may contain smooth functions which are not analytic. Hence, the Gevrey class may be understood as a class of functions in between the analytic and the smooth context. \par

The use of pseudodifferential calculus associated to particular Gevrey classes, started by Boutet de Monvel and Kree in their seminal paper, \cite[Sect. 1]{BoutetKree}, has also been important in the past, for example in connexion with the study of propagation of Gevrey singularities of these pseudodifferential operators \cite{BLascar,LascarLascarMelrose} or the diffraction of waves around an obstacle \cite{LebeauGevrey3}. More recently, the study of the FBI transform in the Gevrey context has risen some interesting questions related to the quantization in the complex plane of symbols admitting only quasi-holomorphic extensions which in fact are not unique \cite{HitrikLascarSjostrandZerzeri} and thus introduces new aspects with respect to the analytic framework \cite{SjostrandAsterisque,HitrikSjostrand,MelinSjostrand}.

In this work we are concerned with the study of Gevrey pseudodifferential operators and the resulting WKB expansion in the Gevrey class. In what follows, we introduce some definitions of these Gevrey symbols and the related pseudodifferential operators and then state our main results. We work in a semiclassical framework (cf. \cite{Zworski}) involving a possibly small parameter $h>0$.

Let us note, for $n \in \NN$, $m\in \RR$ and $s>0$, the set $\cS_s^m(\RR^n \times \RR^n)$ of semiclassical Gevrey $s$ symbols of order $m$. Following \cite[Sect. 1]{BoutetKree} we write $a \in \cS^m_s$ if and only if for some $C>0$
\begin{equation}
\vert \partial_x^{\alpha} \partial_{\theta}^{\beta} a(x,\theta,h)  \vert \leq C^{1 + \vert \alpha \vert + \vert \beta \vert}  \alpha!^s \beta!^s h^{-m}
\label{eq:Gevrey symbol}
\end{equation} for all $(x,\theta)\in \RR^n \times \RR^n$ and $\alpha,\beta \in \NN^n$. Observe that this condition is more precise than the usual definition of the class $\cS^m_{1,0}$ (cf. for example (cf. \cite[Chapter 4]{Zworski})) because the constant $C$ in (\ref{eq:Gevrey symbol}) is uniform with respect to the multi-indices $\alpha,\beta$. As a consequence, we shall need a specific version of symbolic calculus adapted to the class $\cS^m_s$ (in particular Proposition \ref{lemma:composition} below). Finally, recall that a pdo $A $ is elliptic at a point $(x_0,\theta_0) \in \RR^n \times \RR^n$ whenever its symbol $a$ satisfies $|a(x_0,\theta_0)| > 0$.

We consider as usual (cf. \cite[Chapter 4]{Zworski}) semiclassical pseudo-differential operators (pdo for short) defined as suitable extensions of 
\begin{equation*}
a(x,hD,h)u(x) = \frac{1}{(2 \pi h)^n} \iint a(x,\theta,h) e^{\frac{i}{h}\theta \cdot (x-y)} u(y) \ud y \ud \theta, \qquad u \in \mathscr{S}(\RR^n),
\end{equation*} for a given symbol $a \in \cS^m_s$. One has $a(x,hD,h) = \Op(a)$ and also $\Op(a) = \Opc(a_h)$ with $a_h(x,\theta) = a(x,h \theta)$. We shall use the notation $\Op(a) \in \Psi^m_s$ for such a pdo. \par

\subsection{Motivations, hypothesis and main results}

The motivation of our work is twofold. On the one hand, the WKB method is known to hold true in the smooth ($\mathcal{C}^{\infty}$) and analytic categories, which raises the natural question of its vaidity with respect to the specific Gevrey asymptotics required in the Gevrey framework (in particular of the form (\ref{eq:Gevrey small})). Our Theorem \ref{thm:WKB} makes this point explicit at least microlocally. On the other hand, we obtain this microlocal WKB expansion as a consequence of Theorem \ref{thm:Egorov1}, which is a Gevrey version of the Egorov theorem, a central result in microlocal analysis, first published in \cite{Egorov}, having its own independent interest. We state next our main hypothesis and the context of our results. \par 

Let $P=P(x,hD_x,h)$ a semiclassic Gevrey$-s$ PDO of order zero in $\RR^n$, with $n\geq 2$. Let $p=p(x,\xi)$ be the principal symbol of $P$ and let $H_p$ be the associated Hamiltonian. We assume the following hypothesis. 
    \begin{description}
    \item[(H1)] $p$ is real, 
    \item[(H2)] $dp(x_0,\xi_0) \not = 0$, 
    \item[(H3)] and  $p(x_0,\xi_0) = 0 $.
    \end{description} If $P=P(x,hD_x,h)$ satisfies hypothesis \textbf{(H1)}, \textbf{(H2)} and \textbf{(H3)}, it is customary to say that $P$ is of \emph{real principal type} at the point $(x_0,\xi_0) \in \RR^n \times \RR^n$ (cf. \cite[Definition 3.1]{EgorovBook}). \par     
    
The class of real principal type operators enjoys an important property allowing to reduce a general pdo to a canonical form. This is known as Egorov theorem, which in the $C^{\infty}$ class states roughly the following (cf. \cite[Proposition 3.1]{EgorovBook} for instance, among other references as \cite[Chapter 8]{Zworski}, \cite[Section 62]{Eskin}, \cite{LernerEgorov}): If $A = A(x,D_x)$ and $B = B(x,D_x)$ are two pdo of real principal part with the same principal symbol, then there exists elliptic pdo $R=R(x,D_x)$ and $S=S(x,D_x)$ of order zero such that the conjugation $AR - SB$ is a smoothing operator. Of course, that this theorem is valid for a large class of pseudo-differential (possibly semiclassical) operators, including Gevrey symbols (see below for a precise definition). On the other hand, if we assume further regularity properties on the class of symbols at hand, for instance Gevrey regularity (see \ref{eq:defGevrey} for a definition), we may expect to get more precise information on the canonical transformations and the pseudo-differential calculus involved. In \cite{Gramchev} gave a first Gevrey version, but the result is not adapted to the semiclassical pdos and in particular to the WKB asymptotics. In this paper we aim at proving a Gevrey version of Egorov theorem which is sensitive to a small parameter $h$. We do this by using the classical WKB approach.

\subsubsection{Main results: Gevrey WKB expansion and Gevrey Egorov's theorem}

Our main result is a microlocal semiclassical WKB expansion for real principal part operators in the Gevrey setting compatible with the usual required asymptotics in the Gevrey framework.
    
    \begin{thm}[Gevrey WKB expansion] Let $n\geq 2$. Let $P=P(x,hD_x,h)$ be a semiclassical $\G^s$ pdo of order zero in $\RR^n$ of symbol $p=p(x,\xi)$ and assume that $P$ is of real principal type at a point $(x_0,\xi_0) \in \RR^n \times \RR^n$.
     Let $S \subset \RR^n$ be a real $\G^s-$hypersurface of $\RR^n$ transversal to $H_p$ at  $(x_0,\xi_0)$  . Let $\varphi \in \cG^s(\R^n)$ be given. Assume that 
     \begin{equation*}
     p(x,\varphi_x')= 0 \qquad \textrm{and} \qquad \xi_0 = \varphi_x'(x_0).
     \end{equation*}
     Under these hypothesis, one may solve the WKB problem near $x_0$, i.e.: If $a_0$ and $b$ are given symbols in $\cS^0_s(\RR^n)$, one may find some $a \in \cS^0_s(\RR^n )$ such that 
     \begin{equation*}
     \left\{ \begin{array}{cc}
     \frac{1}{h} e^{-i \frac{\varphi}{h} } P\left(  a e^{i \frac{\varphi}{h} }  \right)  - b   =  \cO_{\cG^s}(h^{\infty}),  &  \textrm{close to } x_0, \\
     a\vert_S = a_0. & 
     \end{array} \right.
     \end{equation*}
     \label{thm:WKB}
    \end{thm}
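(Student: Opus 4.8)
The plan is to run the classical WKB scheme while keeping track of Gevrey-$s$ seminorms. Since the phase $\varphi$ is given and already solves the eikonal equation $p(x,\varphi_x')=0$, only the transport hierarchy has to be produced. I would start from the conjugation formula for the action of a semiclassical pdo on a WKB state: expanding the symbol of $P$ around $\xi=\varphi_x'$ in the defining oscillatory integral gives, for $a\in\cS_s^0$ and on a fixed small ball around $x_0$,
\[
e^{-i\varphi/h}\,P\!\left(a\,e^{i\varphi/h}\right)\;\sim\;\sum_{m\ge 0}h^m\,L_m a,\qquad L_0 a=p(x,\varphi_x')\,a,\quad L_1=\tfrac1i\bigl(V+c_1\bigr),
\]
with $V=\sum_j\partial_{\xi_j}p(x,\varphi_x')\,\partial_{x_j}$ the characteristic field (the $x$-projection of $H_p$ restricted to $\Lambda_\varphi=\{\xi=\varphi_x'\}$), $c_1\in\cG^s$ collecting $\tfrac12\operatorname{tr}(\partial_\xi^2 p\cdot\varphi'')$ and the subprincipal-type terms, and $L_m$ for $m\ge 2$ a $\cG^s$-differential operator of order $\le m$ built from $\partial_\xi^\alpha p(x,\varphi_x')$ with $2\le|\alpha|\le 2m$ and the Taylor coefficients of order $\ge 2$ of $\varphi$ at $x$ (which enter only through $\psi_x(y):=\varphi(y)-\varphi(x)-\langle\varphi_x',y-x\rangle=\mathcal{O}(|y-x|^2)$). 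The feature that makes the Gevrey setting work — and that forces the use of the uniform estimates \eqref{eq:Gevrey symbol} and the calculus of Proposition~\ref{lemma:composition} rather than the ordinary $\cS^m_{1,0}$ calculus — is that the $\cG^s$ seminorms of the coefficients of $L_m$ grow in a \emph{controlled} (factorial-power) way in $m$, because the constant $C$ in \eqref{eq:Gevrey symbol} is uniform in the multi-indices and $\varphi\in\cG^s$.

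Dividing by $h$ and using $L_0 a\equiv 0$, the problem becomes $\sum_{m\ge 0}h^m L_{m+1}a=b+\cO_{\cG^s}(h^\infty)$ with $a|_S=a_0$; inserting the Ansatz $a\sim\sum_{j\ge0}h^j a_j$ together with $b=\sum_j h^j b_j$, $a_0=\sum_j h^j a_{0,j}$ and matching powers of $h$ yields, for each $j\ge 0$,
\[
L_1 a_j \;=\; b_j-\sum_{k=1}^{j}L_{k+1}a_{j-k},\qquad a_j|_S=a_{0,j}.
\]
Since $S$ is a $\cG^s$-hypersurface transversal to $H_p$ at $(x_0,\xi_0)$, the vector $\partial_\xi p(x_0,\xi_0)=V(x_0)$ is not tangent to $S$, so $V$ is transversal to $S$ at $x_0$. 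A $\cG^s$ change of coordinates straightening the integral curves of $V$ and sending $S$ to $\{t=0\}$ turns $L_1$ into $\tfrac1i\partial_t+c(t,y')$ with $c\in\cG^s$; hence each transport equation above has a unique solution near $x_0$, obtained by integration in $t$, and the corresponding solution operator is linear and sends $\cG^s$ data to $\cG^s$ solutions. This defines all the $a_j\in\cS_s^0$ recursively.

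The heart of the matter — and the step I expect to be the main obstacle — is to carry the right \emph{quantitative} Gevrey bounds through this recursion, namely estimates of the form $|\partial^\alpha a_j|\lesssim A^{\,|\alpha|+j+1}\alpha!^{\,s}j!^{\,s}$ on a fixed neighbourhood of $x_0$ (possibly after passing to a shrinking sequence of neighbourhoods, in the spirit of the Cauchy–Kovalevskaya majorant method, trading domain size against numbers of derivatives). The inductive step must control the effect of applying $L_{k+1}$ (order $\le k+1$, coefficients with Gevrey-$s$ seminorm of growth $\lesssim C_0^{\,k+1}(k+1)!^{\,s}$-type) to $a_{j-k}$, using the Leibniz rule together with $(\alpha+\gamma)!^s\le 2^{s(|\alpha|+|\gamma|)}\alpha!^s\gamma!^s$, and then summing over $1\le k\le j$ and applying the bounded linear solution operator; the closure of the induction rests on a convexity inequality of the type $\sum_{k=1}^{j}(k+1)!^{\,s}(j-k)!^{\,s}\lesssim j!^{\,s}$ up to a geometric factor. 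It is precisely the exponent $s$ on these factorials that makes the sums converge, which is the structural reason the Gevrey-$s$ scale is the natural one here; executing the estimate while keeping every constant uniform in the multi-indices and in $j$, and making the combinatorics of the $L_m$'s sharp enough, is the delicate part.

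Finally I would invoke a Gevrey Borel–Ritt resummation to produce $\tilde a\in\cS_s^0$ with $\tilde a-\sum_{j<N}h^j a_j=\cO_{\cG^s}(h^N)$ for every $N$ (indeed with a subexponential remainder $\cO(\exp(-c\,h^{-1/s}))$, thanks to the $A^j j!^s$ growth). By construction every asymptotic coefficient of $\tfrac1h e^{-i\varphi/h}P(\tilde a\,e^{i\varphi/h})-b$ vanishes, so this quantity is $\cO_{\cG^s}(h^\infty)$ near $x_0$. To make the boundary condition exact, set $r:=(\tilde a-a_0)|_S=\cO_{\cG^s}(h^\infty)$, let $w$ solve the homogeneous equation $L_1 w=0$ with $w|_S=r$ (so $w=\cO_{\cG^s}(h^\infty)$, whence $L_m w=\cO_{\cG^s}(h^\infty)$ for all $m$), and put $a:=\tilde a-w$; then $a\in\cS_s^0$, $a|_S=a_0$, and $\tfrac1h e^{-i\varphi/h}P(a\,e^{i\varphi/h})-b=\cO_{\cG^s}(h^\infty)$ near $x_0$, as required. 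As an alternative route, once the Gevrey Egorov theorem (Theorem~\ref{thm:Egorov1}) is available one may conjugate $P$ microlocally near $(x_0,\xi_0)$, through an elliptic Gevrey FIO, to the model operator $hD_{x_1}$ modulo $\cO_{\cG^s}(h^\infty)$ — for which the WKB problem is solved explicitly by integrating in $x_1$ — and transport the solution back; the direct argument above, however, uses only the symbolic calculus of Proposition~\ref{lemma:composition}.
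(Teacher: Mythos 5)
Your construction is sound, but it follows a genuinely different route from the paper's. The paper does not run the WKB scheme directly on the given phase $\varphi(x)$: it deduces Theorem~\ref{thm:WKB} from the Gevrey Egorov theorem (Theorem~\ref{thm:Egorov1}), i.e.\ it first writes $P=hD_{x_1}+Q$, solves the eikonal problem (\ref{eq:Thm2 eikonal}) for a phase $\varphi(x,\eta')$ with Cauchy data $x'\cdot\eta'$, builds an elliptic Gevrey FIO $F$ with that phase, and reduces $P$ microlocally to $hD_{x_1}$ by composing with a microlocal inverse $G$ and iterating the transport recursion (\ref{eq:stationary equations}) of Lemma~\ref{lem:transport recursif}; the WKB statement is then read off from the conjugated model $hD_{x_1}$. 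This is exactly the ``alternative route'' you mention in your last sentence. Your primary argument instead stays on the physical side: expand $e^{-i\varphi/h}P(ae^{i\varphi/h})$ by the Gevrey stationary phase calculus (Proposition~\ref{lemma:composition}), obtain the transport hierarchy $L_1a_j=b_j-\sum_{k\ge1}L_{k+1}a_{j-k}$ with data on $S$ (transversality of $S$ to $H_p$ giving a nondegenerate characteristic field), close a Gevrey-$s$ induction, and resum by Gevrey Borel. This is more elementary in that it avoids FIO composition and microlocal inverses, and it matches the statement of Theorem~\ref{thm:WKB} more directly; what it does not yield is the Egorov theorem itself, which is the paper's main structural product. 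One caveat: the step you flag as ``the delicate part'' --- uniform bounds of the type $|\partial^{\alpha}a_j|\le A^{|\alpha|+j+1}\alpha!^{s}j!^{s}$ propagated through the recursion, so that $(a_j)_j$ is a formal Gevrey symbol in the sense of Definition~\ref{eq:symbole formel} and the resummed error is $\cO_{\cG^s}(h^{\infty})$ as in (\ref{eq:Gevrey small}) --- is precisely where the paper's technical work lives: it is handled by the quasinorms (\ref{eq:quasinorms}), the dilation trick $T\mapsto T(1+\eps)^s$ with $\eps\sim\delta/j$ leading to (\ref{eq:seminorms estimate induction}), and then Carleson's theorem (Theorem~\ref{thm:Carleson}). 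Your sketch (Leibniz plus a factorial convexity bound, shrinking neighbourhoods in Cauchy--Kovalevskaya style) is the right kind of argument, but as written it is only a plan; to make it a proof you should either carry out that bookkeeping or import the quasinorm machinery of Section 2.4 essentially verbatim, which is what the paper does.
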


The smallness condition in Theorem \ref{thm:WKB} is defined in (\ref{eq:Gevrey small}). As mentioned before, we shall obtain Theorem \ref{thm:WKB} as a consequence of a particular version of Egorov's theorem in the Gevrey setting, according to our next result.

 \begin{thm}[Gevrey Egorov theorem]
     Let $P=P(x,hD_x,h)$ be a $\cG^s$ pdo with principal symbol $p=p(x,\xi)$. Assume that $P$ is of real principal type at some point $(x_0,\xi_0)\in T^* \RR^n.$ If  
     \begin{equation*}
     p(x_0,\xi_0)= 0, \quad \frac{\partial p}{\partial \xi}(x_0,\xi_0)   \not = 0, 
     \end{equation*} then $P$ is microlocally conjugate to $hD_{x_1}$ by $\cG^s$ FIOs. 
\label{thm:Egorov1}
\end{thm}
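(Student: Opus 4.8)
The plan is to reduce $P$ to the model operator $hD_{x_1}$ in two stages, mirroring the classical argument but carefully tracking Gevrey-$s$ estimates at every step. First I would work at the level of symbols and Hamiltonian flows: since $p$ is real with $\partial_\xi p(x_0,\xi_0)\neq 0$, the classical theory of real principal type operators produces a symplectic change of coordinates straightening $H_p$, i.e. a canonical transformation $\kappa$ defined near $(x_0,\xi_0)$ with $\kappa^*(\xi_1)=p$. The key new point is that because $p\in\mathcal G^s$, the Hamiltonian vector field $H_p$ has Gevrey-$s$ coefficients, and hence (by the Gevrey version of the Cauchy–Kovalevskaya / ODE-with-Gevrey-coefficients theory — flows of $\mathcal G^s$ vector fields are $\mathcal G^s$ in time and initial data) the straightening map $\kappa$ and its inverse are themselves $\mathcal G^s$ symplectomorphisms. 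This is where one uses transversality of a $\mathcal G^s$ hypersurface to $H_p$ to fix initial data for the flow and obtain $\kappa$ concretely.

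Next I would realize $\kappa$ by a Gevrey FIO. One constructs a $\mathcal G^s$ generating phase $\psi$ for $\kappa$ (again solving an eikonal-type equation with $\mathcal G^s$ data, so $\psi\in\mathcal G^s$), quantizes it with an elliptic $\mathcal G^s$ amplitude to get a $\mathcal G^s$ FIO $U$ associated to $\kappa$, and checks that $U$ is microlocally invertible near $(x_0,\xi_0)$ with $\mathcal G^s$ parametrix — this is the FIO-conjugation and symbolic-calculus machinery for Gevrey classes referred to in the abstract, using Proposition \ref{lemma:composition} for the composition estimates. Egorov's theorem then gives
\[
U^{-1} P\, U = Q, \qquad \text{with principal symbol } \sigma(Q)=\kappa^*p=\xi_1,
\]
so $Q = hD_{x_1} + h\,R$ microlocally, where $R\in\Psi^0_s$. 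In this step the precise uniform-in-$\alpha,\beta$ structure of $\cS^m_s$ matters: the conjugated symbol is obtained by a stationary-phase / composition formula whose remainder must be controlled with constants independent of the multi-indices, which is exactly what the Gevrey symbolic calculus in the paper is designed to provide.

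Finally I would remove the lower-order term $hR$ by a further conjugation by an elliptic $\mathcal G^s$ pdo $A=\operatorname{Op}_h(a)$, $a\sim 1$: solving the transport equation $hD_{x_1}a = -h\,r\,a$ (equivalently $D_{x_1}a=-r a$) along the $x_1$-lines with initial condition $a|_{x_1=0}=1$ yields $a\in\mathcal G^s$ by the same Gevrey ODE estimates, and then $A^{-1}(hD_{x_1}+hR)A = hD_{x_1}$ microlocally near the base point. Composing, $(\,UA\,)^{-1}P(UA)=hD_{x_1}$ with $UA$ a $\mathcal G^s$ FIO, which is the assertion. The main obstacle, and the real content of the paper, is the second paragraph: showing that all the geometric objects (flow, generating phase, FIO kernel, parametrix) stay in the Gevrey class with the uniform-in-multi-index bounds of \eqref{eq:Gevrey symbol}, i.e. developing a stationary-phase and symbolic calculus for $\cS^m_s$ robust enough that the compositions $U^{-1}PU$ and $A^{-1}QA$ land back in $\Psi^0_s$ — the $C^\infty$ and analytic templates do not transfer verbatim because of the factorial loss $\alpha!^s$ one must absorb at each differentiation.
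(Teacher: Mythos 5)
Your first two stages are essentially the paper's Steps 1--3, packaged slightly more abstractly: rather than first straightening $H_p$ by a symplectomorphism $\kappa$ and then quantizing it, the paper uses $\partial_\xi p(x_0,\xi_0)\neq 0$ to write $p=\xi_1-\lambda(x,\xi')$ as in (\ref{eq:Thm2 evolution eq symbol}), solves the eikonal problem (\ref{eq:Thm2 eikonal}) in the class $\cG^s$, and conjugates by a single elliptic $\cG^s$ FIO $F$ with that phase; the eikonal equation makes the leading term of $PF-F(hD_{x_1})$ vanish and leaves a remainder in $\Psi_s^{-1}$, which is exactly your $hR$. Up to that point the two arguments agree in substance.

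The gap is in your final step. Conjugating $hD_{x_1}+hR$ by an elliptic pdo $A=\Op(a)$ with $a$ solving the \emph{single} transport equation $D_{x_1}a=-ra$, $a|_{x_1=0}=1$, does not give $hD_{x_1}$ microlocally: by the composition formula (\ref{eq:Taylor}) both the commutator $[hD_{x_1},A]$ and the product $RA$ generate new terms one order lower, so this conjugation only improves the remainder from $\Psi_s^{-1}$ to $\Psi_s^{-2}$. To conclude one must iterate indefinitely, i.e. solve the full hierarchy (\ref{eq:stationary equations}) with $a_0$ elliptic and $a_j|_{x_1=0}=0$, and --- this is the decisive Gevrey point your sketch omits --- prove that the resulting sequence $(a_j)_{j\ge 0}$ satisfies the $j$-uniform bounds (\ref{eq:condition symbole precisee}), i.e. is a \emph{formal} $\cG^s$ symbol. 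That is the content of Lemma \ref{lem:transport recursif} and its quasinorm induction (\ref{eq:seminorms estimate induction}); only with those bounds can the Carleson--Borel theorem (Theorem \ref{thm:Carleson}) resum $\sum_j a_j$ into an actual symbol in $\cS^0_s$ whose associated error is $\cO_{\cG^s}(h^{\infty})$ in the exponential sense (\ref{eq:Gevrey small}), which is what microlocal conjugation by $\cG^s$ FIOs (Definition \ref{def:microlocal conjugation}) demands. A single transport equation, or even the infinite hierarchy without the $C^{j}j!^{s}$ control of the $a_j$, yields at best conjugation modulo $O(h^{N})$ for every $N$ in the $C^\infty$ sense, not modulo a Gevrey-small remainder; your closing remark about uniform-in-multi-index constants points at the right difficulty, but the place where it must actually be deployed --- the transport hierarchy and its resummation --- is missing from your argument.
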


We give a precise definition of microlocal conjugation in Definition \ref{def:microlocal conjugation}, after having introduced the necessary objects for the reader's convenience.

\begin{thm}[Egorov's theorem in the Gevrey class using FBI transforms]
Let $P = P(y, D_y)$ be a $G^s-$differential operator of degree $m$ such that near $(y_0,\eta_0) \in T^* \RR^n \setminus \left\{ 0 \right\}$, the symbol $p(y,\eta)$ is of real principal type. Then, one can find a $G^s$-FBI transform as (\ref{eq:FBI}) and a canonical transform $\kappa$ such that  
\begin{equation*}
hD_{\Re x_1} \mathcal{T}u - h^m \mathcal{T} P(y,D_y) u = \cO_{\cG^{2s-1}}(h^{\infty}),
\end{equation*} with $\kappa(y_0,\eta_0) = (x_0, \xi_0)$, $\xi_0 = \frac{2}{i} \partial_x \phi(x_0)$.
\label{thm:Egorov}
\end{thm}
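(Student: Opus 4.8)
The plan is to reduce Theorem~\ref{thm:Egorov} to Theorem~\ref{thm:Egorov1} by transporting the real-principal-type structure through a Gevrey FBI transform. First I would recall the FBI (Bargmann-type) transform $\mathcal T$ attached to a quadratic phase $\phi$, as in~(\ref{eq:FBI}), which maps $L^2(\RR^n_y)$ into a weighted space $H_{\Phi_0}$ of holomorphic functions on $\CC^n_x$, with $\Phi_0$ strictly plurisubharmonic. In the Gevrey-$s$ setting one must use the quasi-holomorphic (almost analytic) machinery of \cite{HitrikLascarSjostrandZerzeri}, so that $\mathcal T$ intertwines $G^s$-pseudodifferential calculus on the real side with a calculus of quasi-holomorphic symbols on the FBI side; the price of this almost-analytic approximation is precisely the loss of one derivative in the Gevrey index, which accounts for the $\cG^{2s-1}$ appearing in the conclusion rather than $\cG^s$. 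Under $\mathcal T$, the operator $h^m P(y,D_y)$ is conjugated to an $h$-pseudodifferential operator $\widetilde P = \mathcal T P \mathcal T^{-1}$ (acting on $H_{\Phi_0}$) whose (full, quasi-holomorphic) symbol $\tilde p$ restricts, on the real outgoing manifold $\Lambda_{\Phi_0} = \{(x, \tfrac2i \partial_x \Phi_0(x))\}$, to $p \circ \kappa_\phi^{-1}$, where $\kappa_\phi$ is the canonical transformation generated by the FBI phase. Since $p$ is of real principal type at $(y_0,\eta_0)$ and $\kappa_\phi$ is a canonical diffeomorphism, $\tilde p$ is of real principal type at $(x_0,\xi_0) = \kappa_\phi(y_0,\eta_0)$ with $\xi_0 = \tfrac2i \partial_x\phi(x_0)$, i.e.\ hypotheses \textbf{(H1)}--\textbf{(H3)} hold for $\widetilde P$ at that point.

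Next I would apply the already-established Gevrey Egorov theorem, Theorem~\ref{thm:Egorov1}, to $\widetilde P$: there is a $\cG^s$ (hence, after the almost-analytic reduction, $\cG^{2s-1}$) FIO $U$ and its microlocal inverse $V$ such that $V \widetilde P U = hD_{x_1} + \cO_{\cG^{2s-1}}(h^\infty)$ microlocally near $(x_0,\xi_0)$. Composing, set $\mathcal T' = U^{-1}\mathcal T$ (equivalently, absorb $U$ into the FBI transform, using that the composition of an FBI transform with a Gevrey FIO is again an admissible $G^s$-FBI transform after a change of phase, cf.\ the normal-form discussion for FBI phases); the new canonical transformation is $\kappa = \kappa_U^{-1}\circ\kappa_\phi$, still satisfying $\kappa(y_0,\eta_0) = (x_0,\xi_0)$ and $\xi_0 = \tfrac2i\partial_x\phi(x_0)$ for the adjusted phase $\phi$. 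With this modified transform one gets
\begin{equation*}
hD_{\Re x_1}\mathcal T u - h^m \mathcal T P(y,D_y)u = \bigl(hD_{\Re x_1} - V\widetilde P U\bigr)\mathcal T u = \cO_{\cG^{2s-1}}(h^\infty),
\end{equation*}
the last equality because $hD_{x_1}$ on the FBI side is (modulo the weight and modulo $\cO(h^\infty)$) the operator $hD_{\Re x_1}$ on $H_{\Phi_0}$, a standard fact about FBI/Bargmann realizations of $hD_{x_1}$.

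The main obstacle, and where I would spend most of the effort, is the bookkeeping of the Gevrey loss and the $h^\infty$ remainders through the almost-analytic FBI calculus: unlike the analytic case, the quasi-holomorphic symbols are only approximately holomorphic with errors of the form $\exp(-c h^{-1/s})$, so one must check that composition with the $G^s$ FIO from Theorem~\ref{thm:Egorov1} and the passage from the full symbol to its restriction on $\Lambda_{\Phi_0}$ preserve remainders of the type $\cO_{\cG^{2s-1}}(h^\infty)$ in the sense of~(\ref{eq:Gevrey small}), and that the composite phase still defines a legitimate strictly plurisubharmonic FBI weight. A secondary technical point is verifying that the microlocal inverse $V$ of the Egorov FIO $U$ exists globally enough on the relevant neighbourhood of $(x_0,\xi_0)$ so that the conjugation $V\widetilde P U$ is genuinely $hD_{x_1}$ up to $\cO_{\cG^{2s-1}}(h^\infty)$ there; this is routine given Theorem~\ref{thm:Egorov1}, but the uniformity of constants in the Gevrey estimates~(\ref{eq:Gevrey symbol}) must be tracked carefully because the whole point of the Gevrey framework is that these constants are multi-index independent.
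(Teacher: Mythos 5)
Your route is genuinely different from the paper's: you fix a Bargmann-type FBI transform once and for all, conjugate $P$ to the FBI side, and then invoke Theorem \ref{thm:Egorov1} there, absorbing the resulting FIO into the transform. The paper never takes this detour: it constructs the transform of Theorem \ref{thm:Egorov} directly, building the canonical map $\kappa$ by composing a Darboux reduction of $p$ to $\eta_1$ with the Bargmann canonical relation (Proposition \ref{prop:kappa}), solving the eikonal equation for the phase $\varphi$ on the totally real submanifold $\Gamma=\{(x,y(x))\}$ (Proposition \ref{prop:phase varphi}), and solving transport equations for the amplitude on $\Gamma$ followed by almost-holomorphic extension and Borel--Carleson summation (Proposition \ref{prop:symbol a} together with Theorem \ref{thm:Carleson}).

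The genuine gap in your argument is that essentially all of the content that makes the statement nontrivial in the Gevrey category is assumed rather than proved. Concretely: (i) the claim that $\widetilde P=\mathcal T P\mathcal T^{-1}$ is an $h$-pseudodifferential operator on the FBI side whose symbol restricts on $\Lambda_{\Phi_0}$ to $p\circ\kappa_\phi^{-1}$ \emph{up to remainders of the form (\ref{eq:Gevrey small})} is itself an exact Egorov statement for Gevrey symbols; since $p$ admits only almost-analytic extensions, this requires a complex stationary phase argument with quantitative Gevrey remainders (note also that $\mathcal T^{-1}$ exists only as an approximate left inverse, which must be accounted for). (ii) The same machinery is needed to justify that $U^{-1}\mathcal T$, a composition of a complex-phase FBI transform with a real-phase Gevrey FIO, is again an admissible transform of the form (\ref{eq:FBI}) with nondegenerate phase satisfying $\xi_0=\frac{2}{i}\partial_x\phi(x_0)$, and to trade $hD_{x_1}$ from Theorem \ref{thm:Egorov1} for $hD_{\Re x_1}$ up to acceptable errors. (iii) Theorem \ref{thm:Egorov1} as proved applies to operators in $\Psi^m_s$ on $\R^n$; applying it to $\widetilde P$, which acts on the weighted space attached to $\Phi_0$, requires identifying the two calculi, which is again point (i). In the paper this quantitative content is concentrated in Lemma \ref{lem:MelinSjostrandGevrey}, the Gevrey Melin--Sj\"ostrand stationary phase with almost-analytic phase and Stokes contour deformation, whose proof is precisely where the index $2s-1$ arises (balancing $h^{-\ell}$ against $\exp(-|\Im Z|^{-1/(s-1)}/C)$). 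Your proposal names the $2s-1$ loss but treats its source as routine bookkeeping; without an argument at the level of Lemma \ref{lem:MelinSjostrandGevrey}, or an equally precise result covering compositions of FBI transforms with $\cG^s$ differential operators and real FIOs, the reduction to Theorem \ref{thm:Egorov1} does not amount to a proof.
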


Observe that this theorem is valid for a large class of pseudo-differential (possibly semiclassical) operators, including Gevrey symbols (see below for a precise definition).

\subsection{Strategy and outline}

In Section \ref{sec:Stationary} we recall some non-stationary and stationary asymptotics with complex Gevrey phase and Gevrey symbols and review the symbolic calculus adapted to Gevrey pdo. We also discuss formal symbols and Carleson's theorem.

In Section \ref{sec:FIO} we introduce the class of Fourier integral operators associated to Gevrey phases and symbols that will be crucial in the proof of our main theorems.

In Section \ref{sec:ProofEgorov} we prove Theorem \ref{thm:Egorov1} by using a microlocal WKB expansion, involving the usual steps: reduction to an evolution equation, construction of a suitable phase as a solution to an eikonal equation and finally the construction of a suitable symbol by imposing a hierarchy of transport equations.

\subsection{Notation}
According to \cite{HitrikLascarSjostrandZerzeri}, in this work we shall say that, for $s\geq 1$ a given function $g$, depending on a small parameter $h>0$, is a $\G^s$-small remainder, and we note $g  = \cO_{\G^s}(h^{\infty})$, if there exists $C>0$ such that for all $\beta \in \N^m$,  
\begin{equation}
\vert   \partial^{\beta} g \vert \leq C^{1 + \vert \beta \vert } \beta!^s \exp \left( - \frac{h^{-\frac{1}{s}}}{C}  \right).
\label{eq:Gevrey small}
\end{equation} As usual, we use $\mathscr{S}(\R^n)$ to denote the space of Schwartz functions in $\R^n$.

\section{Stationary phase and symbolic calculus in the Gevrey setting}  
\label{sec:Stationary}

In this section we review the some results concerning the symbolic calculus in with Gevrey pseudodifferential operators, as some of them are difficult to find in the literature. We first review the non-stationary and stationary lemmas for Gevrey phases and symbols. As usual, this allows to give a sense to the composition of pseudodifferential operators and yield suitable asymptotic expansions for the resulting symbols. Of course, all the results reviewed below are classical in the $\mathscr{C}^{\infty}$ framework, according to classical references as \cite{Hormander,LernerBook,Eskin,Zworski}, but in the Gevrey context we need to justify that all small remainders have the particular form given in (\ref{eq:Gevrey small}), which sometimes necessitates some modifications with respect to the smooth or analytic cases.

\subsection{Non-stationary and Stationary phase method in the Gevrey setting}

We consider a symbol $a = a(x,y;h)$ in $\cS^{m_0}_s(\R^n \times \R^m)$, $m_0 \in \R$, for a small parameter $0 < h \leq 1$. Let $f = f(x,y)$ be a phase function of class $\G^s(\R^n \times \R^m)$. Consider 
\begin{equation}
\cI_f (h,y) = \int_{\R^n} e^{\frac{i}{h} f(x,y)} a(x,y;h) \ud x,   \qquad y \in \R^m,
\label{eq:stationary phase}
\end{equation} for a compactly-supported (in $x$) symbol $a$. The goal of this section is to adapt the usual non-stationary and stationary phase asymptotics to the Gevrey setting.

\subsubsection{Non-stationary phase lemma}

The following result is a non-stationary phase lemma adapted to the Gevrey asymptotics in $h$, which has an independent interest.

\begin{lem}[Non-stationary phase] Let $a = a(x,y;h)$ in $\cS^{m_0}_s(\R^n \times \R^m)$ be compactly supported and let $\supp a$ be its support. Assume $f \in \cG^s(\R^n \times \R^m)$ is such that 
\begin{enumerate}
\item $\Im f(x,y) \geq 0$,   
\item $f_x'(x,y) \not = 0$ for every $(x,y) \in \supp a$.
\end{enumerate} Then, $\cI_f (h, \cdot)$ is a $\G^s$-small remainder, i.e., there exists $C>0$ such that for all $\beta \in \N^m$,  
\begin{equation*}
\vert   \partial_y^{\beta} \cI_f (h, \cdot) \vert \leq C^{1 + \vert \beta \vert } \beta!^s \exp \left( - \frac{h^{-\frac{1}{s}}}{C}  \right).
\end{equation*}
\label{prop: Proof Non-stationary Phase}
\end{lem}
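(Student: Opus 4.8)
The plan is to exploit the non-vanishing of $f_x'$ on the (compact) support of $a$ to construct a first-order differential operator $L$ whose transpose kills the exponential, apply it repeatedly, and then carefully track the Gevrey-type growth of the resulting symbols to extract the subexponential factor $\exp(-h^{-1/s}/C)$ rather than just $\cO(h^\infty)$.

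First I would introduce the vector field
\begin{equation*}
L = \frac{h}{i}\,\frac{\overline{f_x'(x,y)}}{|f_x'(x,y)|^2}\cdot \partial_x,
\end{equation*}
which satisfies $L\,e^{\frac{i}{h}f} = e^{\frac{i}{h}f}$ on a neighbourhood of $\supp a$ where $f_x' \neq 0$ (using $\Im f \geq 0$ only at the end, to bound $|e^{\frac i h f}|\le 1$). Writing $L^t$ for the formal transpose, one has $\cI_f(h,y) = \int e^{\frac i h f}\,(L^t)^N a\,\ud x$ for every $N$. The coefficient $g(x,y) = \overline{f_x'}/|f_x'|^2$ is $\cG^s$ on $\supp a$ (quotient and product of $\cG^s$ functions with non-vanishing denominator — this is where a Gevrey division/inverse lemma is needed, and I would either cite the symbolic calculus section or give the standard Faà di Bruno estimate). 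Each application of $L^t$ produces a factor of $h$ together with one derivative landing on either $g$ or on $a$; after $N$ steps one gets a sum of $\le C^N$ terms, each of the form $h^N$ times a product of at most $N$ derivatives of $g$ and $a$ of total order $\le N$ (in $x$, with $\beta$ extra $y$-derivatives when we differentiate $\cI_f$ in $y$ afterwards).

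The key quantitative step is the bound on $\partial_y^\beta (L^t)^N a$. Using the Gevrey estimates $|\partial^\gamma g|, |\partial^\gamma a| \le C^{|\gamma|+1}\gamma!^s$ on $\supp a$ and the combinatorics of distributing $N$ derivatives (Leibniz), one obtains
\begin{equation*}
\bigl|\partial_y^\beta (L^t)^N a\bigr| \le C_1^{N+|\beta|+1}\,N!^s\,\beta!^s\,h^N\quad\text{on }\supp a,
\end{equation*}
so that, integrating over the (fixed-volume) support,
\begin{equation*}
\bigl|\partial_y^\beta \cI_f(h,y)\bigr| \le C_2^{|\beta|+1}\beta!^s\; \inf_{N\in\N}\; C_2^{N}\,N!^s\,h^N.
\end{equation*}
Finally I would optimise in $N$: $N!^s \approx (N/e)^{sN}$, so $C_2^N N!^s h^N$ is roughly $(C_3 N^s h)^N$, minimised near $N \sim c\,h^{-1/s}$, giving a bound $\exp(-h^{-1/s}/C)$ for a suitable $C$, which is exactly the form \eqref{eq:Gevrey small}. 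This yields the stated estimate after renaming constants.

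The main obstacle is the second step: getting the uniform constant $C_1$ in the bound for $\partial_y^\beta(L^t)^N a$, because naively the number of Leibniz terms and the Faà di Bruno factors coming from differentiating $g = \overline{f_x'}/|f_x'|^2$ could degrade the Gevrey constant with $N$. The resolution is the standard but slightly delicate observation that the class $\cG^s$ (with the uniform-in-multi-index constant, as in \eqref{eq:Gevrey symbol}) is stable under products, composition, and reciprocals-of-non-vanishing-functions with a controlled constant, together with the combinatorial identity $\sum \binom{N}{k}$-type sums being absorbed into $C_1^N$; I expect to invoke the Gevrey symbolic-calculus estimates (in the spirit of Proposition~\ref{lemma:composition}) rather than rederive them here. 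Once that uniformity is in hand, the optimisation in $N$ is routine.
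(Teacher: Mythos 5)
Your argument is correct in outline but takes a genuinely different route from the paper. The paper localizes with a Gevrey partition of unity and straightens the phase by the change of variables $\tilde x_1=f(x,y)$, $\tilde x'=x'$, $\tilde y=y$, so that $\cI_f$ becomes essentially the Fourier transform at frequency $1/h$ of a compactly supported Gevrey amplitude in $\tilde x_1$, and the bound \eqref{eq:Gevrey small} then follows from the Paley--Wiener-type decay $\exp(-h^{-1/s}/C)$ of such transforms. You instead run the classical iterated integration by parts with $L=\frac{h}{i}\,\frac{\overline{f_x'}}{|f_x'|^2}\cdot\partial_x$ and optimize over $N\sim h^{-1/s}$. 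Both are legitimate; your route avoids the change of variables (which is delicate here since $f$ is complex-valued, with only $\Im f\ge 0$), at the price of a combinatorial estimate on $(L^t)^N a$, while the paper transfers all the work to the one-dimensional Fourier decay of compactly supported $\cG^s$ functions. Your bookkeeping remarks are fine: the factors $h^{-|\beta'|}$ produced when $\partial_y^\beta$ hits the phase, and the $h^{-m_0}$ from the symbol class, are absorbed by degrading the constant in the exponential, since $\sup_{h}h^{-k}e^{-\delta h^{-1/s}}\le C^{k}k!^{s}$.

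The one place where your write-up is thinner than it needs to be is exactly the step you flag: the uniform bound $|\partial_y^\beta(L^t)^N a|\le C^{N+|\beta|+1}N!^s\beta!^s h^N$. First, your count of ``at most $C^N$ terms'' is not right: the fully expanded Leibniz sum has on the order of $N!$ terms (each application of $L^t$ multiplies the number of factors, hence of Leibniz terms, by roughly the current number of factors), and bounding each term by $C^{N}N!^{s}h^{N}$ and multiplying by the count would only yield $N!^{\,s+1}h^{N}$, hence $\exp(-h^{-1/(s+1)}/C)$ after optimization, which is weaker than \eqref{eq:Gevrey small}. Second, Proposition \ref{lemma:composition} concerns the composition of two symbols and does not by itself control $N$-fold iterates of a first-order operator with $\cG^s$ coefficients, so the citation you propose does not close the gap. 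The estimate you need is nevertheless true: it can be obtained either with the quasinorm machinery of (\ref{eq:quasinorms}) together with the $T\mapsto T(1+\eps)^s$, $\eps\sim\delta/N$ telescoping device used in the proof of Lemma \ref{lem:transport recursif}, or by noting that the weighted sum $\sum_{\mathrm{terms}}\prod_i\gamma_i!^{\,s}$ reduces, via $\sum_t w_t^{\,s}\le\bigl(\sum_t w_t\bigr)^{s}$ for $s\ge 1$, to the analytic case $s=1$, where nested Cauchy-type estimates give $C^N N!$. With that lemma supplied, your optimization in $N$ gives exactly the stated estimate.
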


\begin{proof} As usual, as $\supp a$ is compact, upon using a suitable partition of the unity, we may reduce the result to a purely local situation. Thanks to the hypothesis on $f$, we may assume that $\supp a$ is a sufficiently small neighbourhood of a point around which it is possible to use a diffeomorphism $\kappa$ straightening the phase $f$. Indeed, if $\supp a$ is sufficiently small, let $(\tilde{x},\tilde{y}) = \kappa (x,y)$ with $\tilde{x} = (\tilde{x}_1, \tilde{x}')$ and let $\kappa$ be such that 
 \begin{equation*}
 \tilde{x}_1 = f(x,y), \qquad \tilde{x}' = x', \qquad \tilde{y} = y, \qquad \qquad \textrm{in }\supp a.
 \end{equation*} Observe that, as $\Im f(x,y) \geq 0$ and $f_{x_1}(x,y) \not = 0$, this mapping is indeed a local diffeomorphism. Upon changing variables, we find  
\begin{equation*}
 \cI_f (h,y) = \int_{\R^n} e^{\frac{i}{h} f(x,y)} a(x,y;h) \ud x = \int_{\R^n} e^{\frac{i}{h} \tilde{x}_1} \tilde{a}(\tilde{x}_1,\tilde{x}',y;h) \ud \tilde{x}_1 \ud \tilde{x}'. 
\end{equation*} Now, taking derivatives of any order with respect to $y$, the usual non-stationary phase lemma gives exponential decay on $h$ and the result follows (cf. for instance \cite[Lemma 3.14]{Zworski}).

\end{proof}

\subsubsection{Stationary phase lemma}
\label{sec: Stationary Phase}

We turn now to the stationary phase asymptotics in the Gevrey setting. Upon using a suitable partition of unity, we may consider a purely local situation. Let us pick a point $(x_0,y_0) \in \R^n \times \R^m$ such that the following  
\begin{align}
f \textrm{ is real-valued}, \label{eq:phase real} \\
f_x'(x_0,y_0) = 0, \label{eq:phase stationnaire} \\
\det \left(   f_{xx}''(x_0,y_0) \right) \not = 0, \label{eq:phase stationnaire non degeneree}
\end{align} hold. Consider $(x,y)$ sufficiently close to $(x_0,y_0)$ in $\R^n\times \R^m$. Then, there exists a function $y \mapsto \chi(y)$ of class $\G^s$ in a neighbourhood of $y_0$ such that 
\begin{equation}
f_x'(\chi(y),y) = 0, \qquad \chi(y_0) = x_0. 
\label{eq:local picture}
\end{equation} We have the following stationary phase asymptotics for locally defined symbols that will be used in Proposition \ref{lemma:composition}.

\begin{lem}[Stationary phase] Assume that $\supp a$ is close to $x_0$ so that (\ref{eq:local picture}), (\ref{eq:phase real}), (\ref{eq:phase stationnaire}) and (\ref{eq:phase stationnaire non degeneree}) hold.  Then, the symbol
\begin{equation}
b(y,h) =  e^{-(i/h)f(\chi(y),y)}  \int a(x,y;h)   e^{(i/h)f(x,y)}  \ud x
\end{equation} is of class $\cG^s$ near $y_0$ and enjoys the asymptotic expansion
\begin{equation}
b(y,h) \sim \left\vert   \det \left(  \frac{1}{2 i \pi h}  f_{xx}''(\chi(y),y) \right) \right\vert^{-\frac{1}{2}} e^{i \pi \frac{\sigma}{4}   } \sum_{j\geq 0} h^j L_{f,j,y}[a] (\chi(y),y), 
\label{eq:phase stationaire}
\end{equation} where each $L_{f,j,y}[a]$ is a differential operator of degree $2j$ in $x$ and $\sigma$ is the signature of the matrix $D^2_x f (x_0,y_0)$.
\end{lem}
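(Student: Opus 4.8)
The plan is to reduce the integral to a standard Gaussian and then apply an exact stationary-phase identity (e.g.\ \cite[Theorem 3.13]{Zworski}), keeping careful track of constants so that the error terms fit the Gevrey profile \eqref{eq:Gevrey small}. First I would localize: using \eqref{eq:local picture} I change variables to center the critical point at the origin, writing $x = \chi(y) + z$, so that the phase becomes $f(\chi(y)+z,y) - f(\chi(y),y) = \tfrac12 \langle Q(y) z, z\rangle + O(|z|^3)$ with $Q(y) = f_{xx}''(\chi(y),y)$ invertible by \eqref{eq:phase stationnaire non degeneree} (shrinking $\supp a$ if necessary). By the Morse lemma with parameters, carried out in the $\cG^s$ category, there is a $\cG^s$ change of variables $z = \Phi(w,y)$, $\Phi(0,y)=0$, with $\cG^s$-dependence on $y$ near $y_0$, such that $f(\chi(y)+\Phi(w,y),y) - f(\chi(y),y) = \tfrac12 \langle Q(y) w, w\rangle$; the Jacobian $J(w,y) = |\det \partial_w \Phi(w,y)|$ is $\cG^s$ and $J(0,y) = 1$. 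After this substitution,
\begin{equation*}
b(y,h) = \int e^{\frac{i}{2h}\langle Q(y) w,w\rangle}\, \tilde a(w,y;h)\,\ud w, \qquad \tilde a(w,y;h) = a(\chi(y)+\Phi(w,y),y;h)\, J(w,y),
\end{equation*}
with $\tilde a \in \cS^0_s$ compactly supported in $w$, depending $\cG^s$ on $y$.

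Next I would apply the exact stationary-phase expansion for a nondegenerate quadratic form: for each $N$,
\begin{equation*}
b(y,h) = \left| \det\!\left( \frac{Q(y)}{2 i\pi h}\right)\right|^{-\frac12} e^{i\pi \sigma/4} \sum_{j=0}^{N-1} \frac{h^j}{j!}\left( \frac{\langle Q(y)^{-1} D_w, D_w\rangle}{2i} \right)^{\!j} \tilde a(0,y;h) + R_N(y,h),
\end{equation*}
which identifies $L_{f,j,y}[a](\chi(y),y)$ as the stated differential operator of order $2j$ acting on $\tilde a$ at $w=0$ (pulled back to $x=\chi(y)$), and $\sigma = \sgn Q(y) = \sgn Q(y_0)$ is locally constant. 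The classical remainder bound gives $|R_N(y,h)| \le C\, h^N \sup_{|\gamma|\le 2N + n + 1} \|\partial_w^\gamma \tilde a\|_{L^\infty}$; because $\tilde a \in \cS^0_s$, these derivatives are bounded by $C_1^{2N+1}(2N)!^s$, so $|R_N(y,h)| \le C_2^{N+1} (N!)^{2s} h^N$ (using $(2N)!^s \le 4^{Ns} N!^{2s}$, absorbed into $C_2$). One then optimizes in $N$: choosing $N \sim (h^{-1/s}/(eC_2^{2}))$, Stirling's formula gives $C_2^{N} N!^{2s} h^N \le \exp(-h^{-1/s}/C_3)$, which is exactly the subexponential decay required for $\cG^s$-smallness. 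Differentiating $b$ in $y$ up to any order $\beta$ and repeating this optimization — the $y$-derivatives of $\tilde a$, $Q$, $Q^{-1}$, and the prefactor all satisfy $\cG^s$ bounds of the form $C^{|\beta|+1}\beta!^s$ by the chain/Leibniz/Faà di Bruno rules for the class $\cG^s$ — yields the estimate $|\partial_y^\beta R_N| \le C^{1+|\beta|}\beta!^s \exp(-h^{-1/s}/C)$, establishing both that $b \in \cG^s$ near $y_0$ and the asymptotic expansion \eqref{eq:phase stationaire} in the $\cG^s$-sense.

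The main obstacle is the Gevrey bookkeeping in two places: first, carrying out the Morse lemma in the $\cG^s$ category so that $\Phi$, $J$, and their $y$-derivatives obey uniform Gevrey estimates (this requires a Gevrey implicit/inverse function theorem, which one should cite or prove — the stability of $\cG^s$ under composition and inversion is standard but the constants must be tracked); and second, the combinatorial estimate showing that the factorially-divergent coefficients $(2N)!^s$ from differentiating the $\cS^0_s$ symbol, once balanced against $h^N$ and optimized in $N$, produce precisely the $\exp(-h^{-1/s}/C)$ profile of \eqref{eq:Gevrey small} rather than a weaker decay — and crucially that this survives differentiation in $y$ with the right $\beta!^s$ growth. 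The remaining steps (the exact quadratic stationary-phase identity, the classical remainder estimate, local constancy of the signature) are standard and I would invoke them by reference.
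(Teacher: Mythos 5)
Your overall route is the same as the paper's: reduce to an exactly quadratic phase by a Morse-type change of variables with $\cG^s$ dependence on the parameter $y$ (the paper does this with the integral Taylor matrix $Q(x,y)$ and a factorization $Q_0=R^tQR$, so the quadratic form is the frozen matrix $Q_0=f''_{xx}(x_0,y_0)$ rather than your $Q(y)$ — an immaterial difference), and then invoke the quadratic stationary-phase expansion (the paper via Plancherel and H\"ormander [7.6.7], you via the finite expansion with remainder). Up to that point your plan is faithful to the paper's proof and in fact more explicit about the Gevrey bookkeeping, which the paper leaves to the citation.

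There is, however, a genuine quantitative error in the step you yourself single out as the crux. With $\tilde a\in\cS^0_s$ the $N$-th remainder is of size $C^{N+1}h^N\sup_{|\gamma|\le 2N+n+1}|\partial^\gamma_w\tilde a|\lesssim C_2^{N+1}(N!)^{2s}h^N$ (or $(N!)^{2s-1}h^N$ if you keep the $1/N!$ gain coming from the $h^j/j!$ structure of the expansion), and minimizing $C^N (N!)^{2\sigma}h^N$ over $N$ gives $\exp\bigl(-h^{-1/(2\sigma)}/C\bigr)$, i.e.\ $\exp(-h^{-1/(2s)}/C)$ or at best $\exp(-h^{-1/(2s-1)}/C)$ — not the profile $\exp(-h^{-1/s}/C)$ of (\ref{eq:Gevrey small}). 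Your prescribed choice $N\sim h^{-1/s}$ in fact makes $C^N(N!)^{2s}h^N$ diverge (the exponent behaves like $+sN\log N$), so the claimed conclusion does not follow. This loss is not an artifact: it is exactly the ``Gevrey loss'' the paper records later, in Lemma \ref{lem:MelinSjostrandGevrey}, where the analogous remainders are only $\cO_{\cG^{2s-1}}(h^\infty)$ and the coefficient bounds carry $N!^{2s-1}$. So either weaken your claim to a Gevrey-$(2s-1)$ (or $2s$) remainder — which still yields the expansion in the sense of Gevrey formal symbols and suffices for the lemma as stated, since the terms $L_{f,j,y}[a]$ themselves grow like $j!^{2s-1}$ and cannot form a Gevrey-$s$ formal symbol in $j$ — or supply an argument (e.g.\ almost-holomorphic extension and contour deformation as in the paper's Section on FBI transforms) if you want a sharper rate; the crude $N$-optimization alone cannot deliver $\exp(-h^{-1/s}/C)$.
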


\begin{proof}
Since $f$ is real-valued, we can use Morse lemma. By Taylor expansion of order $2$ around $(x_0,y_0)$ we find 
\begin{equation*}
f(x,y) = f(\chi(y),y) + \frac{1}{2} \langle Q(x,y)(x - \chi(y)),   x - \chi(y)  \rangle,
\end{equation*} whenever $(x,y)$ is close to $(x_0,y_0)$, where 
\begin{equation*}
Q(x,y) =  2 \int_0^1 (1 - t) f_{xx}'' \left(  \chi(y) + t (x - \chi(y)), y  \right) \ud t.   
\end{equation*} For $(x,y)$ close to $(x_0,y_0)$, $Q(x,y)$ is symmetric and invertible. Setting 
\begin{equation*}
Q_0 = f_{xx}''(x_0,y_0), 
\end{equation*} we observe there is a map $(x,y) \mapsto R(x,y)$ such that 
\begin{equation*}
Q_0 = R(x,y)^t Q(x,y) R(x,y), \qquad \textrm{ with } R(x_0,y_0) = Id.
\end{equation*} Consider the substitution 
\begin{equation*}
(x,y) \mapsto (\tilde{x}(x,y),y),
\end{equation*} with 
\begin{equation*}
\tilde{x}(x,y) = R^{-1}(x,y) (x - \chi(y)), \qquad \textrm{near } (x_0,y_0).
\end{equation*} Write 
\begin{equation*}
b(y,w) = e^{- \frac{i}{h} f(\chi(y),y)} I(y,\omega) = \int e^{\frac{i}{2h}  \langle Q_0 \tilde{x}, \tilde{x} \rangle } \tilde{a}(\tilde{x},y,\omega) \ud \tilde{x}, 
\end{equation*} for a $\G^s$ symbol $\tilde{a}$ with $\tilde{x}-\supp$ close to zero. Next, using Plancherel's formula 
\begin{equation*}
b(y,w) = \vert \det( \frac{1}{2i \pi h} Q_0) \vert^{-\frac{1}{2}} e^{ - \frac{i h}{2}  \langle Q_0^{-1}D,D  \rangle} \tilde{a}(0,y).
\end{equation*} Now, the asymptotic expansion (\ref{eq:phase stationaire}) follows from \cite[7.6.7]{Hormander} as a consequence of the usual asymptotics for quadratic phases with nonnegative real part.

\end{proof}

\subsection{Symbolic calculus with Gevrey symbols}

The calculus of Gevrey PDO has been established previously. Let $Q = \Op(q)$, $A = \Op(a)$, $q \in \cS^{m_0}_s$, $a \in \cS^{m}_s$. The composition $Q \circ A = \Op(q \circ a)$ can be written 
\begin{equation}
(q \circ a) (x,\xi) = \frac{1}{(2\pi h)^n} \iint  e^{-\frac{i}{h}(x-y)\cdot (\xi-\eta) } q(x,\eta) a(y,\xi) \ud y \ud \eta.
\label{eq:symbole composition integrale}
\end{equation}

\begin{prop} Let $Q = \Op(q)$, $A = \Op(a)$, $q \in \cS^{m_0}_s$, $a \in \cS^{m}_s$ be given. Then, the composition $Q \circ A = \Op(q \circ a)$ satisfies, for arbitrary $N \in \N$,
\begin{equation}
(q \circ a)(x,\xi) = \sum_{\vert \alpha  \vert < N} \frac{h^{ \vert  \alpha \vert }}{\alpha !} D_{\xi}^{\alpha}q(x,\xi)\partial_x^{\alpha} a(x, \xi) + r_N(q,a)(x,\xi),
\label{eq:Taylor}
\end{equation} where $r_N(q,a)$ is a $\cG^s$ symbol of order $m_0 + m - N$ given by 
\begin{equation*}
r_N(q,a)(x,\xi) = \frac{h^N}{i(2\pi h)^n}\frac{1}{i^N} \sum_{|\alpha| = N} \int_0^1 \frac{(1-\theta)^{(N-1)}}{(N-1)!}\iint_{\RR^{2n}} e^{\frac{y\cdot \eta}{ih}}(\partial_{\xi}^{\alpha} q)(x,\xi + \eta) \partial_{x}^{\alpha} a(x+\theta y,\xi) \ud \theta \ud y \ud \eta. 
\end{equation*}
\label{lemma:composition}
\end{prop}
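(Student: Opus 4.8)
The plan is first to obtain the expansion \eqref{eq:Taylor} together with the displayed formula for $r_N(q,a)$ by the usual manipulation of the composition integral --- this part is identical to the $\mathcal{C}^\infty$ calculus --- and then to devote the rest of the argument to the genuinely new point, namely that $r_N(q,a)$ again belongs to the precise class $\cS^{m_0+m-N}_s$, with a constant uniform in the multi-indices. Starting from \eqref{eq:symbole composition integrale}, the changes of variable $y\mapsto x+y$ and $\eta\mapsto\xi+\eta$ bring the composed symbol to the model form
\[
(q\circ a)(x,\xi)=\frac{1}{(2\pi h)^n}\iint_{\R^{2n}} e^{-\frac{i}{h}y\cdot\eta}\, q(x,\xi+\eta)\, a(x+y,\xi)\,\ud y\, \ud\eta ,
\]
understood as an oscillatory integral. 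Taylor expanding $y\mapsto a(x+y,\xi)$ at $y=0$ to order $N$ with integral remainder, rewriting each monomial $y^\alpha$ as $(ih)^{|\alpha|}\partial_\eta^\alpha$ acting on the exponential and integrating by parts in $\eta$, the polynomial part collapses --- via the semiclassical Fourier inversion $\frac{1}{(2\pi h)^n}\iint e^{-\frac{i}{h}y\cdot\eta}g(\eta)\,\ud y\, \ud\eta=g(0)$ --- to $\sum_{|\alpha|<N}\frac{h^{|\alpha|}}{\alpha!}D_\xi^\alpha q\,\partial_x^\alpha a$, while the $|\alpha|=N$ remainder, after the same integration by parts (which no longer collapses because of the argument $x+\theta y$ in $a$), is exactly $r_N(q,a)$ up to the explicit combinatorial and phase constants. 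I would refer to \cite{Hormander,Zworski} for this computation.

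It then remains to estimate $r_N(q,a)$ and its derivatives. The oscillatory integral is not absolutely convergent, so I would regularize it by the standard device of inserting powers of the two operators $\langle y\rangle^{-2L}(1-h^2\Delta_\eta)^L$ and $\langle\eta/h\rangle^{-2M}(1-\Delta_y)^M$ (for fixed $2L,2M>n$), each of which leaves $e^{-\frac{i}{h}y\cdot\eta}$ invariant, and integrating by parts, first in $\eta$ and then in $y$. This transfers the differential operators onto the amplitude $(\partial_\xi^\alpha q)(x,\xi+\eta)\,\partial_x^\alpha a(x+\theta y,\xi)$ and onto the weight $\langle y\rangle^{-2L}$: the $\partial_\eta$-derivatives produce higher $\xi$-derivatives of $q$ together with nonnegative powers of $h$, the $\partial_y$-derivatives produce higher $x$-derivatives of $a$ (carrying factors $\theta^{2j}\le1$) and harmless derivatives of $\langle y\rangle^{-2L}$. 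By the \emph{global} bounds \eqref{eq:Gevrey symbol} on $q$ and $a$, all these terms are bounded uniformly in $(x,\xi,\eta,y,\theta)$ --- the translated arguments $\xi+\eta$ and $x+\theta y$ costing nothing --- so the weights $\langle\eta/h\rangle^{-2M}\langle y\rangle^{-2L}$ render the integrand absolutely integrable, with $\int_{\R^n}\langle\eta/h\rangle^{-2M}\,\ud\eta=c_M h^n$ and $\int_{\R^n}\langle y\rangle^{-2L}\,\ud y=c_L$. Combined with the prefactor $h^N(2\pi h)^{-n}$, the powers $h^{-m_0}$, $h^{-m}$ from \eqref{eq:Gevrey symbol} and $h\le1$, this already gives $|r_N(q,a)(x,\xi)|\le C_N\,h^{-(m_0+m-N)}$, i.e. the correct order.

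For the Gevrey derivative bounds I would use that $r_N(q,a)$ depends on $(x,\xi)$ only through $q(x,\xi+\eta)$ and $a(x+\theta y,\xi)$, the regularizing weights being $(x,\xi)$-independent, so that $\partial_x^\mu\partial_\xi^\nu r_N(q,a)$ is obtained by differentiating under the now absolutely convergent integral and applying Leibniz to the product of those two factors only. Folding in the at most $2L$, resp. $2M$, extra derivatives coming from $(1-h^2\Delta_\eta)^L$, resp. $(1-\Delta_y)^M$, one is left with a finite sum of terms $\binom{\mu}{\mu'}\binom{\nu}{\nu'}(\partial_x^{\mu'+\alpha+\rho}\partial_\xi^{\nu'}q)(x,\xi+\eta)\,(\partial_x^{\mu-\mu'+\alpha+\sigma}\partial_\xi^{\nu-\nu'}a)(x+\theta y,\xi)$ with $|\rho|\le2L$, $|\sigma|\le2M$. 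Inserting \eqref{eq:Gevrey symbol} and combining factorials through $\binom{\mu}{\mu'}\le2^{|\mu|}$, $(\kappa+\lambda)!\le2^{|\kappa+\lambda|}\kappa!\,\lambda!$ and $\kappa'!\,(\kappa-\kappa')!\le\kappa!$, the products $(\mu'!)^s((\mu-\mu')!)^s$ telescope to $(\mu!)^s$ and likewise in $\nu$, whereas every factor depending on $\alpha$ and $N$ --- the multinomial sum $\sum_{|\alpha|=N}\tfrac{N!}{\alpha!}\le n^N$, the factor $\int_0^1(1-\theta)^{N-1}\,\ud\theta=1/N$, the $1/(N-1)!$ and the constants from $2L,2M$ --- is absorbed into a (possibly large, but finite) constant depending only on $N$, $n$, $s$ and the symbol seminorms of $q$ and $a$, and \emph{not} on $\mu,\nu$. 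The $(\eta,y)$-integrals converge as above and contribute the same power of $h$, giving $|\partial_x^\mu\partial_\xi^\nu r_N(q,a)|\le C_N^{1+|\mu|+|\nu|}\mu!^s\,\nu!^s\, h^{-(m_0+m-N)}$, i.e. $r_N(q,a)\in\cS^{m_0+m-N}_s$.

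The main obstacle is precisely this last step: in the $\mathcal{C}^\infty$ calculus one is content with a constant depending on $(\mu,\nu)$, whereas membership in $\cS_s$ demands that the symbol constant be uniform in the multi-indices, which forces one to keep track of every factorial through the Leibniz rule and through the sum over $|\alpha|=N$, and to use $s\ge1$ to absorb the residual combinatorial growth; the uniformity-in-derivatives built into \eqref{eq:Gevrey symbol} for $q$ and $a$ is at the same time what makes this possible and what makes the shifted arguments $\xi+\eta$ and $x+\theta y$ harmless. Convergence of the oscillatory integrals and the legitimacy of differentiating under the integral sign are handled exactly as in the smooth theory.
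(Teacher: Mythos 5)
Your proposal is correct, and its first half is essentially the paper's computation in different clothing: Taylor-expanding the amplitude $a(x+y,\xi)$ in $y$, converting $y^\alpha$ into $h$-derivatives of the exponential and integrating by parts is the same algebra as the paper's route through Lemma \ref{lem:Jt}, where one instead Taylor-expands the operator $J^t$ in $t$ following Lerner; both yield \eqref{eq:Taylor} with the same integral remainder (up to the multinomial factor $N!/\alpha!$, which the paper's proof carries but its statement of $r_N$ drops --- your ``up to explicit combinatorial constants'' is the accurate reading). Where you genuinely diverge is the key point, namely that $r_N(q,a)\in\cS^{m_0+m-N}_s$. The paper disposes of this in one line by invoking the Gevrey stationary phase lemma of Section \ref{sec:Stationary}; but that lemma is stated for compactly supported amplitudes, whereas here the amplitude $(\partial_\xi^\alpha q)(x,\xi+\eta)\,\partial_x^\alpha a(x+\theta y,\xi)$ lives globally in $(y,\eta)$, so some regularization is needed anyway before it applies. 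Your argument supplies exactly that: inserting the weights $\langle y\rangle^{-2L}(1-h^2\Delta_\eta)^L$ and $\langle\eta/h\rangle^{-2M}(1-\Delta_y)^M$, integrating by parts, exploiting the global uniformity in \eqref{eq:Gevrey symbol} (which makes the shifted arguments $\xi+\eta$, $x+\theta y$ costless), and then tracking the Leibniz factorials so that $\mu'!^s(\mu-\mu')!^s\le\mu!^s$ and the binomial sums are absorbed into $C^{|\mu|+|\nu|}$, with all $N$-, $L$-, $M$-dependent factors pushed into a constant independent of $\mu,\nu$. This is more elementary and more self-contained than the paper's appeal to stationary phase, at the price of not producing the full asymptotic expansion of the remainder (which is not needed here: the paper only uses the case $N=1$, and no uniformity in $N$ is claimed in the statement). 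The power counting $h^N(2\pi h)^{-n}\cdot h^n\cdot h^{-m_0-m}=h^{-(m_0+m-N)}$ and the legitimacy of differentiating under the regularized, absolutely convergent integral are as you say, so the proof goes through.
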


We shall use (\ref{eq:Taylor}) with $N=1$ in the following sections. The proof relies on the following Lemma, which folllows the lines in \cite[Lemmas 4.1.2 and 4.1.5]{LernerBook}.

\begin{lem}
For every $t \in \RR^*$, let $J^t$ be the operator defined by 
\begin{equation*}
J^t b(x,\xi) = |t|^{-n} \iint_{\RR^n \times \RR^n} b(x + z,\xi + \zeta) e^{\frac{z\cdot \xi}{it}}  \frac{\ud z \ud \zeta}{(2\pi)^n}, \qquad b \in \cS^m_s(\RR^n \times \RR^n).
\end{equation*} Then, $J^t$ maps $\cS_s^{m'}(\RR^n \times \RR^n)$ into itself. Moreover, for any $N \in \N$, 
\begin{equation*}
(J^t b)(x,\xi) = \sum_{\vert \alpha \vert < N} \frac{t^{\vert \alpha \vert  }}{\alpha!} D_{\xi}^{\alpha} \partial_x^{\alpha} b(x,\xi)  + r_N(t)(x,\xi) 
\end{equation*} with $r_N(t) \in \cS^{m-N}_s$ and
\begin{equation*}
r_N(t)(x,\xi) = t^N \int_0^1 \frac{(1 - \theta)^{N-1}}{(N-1)!} J^{\theta t} \left(  (D_{\xi} \cdot \partial_x)^N b \right)(x,\xi) \ud \theta. 
\end{equation*}
\label{lem:Jt}
\end{lem}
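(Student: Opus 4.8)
The plan is to mimic the classical argument for the oscillatory integral $J^t$ (as in \cite[Lemmas 4.1.2 and 4.1.5]{LernerBook}) but to track carefully that the symbol estimates \eqref{eq:Gevrey symbol} are preserved \emph{with a uniform constant in the multi-indices}, which is the only novelty over the $\mathscr{C}^\infty$ statement. First I would make sense of the integral defining $J^t b$: it is not absolutely convergent, so I regularize it. Write $e^{z\cdot\zeta/(it)}$ and integrate by parts using $\langle \zeta\rangle^{-2k}(1-\tfrac{t^2}{i}\Delta_z)^k$ acting on the phase, and symmetrically $\langle z\rangle^{-2k}(1-\tfrac{1}{i t}\Delta_\zeta)^k$, to gain decay in both $z$ and $\zeta$; then insert a cutoff $\chi(\epsilon z,\epsilon\zeta)$ and pass to the limit $\epsilon\to 0$. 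This shows $J^tb$ is well defined and that $J^t$ maps $\cS^{m'}_s$ into itself: differentiating $J^tb$ in $(x,\xi)$ just moves derivatives onto $b$ inside the integral, so one needs the integral $\iint |\partial^\gamma_x\partial^\delta_\xi b(x+z,\xi+\zeta)|\,(\text{regularizing factors})\,\ud z\,\ud\zeta$ to be bounded by $C^{1+|\gamma|+|\delta|}\gamma!^s\delta!^s$ times the appropriate power of $h$. Using the symbol bound on $b$ at the shifted point together with the fixed integrable weights in $z,\zeta$ keeps the constant uniform because the $z,\zeta$-weights do not interact with $\gamma,\delta$.

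Next I would prove the Taylor expansion. Expand $b(x+z,\xi+\zeta)$ only in the $\zeta$ variable via Taylor's formula with integral remainder at $\zeta=0$: $b(x+z,\xi+\zeta)=\sum_{|\alpha|<N}\frac{\zeta^\alpha}{\alpha!}\partial_\xi^\alpha b(x+z,\xi)+N\sum_{|\alpha|=N}\frac{\zeta^\alpha}{\alpha!}\int_0^1(1-\theta)^{N-1}\partial_\xi^\alpha b(x+z,\xi+\theta\zeta)\,\ud\theta$. Each monomial $\zeta^\alpha e^{z\cdot\zeta/(it)}$ is converted into a $z$-derivative: $\zeta^\alpha e^{z\cdot\zeta/(it)}=(it)^{|\alpha|}\partial_z^\alpha e^{z\cdot\zeta/(it)}$, so integrating by parts in $z$ throws $\partial_z^\alpha=\partial_x^\alpha$ onto $\partial_\xi^\alpha b(x+z,\xi)$, and then the oscillatory integral in $(z,\zeta)$ collapses by Fourier inversion (formally $\iint e^{z\cdot\zeta/(it)}g(x+z)\,\frac{\ud z\,\ud\zeta}{(2\pi |t|)^n}=g(x)$). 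For the principal terms this gives exactly $\sum_{|\alpha|<N}\frac{t^{|\alpha|}}{\alpha!}D_\xi^\alpha\partial_x^\alpha b(x,\xi)$ after bookkeeping the powers of $i$ and $t$; for the remainder term the same manipulation, \emph{not} collapsing the integral but keeping the $\theta$-parameter, produces $r_N(t)(x,\xi)=t^N\int_0^1\frac{(1-\theta)^{N-1}}{(N-1)!}J^{\theta t}\big((D_\xi\cdot\partial_x)^N b\big)(x,\xi)\,\ud\theta$, where I recognize the residual $(z,\zeta)$-integral with the extra $\theta$ in the shift as precisely $J^{\theta t}$ applied to $(D_\xi\cdot\partial_x)^N b$.

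Finally I would check that $r_N(t)\in\cS^{m-N}_s$: since $(D_\xi\cdot\partial_x)^N b\in\cS^{m}_s$ with, by the Gevrey bound, constant growing like $C^{1+2N}(N!)^{2s}$ or so, and $J^{\theta t}$ is bounded on this class uniformly for $\theta\in[0,1]$ by the first part, the factor $t^N$ combined with the gain of $N$ derivatives in $\xi$ gives order $m-N$; one must observe that the combinatorial constants $(N!)^{2s}$ from the $N$-fold derivative are absorbed into the $C^{1+\dots}\alpha!^s\beta!^s$ form, which is exactly where Gevrey-$s$ (as opposed to analytic) is needed and why the uniform-in-multi-index constant matters. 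The main obstacle is precisely this uniform bookkeeping: I must verify that none of the steps — the double integration by parts for convergence, the Taylor remainder, the $\theta$-integration — introduces constants depending on $\alpha,\beta$ in a way that breaks \eqref{eq:Gevrey symbol}; this amounts to checking that each integration by parts costs only a geometric factor per derivative and that the remaining $z,\zeta$-integrals converge with bounds independent of the differentiation order. Everything else is the classical computation from \cite[Lemmas 4.1.2 and 4.1.5]{LernerBook} transported verbatim, and Proposition~\ref{lemma:composition} then follows by applying the lemma to $b(x,\xi)=q(x,\xi)a(\cdot,\cdot)$ appropriately, i.e. by writing $q\circ a$ in the form \eqref{eq:symbole composition integrale} and recognizing it as $J^{-h}$ (up to the standard change of variables) acting on $q(x,\eta)a(y,\xi)$.
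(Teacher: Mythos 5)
Your argument is correct, and it reaches exactly the stated expansion, but it is implemented somewhat differently from the paper. The paper's proof is a two-line operator-level argument: it regards $J^t$ as the exponential $e^{itD_\xi\cdot\partial_x}$ (equivalently, uses the evolution/group property of $J^t$ in $t$, as in \cite[Lemmas 4.1.2 and 4.1.5]{LernerBook}) and Taylor-expands that exponential in $t$ with integral remainder, from which both the mapping property on $\cS^{m'}_s$ and the formula for $r_N(t)$ are read off. You instead work at the symbol level: Taylor expansion of $b(x+z,\xi+\zeta)$ in $\zeta$ at $\zeta=0$, conversion of $\zeta^\alpha$ into $z$-derivatives through the phase, integration by parts, exact Fourier inversion for the terms with $|\alpha|<N$, and the rescaling $\zeta\mapsto\theta\zeta$ to recognize the remainder as $J^{\theta t}\bigl((D_\xi\cdot\partial_x)^N b\bigr)$ — a computation that is equivalent but self-contained, and which has the merit of also supplying what the paper leaves implicit, namely the regularization/integration-by-parts argument making the oscillatory integral well defined and the verification that each step costs only a geometric factor per derivative, so the uniform-in-$(\alpha,\beta)$ constant in the Gevrey bound is preserved. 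Two cosmetic points: the phase in the statement should read $e^{\frac{z\cdot\zeta}{it}}$ (as you assumed), and in your final remark on Proposition \ref{lemma:composition} the operator appearing after the change of variables is $J^{h}$ (phase $e^{-\frac{i}{h}z\cdot\zeta}=e^{\frac{z\cdot\zeta}{ih}}$), not $J^{-h}$; also, as in the paper, the membership $r_N(t)\in\cS^{m-N}_s$ is to be read with $t$ of size $h$, the gain of order coming from the prefactor $t^N$.
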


\begin{proof}

Let $N \in \N$. By Taylor's expansion up to order $N$ in the exponential we have

\begin{equation*}
J^t = \sum_{k < N} \frac{t^k}{k!}(D_{\xi} \partial_x)^k  + \int_0^1 \frac{(1 - \theta)^{N-1}}{(N-1)!} J^{\theta t} \left(  (tD_{\xi} \partial_x)^N \right) \ud \theta. 
\end{equation*} Hence, $J^t$ maps $\cS_s^{m'}(\RR^n \times \RR^n)$ into itself and one has 
\begin{equation*}
(J^t b)(x,\xi) = \sum_{\vert \alpha \vert < N} \frac{t^{\vert \alpha \vert  }}{\alpha!}(D_{\xi}^{\alpha} \partial_x^{\alpha} b(x,\xi)  + r_N(t)(x,\xi) 
\end{equation*} with 
\begin{equation*}
r_N(t)(x,\xi) = t^N \int_0^1 \frac{(1 - \theta)^{N-1}}{(N-1)!} J^{\theta t} \left(  (tD_{\xi} \partial_x)^N b \right)(x,\xi) \ud \theta. 
\end{equation*}

\end{proof}

\begin{proof}[Proof of Proposition \ref{lemma:composition}]

From (\ref{eq:symbole composition integrale}), we may write 
\begin{equation*}
(q \circ a) (x,\xi) = \frac{1}{(2\pi h)^n} \iint  e^{-\frac{i}{h} z \cdot \zeta } q(x,\xi + \zeta) a(x + z,\xi) \ud z \ud \zeta,
\end{equation*} thanks to the changes of variables $x + z = y$ and $\xi - \eta = \zeta$. For $t \in \RR^*$ set
\begin{equation*}
J_0^t b = \frac{1}{(2 \pi \vert t\vert)^n} \iint b(z,\zeta) e^{\frac{1}{it}z\cdot \zeta} \ud z \ud \zeta, \qquad b = b(z,\zeta) \in \cS^{m'}_s(\RR^n \times \RR^n).
\end{equation*} Then, setting $C_{x,\xi} = q(x,\xi + \zeta) a(x+\zeta,\xi)$ one has
\begin{equation*}
(q \circ a) (x,\xi) = J_0^h C_{x,\xi}.
\end{equation*} So thanks to Lemma \ref{lem:Jt} we may write
\begin{equation*}
(q \circ a)(x,\xi) = J^h C_{x,\xi}(0,0) =  \sum_{\vert \alpha \vert < N} \frac{h^{\vert \alpha \vert  }}{\alpha!}D_{\xi}^{\alpha} q(x,\xi) \partial_x^{\alpha} a(x,\xi)  + r_N(h)(q,a)(x,\xi)
\end{equation*} where
\begin{equation*}
r_N(h)(q,a)(x,\xi) =  \int_0^1 \frac{(1 - \theta)^{N-1}}{(N-1)!} J^{\theta h} \left(  (tD_{\xi} \partial_x)^N C_{x,\xi}(0,0) \right)\ud \theta.
\end{equation*} Now, expanding the remainder
\begin{equation*}
r_N(q,a)(x,\xi) = \frac{1}{(2 \pi h)^n} \sum_{\vert \gamma \vert = N} \frac{h^N N!}{\gamma!} \int_0^1 \iint_{\RR^{2n}} \frac{(1-\theta)^{N-1}}{(N-1)!}e^{\frac{z\cdot \zeta}{ih}} D_{\xi}^{\gamma} q(x, \xi + \zeta) \partial_x^{\gamma}a(x + \theta z,\xi) \ud z \ud \zeta \ud \theta. \end{equation*} Using Gevrey stationary phase lemma one may prove that $r_N(q,a)$ is a Gevrey symbol of order $m_0 + m - N$ and thus, 
\begin{equation*}
(q \circ a)(x,\xi) = \sum_{\vert \alpha  \vert < N} \frac{h^{ \vert  \alpha \vert }}{\alpha !} D_{\xi}^{\alpha}q(x,\xi)\partial_x^{\alpha} a(x, \xi) + r_N(q,a)(x,\xi),
\end{equation*} which ends the proof.

\end{proof}

\subsection{Formal symbols and Borel lemma in the Gevrey setting}

In this section we introduce a notion of Gevrey formal symbol of a given order and give conditions ensuring the existence of a Gevrey symbol realizing a given sequence of formal symbols, based on Carleson's theorem \cite{Carleson}.

\begin{defn}[Formal symbols]
Let $m\in \R$. Let $(a_j)_{j\geq 0}$ be a sequence of symbols such that
\begin{equation*}
a_j \in \cS^{-j + m}_s(\RR^n \times \RR^n), \qquad \forall j \in \N.
\end{equation*} The sequence of $(a_j)_{j\geq 0}$ is said to be a formal $\cG^s$ symbol of degree $m$ if 
there exists $C>0$ such that  
\begin{equation}
\vert \partial_x^{\alpha} \partial_{\theta}^{\beta} a_j(x,\theta) \vert \leq C^{1 + \vert \alpha \vert + \vert \beta \vert + j } j!^s \alpha!^s \beta!^s h^{j-m}, \qquad \forall (x,\theta) \in \RR^n \times \RR^n,
\label{eq:condition symbole precisee}
\end{equation} for all $\alpha, \beta \in \N^n$ and $j\in \N$.
\label{eq:symbole formel}
\end{defn}

Carleson's theorem \cite{Carleson} allows us to realise Gevrey $s$ formal symbols into symbols. This result is a Gevrey Borel theorem as follows. 

\begin{thm} Let $m\in \R$ and let $(a_j)_{j\geq 0} \in \cS^{-j+m}$ be a formal $\cG^s$ symbol of degree $m$. Then, there exists $a \in \cS_s^m(\RR^n \times \RR^n)$ such that for any $N>0$, $\alpha, \beta \in \NN^n$ one has uniformly
\begin{equation*}
\vert \partial_x^{\alpha}  \partial_{\theta}^{\beta} ( a  - \sum_{j < N} a_j  )   \vert \leq C^{1 + \alpha + \beta + N} N!^s\alpha!^s \beta!^s h^{N-m}. 
\end{equation*}
\label{thm:Carleson}
\end{thm}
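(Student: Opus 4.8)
The plan is to construct the realizing symbol $a$ by the standard Borel-type summation, but with a carefully chosen sequence of cutoff scales depending on $j$, and then invoke Carleson's theorem on the $h$-variable in the precise form that controls the loss of constants. Concretely, I would write $a(x,\theta;h) = \sum_{j\geq 0} \chi(\lambda_j h) a_j(x,\theta;h)$, where $\chi \in \CIc(\RR)$ equals $1$ near $0$ and vanishes outside a fixed neighborhood of $0$, and $(\lambda_j)_{j\geq 0}$ is an increasing sequence of positive reals tending to $+\infty$ fast enough that the tail sums converge with the right Gevrey bounds. The point of inserting $\chi(\lambda_j h)$ rather than a fixed cutoff is that each term is supported in $\{h \leq c/\lambda_j\}$, so that on any fixed $h$ only finitely many terms are active and the formal series is never actually summed past its radius of convergence; the $h^j$ gain in \eqref{eq:condition symbole precisee} is what makes the series telescope.

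First I would estimate a single term: using \eqref{eq:condition symbole precisee} together with the Leibniz rule applied to $\partial_x^\alpha \partial_\theta^\beta[\chi(\lambda_j h) a_j]$ — noting that the cutoff depends only on $h$, not on $(x,\theta)$, so no extra derivatives fall on it — gives $|\partial_x^\alpha \partial_\theta^\beta(\chi(\lambda_j h) a_j)| \leq C^{1+|\alpha|+|\beta|+j} j!^s \alpha!^s \beta!^s h^{j-m}$, and on the support of $\chi(\lambda_j h)$ we have $h^j \leq (c/\lambda_j)^j$. Next I would sum over $j$: choosing $\lambda_j$ so that $C^j j!^s (c/\lambda_j)^j \leq 2^{-j}$ — which is possible since we only need $\lambda_j$ to grow like $j^s$ up to constants — yields convergence of $\sum_j \partial_x^\alpha\partial_\theta^\beta(\chi(\lambda_j h) a_j)$ uniformly, with the resulting bound of the form $C'^{1+|\alpha|+|\beta|} \alpha!^s \beta!^s h^{-m}$, i.e. $a \in \cS^m_s$. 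The key subtlety is that the constant controlling $\alpha!,\beta!$ must stay uniform in $j$; this is exactly why \eqref{eq:Gevrey symbol} and \eqref{eq:condition symbole precisee} demand a single constant $C$ raised to the power $1+|\alpha|+|\beta|(+j)$ rather than $\alpha$-dependent constants, and it is where the argument genuinely uses Carleson's theorem \cite{Carleson} rather than the classical Borel construction — Carleson's result is what guarantees the existence of a cutoff scheme reconciling the Gevrey growth in $j$ with a uniform symbol constant.

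Then I would estimate the remainder $a - \sum_{j<N} a_j = \sum_{j \geq N} \chi(\lambda_j h) a_j + \sum_{j < N}(\chi(\lambda_j h) - 1) a_j$. The first sum is a tail of the convergent series above and is bounded, after differentiation, by $\sum_{j\geq N} C^{1+|\alpha|+|\beta|+j} j!^s \alpha!^s\beta!^s (c/\lambda_j)^{j-m} \leq C'^{1+|\alpha|+|\beta|+N} N!^s \alpha!^s\beta!^s$, using that for $j \geq N$ one has $(c/\lambda_j)^j \cdot C^j j!^s \leq 2^{-j} N!^s / N!^s \cdot (\text{geometric factor})$ — more precisely one factors out the worst term $j=N$ and sums a geometric series. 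For the finite second sum, each term $(\chi(\lambda_j h)-1)a_j$ is supported in $\{h \geq c'/\lambda_j\}$, so $h^{-m}$ can be traded for powers of $\lambda_j$, and crucially $1 = h^{N-j} \cdot h^{j-N} \leq h^{j-N}(\lambda_j/c')^{N-j}$ on that support, which produces an $h^{N-m}$ factor at the cost of a constant power $\lambda_j^{N-j} \leq (C j^s)^{N-j}$; summing over $j<N$ and absorbing gives again a bound of the form $C^{1+|\alpha|+|\beta|+N} N!^s \alpha!^s \beta!^s h^{N-m}$, which is the claimed estimate.

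The main obstacle I anticipate is bookkeeping the constants uniformly: one must verify that a single choice of $(\lambda_j)$ and a single constant $C$ work simultaneously for the symbol estimate ($a \in \cS^m_s$), for all the remainder estimates indexed by $N$, and uniformly in $\alpha,\beta$ — this requires the sequence $\lambda_j \sim j^s$ to be pinned down first, independently of $N$, and then all estimates checked against it. A secondary technical point is ensuring the cutoff $\chi(\lambda_j h)$ does not spoil the Gevrey (in $h$) regularity of $a$ itself if such regularity is wanted; but since the statement as written only asks for the symbol bounds \eqref{eq:Gevrey symbol} and the asymptotic remainder control, and these involve only $x,\theta$ derivatives, the $h$-cutoffs are harmless. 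I would also remark that replacing $\chi(\lambda_j h)$ by $\chi(h^{1/s}\mu_j)$ for a suitable $\mu_j \to \infty$ gives the variant adapted to subexponential scales, consistent with the $\cO_{\cG^s}(h^\infty)$ notion in \eqref{eq:Gevrey small}, but this refinement is not needed for the present statement.
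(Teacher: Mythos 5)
Your construction is correct, but it follows a genuinely different route from the paper. You build $a=\sum_{j\geq 0}\chi(\lambda_j h)a_j$ directly, with $h$-dependent cutoffs at scales $\lambda_j\sim j^s$, so that on $\supp \chi(\lambda_j h)$ one has $h\lesssim \lambda_j^{-1}$ and the factor $C^j j!^s h^j$ from (\ref{eq:condition symbole precisee}) becomes summable; the tail $\sum_{j\geq N}$ is then geometric starting from the worst term $C^N N!^s h^{N-m}$, and the finite sum $\sum_{j<N}(1-\chi(\lambda_j h))a_j$ is handled on $\{h\geq c'/\lambda_j\}$ via $h^{j-N}\leq(\lambda_j/c')^{N-j}$ together with the elementary bound $j!^s(\lambda_j/c')^{N-j}\leq C^N N!^s$ (which you should spell out: it follows from $j!\,N^{N-j}\leq C^N j!\,(N-j)!\,\leq C^N N!$). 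This is essentially the classical Borel summation "up to $j\lesssim h^{-1/s}$" adapted to the Gevrey class, and it is self-contained and elementary. The paper instead invokes Carleson's universal moment theorem: it observes that $(a_j h^{-j} j!)_j$ is an $(s+1)$-sequence in the Boutet--Kree sense, produces $g(t,x,\theta,h)\in\cG^{s+1}(\overline{\RR}_+,\cS^m_s)$ with $\partial_t^j g(0,\cdot)=j!\,a_j h^{-j}$, sets $a=g|_{t=h}$, and reads off the remainder estimate from Taylor's formula in $t$ (the $N!^{s+1}$ bound on $\partial_t^N g$ divided by the Taylor $N!$ gives the stated $N!^s$). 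Your approach buys explicitness and makes the uniformity of the constant in $\alpha,\beta$ transparent; the paper's buys brevity given the Carleson/Boutet--Kree machinery. One inaccuracy in your write-up: your argument does \emph{not} "genuinely use Carleson's theorem" -- the cutoff scheme is constructed by hand and needs no moment-problem input -- so you should either drop that claim or acknowledge that your proof replaces Carleson's theorem rather than relying on it.
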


\begin{proof}
Consider the sequence $(a_j(x,\theta,h)h^{-j}j!)_{j\geq 0}$. This is a $(s+1)$-sequence in the Boutet-Kree sense, according to \cite{BoutetKree}. Using Carleson theorem we have that there exists $g(t,x,\theta, h) \in \cG^{s+1}(\overline{R}_+, \cS_s^m)$ such that for all $j\geq 0$
\begin{equation*}
\partial_t^j g(0,x,\eta,h) = j! a_j(x,\theta,h) h^{-j}.
\end{equation*} Setting $a(x,\theta,h) = g(t,x,\theta, h)\vert_{t = h > 0}$ for $h$ small, by Taylor's formula 
\begin{equation*}
\vert \partial_x^{\alpha}  \partial_{\theta}^{\beta} ( a  - \sum_{j < N} a_j  )   \vert \leq \sup_{\theta' \in [0,1]} \vert \vert   \partial_x^{\alpha}  \partial_{\theta}^{\beta} \partial_{t}^{j}  g(\theta' h, x,\theta, h)     \vert     \frac{h^{N}}{N!}. 
\end{equation*} As $g$ is a function in $\cG^{s+1}$ function w.r.t. $t$ and in $\cG^s$ w.r.t. $(x,\theta)$, one has for some $C'>0$
\begin{equation*}
\vert \partial_x^{\alpha}  \partial_{\theta}^{\beta} ( a  - \sum_{j < N} a_j  )   \vert \leq C'^{1 + \vert \alpha \vert + \vert \beta \vert +  }    N!^s \alpha!^s \beta!^s h^{N-m},
\end{equation*} so the theorem is proven.
\end{proof}

\subsection{Formal quasinorms in the class of Gevrey symbols}

In the set $\cS_s^m(\RR^n \times \RR^n)$ we may define, following \cite{LascarLascar}, a family of quasinorms (cf. \cite{BoutetKree}) of the form :
\begin{equation}
N_m(a,T)(x,\theta) = \sum_{(\alpha,\beta) \in \NN^d } \frac{h^m T^{\vert \alpha \vert + \vert \beta \vert}}{\alpha!^s\beta!^s} \vert  \partial_x^{\alpha} \partial_{\theta}^{\beta} a(x,\theta) \vert, 
\label{eq:quasinorms}
\end{equation} where $T>0$ is a fixed parameter. We also set 
\begin{equation*}
\overline{N_m}(a,T):= \sup_{(x,\theta)} N_m(a,T)(x,\theta).
\end{equation*} By Leibniz rule, one has  
\begin{equation*}
\overline{N}_{m+m'}(aa',T) \leq  \overline{N}_{m}(a,T) \overline{N}_{m'}(a',T),
\end{equation*} for any $a \in \cS_s^m(\RR^n \times \RR^n)$ and $a' \in \cS_s^{m'}(\RR^n \times \RR^n)$.

These quasinorms will be important in the proof of Theorem \ref{thm:Egorov1}.

\section{Fourier Integral Operators in the Gevrey setting}
\label{sec:FIO}

In this section we present some basic results concerning the specific class of Fourier Integral Operators (FIO for short) associated to Gevrey symbols and classes and how they relate to the class of Gevrey pdo.

\subsection{FIO with Grevey symbols and phases}

We work with a local class of Gevrey semiclassical Fourier integral operator defined in an analogous way to \cite[61.1]{Eskin} in the $h$-independent case and also to \cite{Zworski} for the $h$ dependent case in the smooth class. These are operators of the form 
\begin{equation}
Fu(x,h) = \frac{1}{(2\pi h)^n} \iint_{\RR^n \times \RR^n} e^{\frac{i}{h}(S(x,\eta) -y \cdot \eta)}a(x,\eta) u(y,h) \ud y \ud \eta, 
\label{eq:GevreyFIO}
\end{equation} where the phase function $S = S(x,\eta) \in \cG^s(\RR^n \times \RR^n)$ is such that $\det S''_{x,\eta} \not = 0$  and the symbol $a=a(x,\eta)$ is in some class $a \in \cS^m_s(\RR^n \times \RR^n)$. We will sometimes use the notation $F \in \cI^m_s(\RR^n)$ for such an operator.

For a FIO $F$ as above, define (cf. \cite[Section 61.1]{Eskin}) its adjoint $F^*$ by 
\begin{equation*}
\langle  F u, v  \rangle   =  \langle u, F^* v\rangle,
\end{equation*} so that
\begin{equation*}
F^*u(x,h) = \frac{1}{(2\pi h)^n} \iint_{\RR^n \times \RR^n} e^{\frac{i}{h}(S(y,\eta) - x \cdot \eta) )} \overline{a(y,\eta)} u(y,h) \ud y \ud \eta. 
\end{equation*} Then, we have a Gevrey version of \cite[Lemma 62.4]{Eskin}.

\begin{lem} Let $F \in \cI^m_s$ and let $F^*$ be its adjoint. Then, 
\begin{equation*}
FF^*u(x,h) = \frac{1}{(2\pi h)^n} \iint_{\RR^n \times \RR^n} e^{\frac{i}{h}(S(x,\xi) - S(y,\xi) )} a(x,\xi) \overline{a(y,\eta)} u(y,h) \ud y \ud \xi, \qquad \forall u \in \mathscr{D}'(\R^n), 
\end{equation*} we can write 
\begin{equation*}
FF^* = K_1 + K_2, 
\end{equation*} where $K_1 \in \Psi^{2m}_{s}$ and $K_2 = \cO_s(h^{\infty})$.
\end{lem}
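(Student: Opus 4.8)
The plan is to analyze the double integral defining $FF^*$ by stationary phase in the variables $(\xi,y)$ jointly, treating $x$ (and the output variable) as parameters. First I would write
\begin{equation*}
FF^*u(x,h) = \frac{1}{(2\pi h)^n} \int_{\RR^n} \left( \iint_{\RR^n\times\RR^n} e^{\frac{i}{h}(S(x,\xi) - S(y,\xi) + (y-z)\cdot\xi - \cdots)} a(x,\xi)\overline{a(y,\xi)}\, \ud y\, \ud \xi \right) u(z,h)\, \ud z,
\end{equation*}
i.e. I would exhibit $FF^*$ as an $h$-pseudodifferential-type operator with amplitude obtained by integrating out $(y,\xi)$. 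The key observation is that the phase $\Phi(x,y,\xi) = S(x,\xi) - S(y,\xi)$, viewed as a function of $(y,\xi)$ with $x$ fixed, has its critical points exactly where $S'_\xi(x,\xi) = S'_\xi(y,\xi)$ and $S'_y(y,\xi) = 0$; because of the nondegeneracy hypothesis $\det S''_{x,\eta}\neq 0$ on the FIO phase, the critical manifold is $y = x$ (locally), and the Hessian of $\Phi$ in $(y,\xi)$ at $y=x$ is nondegenerate with a fixed signature. This puts us exactly in the situation of the Gevrey stationary phase lemma proved earlier in the excerpt.

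The steps, in order, would be: (i) Localize using a Gevrey partition of unity so that $\supp a$ is a small neighbourhood and the critical point analysis above is valid; away from the diagonal $y=x$ the phase is non-stationary, so the Gevrey non-stationary phase lemma (Lemma \ref{prop: Proof Non-stationary Phase}) produces the contribution $K_2 = \cO_s(h^\infty)$. (ii) Near the diagonal, apply the Gevrey stationary phase lemma to the $(y,\xi)$-integral; this yields an asymptotic expansion $b(x,\xi,h)\sim \sum_{j\ge 0} h^j L_j[a\,\overline a]$ with $b \in \cS^{2m}_s$, each term being a Gevrey symbol, and with the correct order $2m$ (the factor $h^{-n}$ from $(2\pi h)^{-n}$ being absorbed by the $(2\pi h/\,\cdot\,)^{n/2}\cdot(2\pi h/\,\cdot\,)^{n/2}$-type Jacobian normalisation from stationary phase in $2n$ variables). (iii) Realize the formal symbol $\sum_j h^j L_j[\cdot]$ as a genuine Gevrey symbol via the Gevrey Borel/Carleson theorem (Theorem \ref{thm:Carleson}), obtaining $K_1 = \Op(b) \in \Psi^{2m}_s$; the difference between $FF^*$ and $K_1$ is then $\cO_s(h^\infty)$ by construction of the asymptotic series, and can be folded into $K_2$.

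The main obstacle I expect is bookkeeping the Gevrey-type estimates uniformly: one must check that the error in truncating the stationary-phase expansion at order $N$, together with the error from Carleson realization, combine to give a remainder of exactly the form \eqref{eq:Gevrey small}, i.e. $\beta!^s$-controlled derivatives times $\exp(-h^{-1/s}/C)$. This requires optimizing $N = N(h)$ (taking $N \sim c\, h^{-1/s}$) and using the formal-quasinorm machinery from \eqref{eq:quasinorms} to control the growth in $N$ of the constants appearing in $L_{f,N,y}[a\,\overline a]$; the point is that $L_{f,N,y}$ is a differential operator of order $2N$ acting on the Gevrey symbol $a\overline a$, so its sup-norm grows like $C^N N!^{2s}$ against the gain $h^N/N!$, and the Gevrey-$s$ rather than analytic estimate is precisely what makes the subexponential rate $h^{-1/s}$ (not $h^{-1}$) come out. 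Apart from this, a minor point is verifying that the critical point is exactly $y=x$ and computing the signature $\sigma$ so that the principal symbol of $K_1$ is $|a(x,\xi)|^2$ up to lower order; this is a direct consequence of $\det S''_{x,\xi}\neq 0$ and is routine.
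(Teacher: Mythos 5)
Your step for $K_2$ (cutoff away from the diagonal plus the Gevrey non-stationary phase Lemma \ref{prop: Proof Non-stationary Phase}) is the same as the paper's. The divergence, and the problems, are in your treatment of the near-diagonal part. First, a set-up issue: you cannot apply stationary phase in $(y,\xi)$ to the kernel of $FF^*$ with the phase $\Phi(x,y,\xi)=S(x,\xi)-S(y,\xi)$ alone, since $y$ is the variable that pairs with $u$; and for that phase the critical equations are $S'_\eta(x,\xi)=S'_\eta(y,\xi)$ together with $S'_x(y,\xi)=0$, the latter having in general no solutions on $\supp a$, so the critical set is \emph{not} $\{y=x\}$ as you claim. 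The analysis only becomes what you describe after you insert the extra linear phase $-(x-y)\cdot\zeta$ that arises when defining the symbol, e.g. $b(x,\zeta)=e^{-\frac{i}{h}x\cdot\zeta}\,FF^*\bigl(e^{\frac{i}{h}(\cdot)\cdot\zeta}\bigr)(x)$; your displayed formula with $u(z,h)$ and the truncated phase ``$(y-z)\cdot\xi-\cdots$'' conflates these two pictures. This is repairable, but as written the critical-point argument does not stand.

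The more serious gap is quantitative: the route ``stationary-phase expansion, then Carleson/Borel resummation, then absorb the difference into $K_2$'' cannot yield $K_2=\cO_s(h^{\infty})$ in the sense of (\ref{eq:Gevrey small}). As you yourself note, the $j$-th term involves a differential operator of order $2j$ acting on a $\cG^s$ amplitude, hence is of size $C^{j}j!^{2s-1}h^{j}$; optimizing the truncation order then gives at best a remainder $\exp\bigl(-h^{-\frac{1}{2s-1}}/C\bigr)$, i.e. a $\cG^{2s-1}$-small error, not a $\cG^{s}$-small one (and your proposed choice $N\sim h^{-1/s}$ makes $C^{N}N!^{2s-1}h^{N}$ diverge for $s>1$). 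This loss from $s$ to $2s-1$ is exactly the phenomenon the paper has to live with in Section 5, which is why Theorem \ref{thm:Egorov} is stated with $\cO_{\cG^{2s-1}}(h^\infty)$ remainders. The paper's proof of the present lemma avoids any expansion for the near-diagonal piece: writing $S(x,\xi)-S(y,\xi)=\Sigma(x,y,\xi)\cdot(x-y)$ and changing variables $\eta=\Sigma(x,y,\xi)$ (Kuranishi's trick, legitimate on a small support since $\det S''_{x,\eta}\neq 0$), the operator $K_1$ in (\ref{eq:k1}) is \emph{exactly} an $h$-pseudodifferential operator with Gevrey amplitude $\mathfrak{a}(x,y,\eta)\,\chi\,\vert J\vert$, hence lies in $\Psi^{2m}_s$ with no asymptotic series to control and no resummation error; the only remainder in the whole argument is the off-diagonal term, for which non-stationary phase does give the full $\cG^{s}$ rate. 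So besides the bookkeeping you anticipate, your approach as sketched proves a strictly weaker statement than the lemma claims; to reach the stated conclusion you should replace steps (ii)--(iii) by the exact change of variables.
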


\begin{proof}
Using a Gevrey$-s$ partition of the unity we may assume that $\supp(a)$ is small enough so that the mapping 
\begin{equation*}
\Sigma : (y,\eta) \mapsto (x,\xi), \qquad \xi = S'_x(x,\eta), \quad y = S'_{\eta}(x,\eta)
\end{equation*} is a canonical transformation on $\supp(a)$ of class $\cG^s$. Let $\chi \in \cG^s_0(\RR^n)$ and $r>0$ small enough so that $supp (\chi) \subset B(0,r)$ and $ \chi = 1$ on $B(0,\frac{r}{2})$. We can write 
\begin{equation*}
S(x,\xi) - S(y,\xi) = \Sigma(x,y,\xi)(x-y).
\end{equation*} Then, we have
\begin{equation*}
FF^* = K_1 + K_2
\end{equation*} with 
\begin{align}
K_1 u(x,h) & =  \iint_{\RR^n \times \RR^n} \mathfrak{a}(x,y,\eta) \chi\left( \frac{x-y}{\delta} \right)  e^{\frac{i}{h}(x-y)\cdot \eta} \vert J(x,y,\eta) \vert u(y,h) \frac{\ud y \ud \xi }{(2\pi h)^n}, \label{eq:k1} \\
K_2 u(x,h) & =  \iint_{\RR^n \times \RR^n} \mathfrak{a}(x,y,\eta) \left( 1 -  \chi\left( \frac{x-y}{\delta} \right) \right) e^{\frac{i}{h}(x-y)\cdot \eta} \vert J(x,y,\eta) \vert u(y,h)  \frac{\ud y \ud \xi}{(2\pi h)^n}, \label{eq:k2}
\end{align} where 
\begin{equation}
\mathfrak{a}(x,y,\eta) := a(x,\Sigma^{-1}(x,y,\eta)) \overline{a(y,\Sigma^{-1}(x,y,\eta))}, \qquad J = \partial_{\eta} \Sigma^{-1}.
\end{equation} Using Kuranishi's trick (cf. \cite[Chapter 8]{Zworski}), we deduce that $K_1$ is a $\cG^s$ PDO  of order $2m$. \par 

Next, using Gevrey non stationary phase we obtain that $K_2$ is a Gevrey $s$ negligeable reminder, while it is easy to compute the principal symbol of $K_1$, which is of order $2m$.

\end{proof}

\subsection{Composition of FIO and PDOs in the Gevrey setting}

The following result is an adaptation of \cite[Lemmas 62.2 and 62.3]{Eskin} to the Gevrey stting, which is possible with only minor modifications thanks to Lemma \ref{prop: Proof Non-stationary Phase}.

\begin{prop}
Let $F$ be a $\cG^s$-FIO of the form (\ref{eq:GevreyFIO}) for some phase $S\in \cG^s$ and symbol $a\in \cS^{m_0}_s$. Then, for every $A = A(x,D) \in \Psi_s^m $ one has 
\begin{equation}
AFu(x) = \frac{1}{(2\pi h)^n} \int c(x,\eta) e^{\frac{i}{h}S(x,\eta)} \hat{u}(\eta) \ud \eta, \qquad u \in \mathscr{D}'(\R^n),
\end{equation} for some $c \in \cS_s^{m + m_0}$ whose principal symbol satisfies
\begin{equation*}
c_0(x,\eta) = a_0(x,S'_x(x,\eta))a(x,\eta),
\end{equation*} where $a_0$ is the principal symbol of $A$. Also,
\begin{equation}
FAu(x) = \frac{1}{(2\pi h)^n} \int_{\R^{n} } c'(x,\eta) e^{\frac{i}{h}S(x,\eta)} \hat{u}(\eta) \ud \eta + R'u, \qquad u \in \mathscr{D}'(\R^n),
\end{equation} for some $c' \in \cS^{m + m_0}$ whose principal symbol satisfies 
\begin{equation}
c_0'(x,\eta) = a_0S'_\eta(x,\eta)a(x,\eta),
\end{equation} and $R' \in \Psi_s^{-\infty}$.
\label{prop:composition FIO pdo}
\end{prop}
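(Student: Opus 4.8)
The plan is to reduce both composition formulas to the oscillatory-integral calculus already available, using the non-stationary phase Lemma~\ref{prop: Proof Non-stationary Phase} to control the remainders in the $\cG^s$ topology and the stationary phase expansion (the proof of Lemma at the end of Section~\ref{sec: Stationary Phase}) to extract the symbol and its principal part. First I would treat $AF$. Writing $A = \Op(a_0)$ and inserting the definition \eqref{eq:GevreyFIO} of $F$, one gets
\begin{equation*}
AFu(x) = \frac{1}{(2\pi h)^{2n}} \iiiint e^{\frac{i}{h}\big( (x-z)\cdot\zeta + S(z,\eta) - y\cdot\eta\big)} a(x,\zeta) a(z,\eta) u(y) \ud y \ud\eta \ud z \ud\zeta,
\end{equation*}
and, after performing the $y$-integration to produce $\hat u(\eta)$, I change variables so that the phase in the $(z,\zeta)$ integral becomes $(x-z)\cdot\zeta + S(z,\eta) - S(x,\eta)$. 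The critical point of this phase in $(z,\zeta)$ is $z=x$, $\zeta = S'_x(x,\eta)$, it is nondegenerate (the Hessian is the standard antidiagonal block), and the phase is real, so the Gevrey stationary phase lemma applies with parameter $y\rightsquigarrow(x,\eta)$: it yields a symbol $c(x,\eta)\in\cS_s^{m+m_0}$ with a full asymptotic expansion, whose leading term is the value of $a(x,\zeta)a(z,\eta)$ at the critical point, namely $c_0(x,\eta) = a_0(x,S'_x(x,\eta))\,a(x,\eta)$. The tail of the Borel-summed expansion and any genuinely non-stationary contributions away from $z=x$ are $\cO_{\cG^s}(h^\infty)$ by Lemma~\ref{prop: Proof Non-stationary Phase}, and these can be absorbed (no separate remainder term is needed for $AF$ because the stationary point is unique and interior, once a $\cG^s$ cutoff localizes near it).

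For $FA$ the computation is dual: writing $A = \Op(a_0)$ on the right and inserting \eqref{eq:GevreyFIO} gives
\begin{equation*}
FAu(x) = \frac{1}{(2\pi h)^{2n}} \iiiint e^{\frac{i}{h}\big( S(x,\eta) - z\cdot\eta + (z-y)\cdot\zeta\big)} a(x,\eta) a_0(z,\zeta) u(y) \ud y \ud\eta \ud z \ud\zeta.
\end{equation*}
Here the $(z,\zeta)$-phase $(z-y)\cdot\zeta - z\cdot\eta + \cdots$ has critical point $\zeta=\eta$, $z = S'_\eta(x,\eta)$ — but this time the critical point involves $y$, so one first does the oscillatory integral in $(z,\zeta)$ keeping $y$ as a parameter; localizing with a $\cG^s$ cutoff $\chi((z-y)/\delta)$ as in the proof of the $FF^*$ lemma splits the operator as a contribution with a genuine stationary phase (giving the symbol $c'$ with $c_0'(x,\eta)=a_0\big(S'_\eta(x,\eta)\big)a(x,\eta)$ after the stationary phase expansion in the $(z,\zeta)$ variables and re-identifying $\hat u(\eta)$) plus a term where $|z-y|\ge\delta/2$, on which the $\zeta$-phase $(z-y)\cdot\zeta$ is non-stationary; the latter is $\cO_{\cG^s}(h^\infty)$ by Lemma~\ref{prop: Proof Non-stationary Phase}, and is recorded as the smoothing operator $R'\in\Psi_s^{-\infty}$. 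Throughout, the fact that the constants in \eqref{eq:Gevrey symbol} are uniform in the multi-indices is what guarantees the output symbols $c,c'$ lie in $\cS_s^{m+m_0}$ rather than merely in the $C^\infty$ class, and this uniformity must be tracked through the stationary phase expansion via the differential operators $L_{f,j,y}$ of degree $2j$, whose action on a $\cG^s$ symbol again satisfies a Gevrey bound with a controlled constant.

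The main obstacle I expect is precisely this bookkeeping: verifying that the Borel-resummed stationary phase series $\sum_j h^j L_{\cdot,j}[\cdot]$ produces a bona fide $\cS_s^{m+m_0}$ symbol rather than only a formal one — but this is exactly what the formal-symbol / Carleson machinery (Definition~\ref{eq:symbole formel} and Theorem~\ref{thm:Carleson}) is designed to handle, since the sequence of terms forms a formal $\cG^s$ symbol of the right degree, and the quasinorm estimates \eqref{eq:quasinorms} give the needed uniform control on $L_{f,j,y}[a]$ via the submultiplicativity $\overline N_{m+m'}(aa',T)\le \overline N_m(a,T)\overline N_{m'}(a',T)$. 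A secondary technical point is the Kuranishi-type change of variables that writes $S(x,\eta)-S(y,\eta)$ or the mixed phase differences as $(x-y)\cdot(\text{something})$ with a $\cG^s$ Jacobian; this is the same trick used in the $FF^*$ lemma above and goes through verbatim in the Gevrey class since $S\in\cG^s$ with $\det S''_{x,\eta}\neq 0$. Once these are in place the principal symbols drop out as the critical-point values stated, and the proof concludes.
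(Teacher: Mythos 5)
Your overall plan coincides with what the paper actually does for this proposition: the paper gives no detailed argument, but states just before the proposition that the result is an adaptation of Eskin's Lemmas 62.2 and 62.3, with the Gevrey-sized remainders supplied by Lemma \ref{prop: Proof Non-stationary Phase} and the main terms by the Gevrey stationary phase expansion of Section \ref{sec: Stationary Phase}; your use of the Kuranishi-type change of variables and of the Carleson/quasinorm machinery to keep the constants uniform is exactly the intended adaptation. Your treatment of $AF$ is correct: after factoring out $e^{\frac{i}{h}S(x,\eta)}$, the $(z,\zeta)$-phase $(x-z)\cdot\zeta+S(z,\eta)-S(x,\eta)$ has the unique nondegenerate real critical point $z=x$, $\zeta=S_x'(x,\eta)$, and the leading term is $a_0(x,S_x'(x,\eta))a(x,\eta)$ as claimed (a harmless slip: you write $A=\Op(a_0)$ although $a_0$ is only the principal symbol of $A$; the computation should be run with the full symbol, say $\tilde a$, whose leading part is $a_0$).

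There is, however, a concrete gap in the $FA$ half. With $y$ kept as a parameter, the $(z,\zeta)$-phase $(z-y)\cdot\zeta-z\cdot\eta$ in your quadruple integral is stationary at $z=y$, $\zeta=\eta$, not at $z=S_\eta'(x,\eta)$; applying stationary phase there produces the amplitude $a(x,\eta)\,\tilde a(y,\eta)$, which still depends on $y$, so the output is an FIO with a $y$-dependent amplitude and is not yet of the stated form $\frac{1}{(2\pi h)^n}\int c'(x,\eta)e^{\frac{i}{h}S(x,\eta)}\hat u(\eta)\,\ud\eta$; in particular the value $a_0(S_\eta'(x,\eta),\eta)$ cannot be read off at that stage, contrary to what you assert. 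The repair is the regrouping used in Eskin: first carry out the $y$-integration to produce $\hat u(\zeta)$, so the remaining phase is $S(x,\eta)-z\cdot\eta+z\cdot\zeta$, and then apply the Gevrey stationary phase in the pair $(z,\eta)$, whose critical point is $\eta=\zeta$, $z=S_\eta'(x,\zeta)$; relabeling $\zeta$ as $\eta$ yields $c_0'(x,\eta)=a_0(S_\eta'(x,\eta),\eta)a(x,\eta)$, while the cut-off complementary region, handled by Lemma \ref{prop: Proof Non-stationary Phase}, is what produces $R'\in\Psi_s^{-\infty}$. (Alternatively, keep your order of integration but add the extra reduction that removes the $y$-dependence of the amplitude, i.e.\ a second stationary-phase/Kuranishi step at $y=S_\eta'(x,\eta)$.) A secondary point applying to both halves: the stationary and non-stationary phase lemmas of Section \ref{sec:Stationary} are stated for compactly supported amplitudes, whereas the frequency integrals here run over all of $\R^n$ with symbols that do not decay, so the region far from the critical point needs an integration-by-parts argument with tracked Gevrey constants before those lemmas can be invoked; the paper leaves this implicit, but your write-up should at least flag it.
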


\subsection{Gevrey wavefront and Lagrangian distributions}

We end this section with some details on the wavefront set adapted to the Gevrey setting.

\begin{defn}[Microlocal conjugation by Gevrey FIOs]
Given $P,Q \in \Psi^m_s$, we say that $P$ and $Q$ are microlocal conjugate in the $\cG^s$ sense around a point $(x_0,\xi_0) \in T^*\R^n$ if there exists a FIO $R \in \cI_s^m$ such that $RP$ equals $QR$ microlocally around $(x_0,\xi_0)$.
\label{def:microlocal conjugation}
\end{defn}

Consider a Lagrangian distribution of the form
\begin{equation}
u(x,h) = \int e^{\frac{i}{h} \phi(x,\theta) } a(x,\theta,h) \ud \theta
\label{eq:Lagrangian distribution}
\end{equation} for a phase $\phi$ and amplitude $a$ satisfying 
\begin{align*}
\phi \in \G^s(\R^n \times \R^N), \\
\Im \phi \geq 0, \qquad d\phi \not = 0, \\ 
a \in \cS^{m_0}_s(\R^n \times \R^N).
\end{align*}
Assume $\phi$ homogeneous of degree 1 in $\theta$ and $a \in \cS^{m_0,k_s}$ or $a \in \G^s_0 \cap \cS^{m_0}_s$ (see \cite{LascarLascar})

If $\WFh(u)$ is the semiclassical wavefront of $(u(\cdot, h))_h$, then it is standard that 
\begin{equation*}
\WFh(u) \subset \left\{  (x,\phi_x') \, \vert \quad (x,\theta) \in F, \, \phi_{\theta}'(x,\theta)  = 0           \right\},
\end{equation*} with $supp (a) \subset F$. The next result is a analogue version of this fact for the Gevrey semiclassical wavefront $\WFsh(u)$.

\begin{lem} Let $u$ as in (\ref{eq:Lagrangian distribution}). Then, one has 
\begin{equation*}
\WFsh(u) \subset \Lambda = \left\{  (x,\phi_x') \, \vert \quad (x,\theta) \in F, \, \phi_{\theta}'(x,\theta)  = 0           \right\},
\end{equation*}
\end{lem}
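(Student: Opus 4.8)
The plan is to mimic the classical (smooth) proof that controls the wavefront set of a Lagrangian distribution defined by an oscillatory integral, but keeping track of the Gevrey-type bounds so that the exponential loss matches the definition of $\WFsh$ via \eqref{eq:Gevrey small}. First I would fix a point $(x_*,\xi_*)$ \emph{not} in the purported set $\Lambda$; this means that for every $(x_*,\theta)$ with $(x_*,\theta)\in F$ one has either $\phi_\theta'(x_*,\theta)\neq 0$ or $\phi_x'(x_*,\theta)\neq\xi_*$. I would then localize: pick $\chi\in\cG^s_0$ supported near $x_*$ and equal to $1$ in a smaller neighbourhood, and study the semiclassical Fourier transform $\widehat{\chi u}(\xi/h)$ (or rather the FBI/wave-packet transform) for $\xi$ near $\xi_*$. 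Plugging in \eqref{eq:Lagrangian distribution}, this is an oscillatory integral in $(x,\theta)$ with total phase $\psi(x,\theta,\xi)=\phi(x,\theta)-x\cdot\xi$ and amplitude $\chi(x)a(x,\theta,h)$.

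The key point is that on the support of the (localized) amplitude, $d_{(x,\theta)}\psi=(\phi_x'-\xi,\ \phi_\theta')$ does not vanish: if $\phi_\theta'\neq 0$ we are done immediately, and if $\phi_\theta'=0$ at some point then by assumption $\phi_x'\neq\xi_*$, so after shrinking the $x$-neighbourhood and the $\xi$-neighbourhood we still have $\phi_x'-\xi\neq 0$ there. By a compactness argument (the relevant piece of $F$ over $\supp\chi$, after truncating $\theta$ suitably using $\Im\phi\geq 0$ and the homogeneity/symbol bounds to discard large $\theta$), we get a uniform lower bound $|d_{(x,\theta)}\psi|\geq c>0$ on the region that contributes. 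Then I would invoke the Gevrey non-stationary phase Lemma~\ref{prop: Proof Non-stationary Phase}: it gives precisely that the resulting integral, together with all its derivatives in the remaining parameters (here $\xi$, or the wave-packet centre), is $\cO_{\cG^s}(h^\infty)$, i.e.\ bounded by $C^{1+|\beta|}\beta!^s\exp(-h^{-1/s}/C)$. That is exactly the statement that $(x_*,\xi_*)\notin\WFsh(u)$ according to the Gevrey semiclassical wavefront definition. Since $(x_*,\xi_*)\notin\Lambda$ was arbitrary, $\WFsh(u)\subset\Lambda$.

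The main obstacle I expect is the handling of the $\theta$-integration over a non-compact set when $N$ can be large and $\phi$ is homogeneous of degree $1$ in $\theta$: one must first justify, using the hypotheses $\Im\phi\geq 0$, $d\phi\neq 0$, and the symbol class ($a\in\cS^{m_0,k_s}$ or $a\in\cG^s_0\cap\cS^{m_0}_s$ as indicated), that the contribution of $|\theta|$ large is already $\cO_{\cG^s}(h^\infty)$ — essentially an integration-by-parts in $\theta$ using that $\phi_\theta'$ is bounded below for large $|\theta|$ on the relevant cone — so that Lemma~\ref{prop: Proof Non-stationary Phase}, which is stated for compactly supported amplitudes, can be applied to the remaining compact piece after a partition of unity. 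A secondary technical point is that the Gevrey bounds must survive the change of variables straightening the phase and the partition of unity, but this is already absorbed in the statement and proof of Lemma~\ref{prop: Proof Non-stationary Phase}. Once the reduction to the non-stationary Gevrey lemma is in place, the conclusion is immediate; I would keep the write-up short, emphasizing the dichotomy $\phi_\theta'\neq 0$ versus $\phi_x'\neq\xi_*$ and the truncation in $\theta$, and citing Lemma~\ref{prop: Proof Non-stationary Phase} for the decay.
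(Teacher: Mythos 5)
Your proposal is correct and follows essentially the same route as the paper: the paper's proof simply invokes the classical argument of H\"ormander (Ch.~25) with minor modifications and notes that the Gevrey decay comes from the non-stationary phase Lemma~\ref{prop: Proof Non-stationary Phase}, which is exactly the argument you spell out (non-stationarity of $\phi(x,\theta)-x\cdot\xi$ off $\Lambda$, localization, truncation in $\theta$, then the Gevrey non-stationary phase lemma). Your write-up is in fact more explicit than the paper's, but there is no genuinely different idea involved.
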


\begin{proof} This is a consequence of the results in \cite[Ch. 25]{Hormander} with minor modifications. The Gevrey asymptotic comes from the non-stationary phase lemma in Lemma \ref{prop: Proof Non-stationary Phase}. 

\end{proof}

\section{Proof of Theorem \ref{thm:Egorov1}}
\label{sec:ProofEgorov}

The goal of this section is to prove Theorem \ref{thm:Egorov1}. We divide the proof into several steps, according to the usual WKB method.

\par 
\vspace{0.5em}
\textbf{Step 1. Setting of the problem.}

We may assume w.l.g. that $(x_0,\xi_0)=(0,0)$. Let us choose once for all a particular real coordinate and write as usual $x =(x_1,x')\in \R\times \R^{n-1}$ and $\xi =(\xi_1,\xi')\in \R\times \R^{n-1}$. As $P$ is of real principal type by hypothesis, we can assume that the principal symbol of $P$ writes (cf. for instance \cite[Section 12.2]{Zworski}) 
\begin{equation}
p (x,\xi_1,\xi') = \xi_1 - \lambda(x,\xi'),
\label{eq:Thm2 evolution eq symbol}
\end{equation} for some real symbol $\lambda \in \cS_s^0(\R^n \times \R^{n-1})$ real. If $\lambda(x,\xi')$ stands for the principal symbol of $Q$ let us write 
\begin{equation}
P(x,hD_{x_1},hD_{x'}) = hD_{x_1} + Q(x,hD_{x'}).
\label{eq:Thm2 evolution eq}
\end{equation}  

\par 
\vspace{0.5em}
\textbf{Step 2. Suitable phase using the eikonal equation.}

We consider now the following Cauchy problem: Find $\varphi$ such that   
\begin{equation}
\left\{  \begin{array}{l}
\frac{\partial \varphi}{\partial x_1} - \lambda \left( x, \varphi_x'  \right) = 0, \\
\varphi|_{x_1=0} = x'\cdot \eta',
\end{array}  \right.
\label{eq:Thm2 eikonal} 
\end{equation} where $\lambda \in \cS_s^0(\R^n \times \R^{n-1})$ given by (\ref{eq:Thm2 evolution eq symbol}). The Cauchy problem (\ref{eq:Thm2 eikonal}) has a solution $\varphi$ of class $\cG^s$. Moreover, since $\det \varphi_{x',\xi'}'' \not = 0$ the phase function $\varphi$ generates a canonical transform. \par 

Our goal in the following sections is to find suitable $h$-FIOs $F,G$ such that 
\begin{equation}
GPF = hD_{x_1} + \cO_s(h^\infty), 
\label{eq:Thm2 claim}
\end{equation} 

\par 
\vspace{0.5em}
\textbf{Step 3. Reducing the order of the remainder by conjugation.}

Let us introduce a $h-$FIO of the form 
\begin{equation*}
Fu(x,h) = \frac{1}{(2 \pi h)^{n-1}} \int e^{\frac{i}{h}\varphi(x,\eta')} a(x,\eta') \hat{u}(x_1,\frac{\eta'}{h}) d \eta', \qquad u \in \mathscr{S}(\R^n),
\end{equation*} where $a \in \cS_s^{\mu}(\R^n \times \R^{n-1})$, $\hat{u}$ denotes the partial Fourier transform in the variables $x'$ only and $\varphi$ is given as before. Assume that $a$ is elliptic when $x_1 = 0, x' = 0, \xi_1 = 0, \xi'=0$.

Now, $F$ defined above is a h-FIO of the form (\ref{eq:GevreyFIO}) associated to a $\cG^s$ phase and symbol. As $P \in \Psi_s^0$ by hypothesis, we can compose $F$ and $P$ using Proposition \ref{prop:composition FIO pdo}. Indeed, from (\ref{eq:Thm2 evolution eq}) we get, for every $u \in \mathscr{S}(\R^n)$, 
\begin{equation*}
PFu = (hD_{x_1} + Q)Fu =  hD_{x_1}Fu + QFu 
\end{equation*} Now, as the symbol $a$ is supposed to be elliptic at the point $ (x_1, x', \xi_1, \xi')=0$, we may find a microlocal inverse of $F$ near this point and thus write  
\begin{equation*}
PFu = F(hD_{x_1}) + F_{-1} u, 
\end{equation*} for some $F_{-1} \in I_s^{\mu-1}$. Moreover, the symbol of $F_{-1}$, namely $\mathfrak{f}$, admits the expansion (as in \cite[Theorem 7.7.7]{Hormander}):
\begin{equation*}
\mathfrak{f} = \sum_{\alpha \geq 0} \frac{h^{|\alpha|}}{\alpha !} p^{(\alpha)}(x,\varphi_x') D_y^{\alpha}(e^{\frac{i}{h}\rho}a)|_{y=x},
\end{equation*} with 
\begin{equation*}
\rho(x,y,\eta')= \varphi(y,\eta') - \varphi(x,\eta') - (y-x) \varphi_x'(x,\eta'). 
\end{equation*} We observe that the first term of the expansion vanishes, for 
\begin{equation*}
 p(x,\varphi_x') (e^{\frac{i}{h}\rho}a)|_{y=x} =  p(x,\varphi_x') (e^{\frac{i}{h}(\varphi(y,\eta') - \varphi(x,\eta') - (y-x))}a)|_{y=x} = hD_{x_1} \varphi - \lambda(x,\varphi_{x'}') = 0, 
\end{equation*} thanks to (\ref{eq:Thm2 eikonal}). Hence, $F_{-1} \in \Psi_s^{-1}$.  \par 

Let now $G$ be the microlocal inverse of the elliptic $h$-FIO $F$ near zero. Then, we can write 
\begin{equation*}
GF = I + r, \qquad \textrm{microlocally near } (0,0,0,0),
\end{equation*} for $r \in \Psi_s^{-\infty}$ and hence
\begin{equation*}
GPFu = GF(hD_{x_1}) + GF_{-1}  = hD_{x_1} + R,   
\end{equation*} where $R = r hD_{x_1} + GF_{-1}$. Now, as $G$ and $F_{-1}$ are associated to the same generating function, we have in particular that $GF_{-1}$ is a pdo and moreover $GF_{-1} \in \Psi_s^{-1}$. Furthermore, as $r$ is a regularising pdo, we also have $r hD_{x_1} \in \Psi_s^{-1}$, which guarantees that 
\begin{equation*}
GPFu =  hD_{x_1} + R,   \qquad R \in \Psi_s^{-1},
\end{equation*} microlocally near zero. This means that $P$ is microlocally conjugate by $\cG^s$ FIOs to $hD_{x_1}$ up to a remainder of degree $-1$.

\par 
\vspace{0.5em}
\textbf{Step 4. Reducing arbitrarily the order of the remainder by conjugation.}

We aim now to iterate the scheme of the previous step in order to get (\ref{eq:Thm2 claim}) for a remainder of arbitrary order. We do this thanks to the following iterative scheme, which produces a formal Gevrey symbol.

\begin{lem} Let $q \in \cS^{-1}_s(\RR^n \times \RR^n)$. Then, there is a sequence $(a_j)_{j\geq 0}$ of symbols such that $a_j \in \cS^{-j}_s(\RR^n \times \RR^n)$ satisfying
\begin{equation*}
\frac{h}{i} \partial_{x_1}a_0 + qa_0 = 0, \qquad \textrm{and} \qquad a_0\vert_{x_1 = 0} \textrm{ is elliptic},
\end{equation*} and for all $j\geq 1$,
\begin{equation}
\frac{h}{i} \partial_{x_1}a_j + q a_j = r_1(q,a_{j-1}), \qquad \textrm{and} \qquad a_j\vert_{x_1 = 0} = 0.
\label{eq:stationary equations}
\end{equation} Moreover, if
\begin{equation*}
A_j = \Op(a_j), \qquad A^{(N)} = A_0 + \dots + A_{N-1}, \,\, N \geq 1,
\end{equation*} one has 
\begin{equation*}
(hD_{x_1} + Q) A^{(N)} = h A^{(N)}D_{x_1} + R_N, 
\end{equation*} with $R_N$ a $\cG^s \in \Psi_s^{-(N+1)}$. Finally, if $q$ is of order $m_0$ small enough the above sequence of $(a_j)_{j\geq 0}$ is a formal Gevrey $s$ symbol in the sense of Defintion \ref{eq:symbole formel}.
\label{lem:transport recursif}
\end{lem}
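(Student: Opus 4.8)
The plan is to carry out the three standard steps of a WKB construction, now in quantitative Gevrey form: solve the transport equations as ODEs in $x_1$; deduce the operator identity from the symbolic calculus of Proposition~\ref{lemma:composition} together with a telescoping; and control the symbols $a_j$ uniformly in $j$ so that $(a_j)_{j\ge0}$ is a formal $\cG^s$ symbol. Here $Q=\Op(q)$ with $q\in\cS^{-1}_s$ the remainder symbol produced in Step~3, and everything is local near $x_1=0$. The main obstacle is the last step.

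\textbf{Step 1 (transport equations).} For fixed $(x',\theta)$, each equation in \eqref{eq:stationary equations} and the equation for $a_0$ is a linear scalar first-order ODE in $x_1$ with $\cG^s$ coefficient $q$. Setting
\[
E(x_1,x',\theta)=\exp\!\Big(\tfrac{i}{h}\int_0^{x_1}q(t,x',\theta)\,dt\Big),
\]
and using that $q\in\cS^{-1}_s$ forces $|q|\le Ch$, the exponent $\tfrac1h\int_0^{x_1}q\,dt$ is an order-zero $\cG^s$ symbol, bounded with all its derivatives uniformly in $h\in(0,1]$ near $x_1=0$; hence $E^{\pm1}\in\cS^0_s$ there with $h$-uniform bounds. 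We take $a_0|_{x_1=0}\equiv 1$ (elliptic), $a_0=E^{-1}$, and inductively, given $a_{j-1}\in\cS^{-(j-1)}_s$, set by Duhamel
\[
a_j(x_1,x',\theta)=\frac{i}{h}\,E^{-1}(x_1,x',\theta)\int_0^{x_1}E(t,x',\theta)\,r_1(q,a_{j-1})(t,x',\theta)\,dt .
\]
By Proposition~\ref{lemma:composition}, $r_1(q,a_{j-1})\in\cS^{m_0-j}_s$ carries an explicit factor $h$, so the prefactor $\tfrac ih$ is harmless and $a_j\in\cS^{-j}_s$ (of strictly smaller order if $m_0<-1$); closedness of $\cG^s$ under products, $x_1$-primitives and composition with $E^{\pm1}$ gives $a_j\in\cG^s$, and $a_j|_{x_1=0}=0$ for $j\ge1$.

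\textbf{Step 2 (operator identity).} Since $hD_{x_1}=\Op(\theta_1)$ with $\theta_1$ independent of $x$, the expansion \eqref{eq:Taylor} is exact, giving $hD_{x_1}\Op(a_j)=\Op(a_j)\,hD_{x_1}+\Op\!\big(\tfrac{h}{i}\partial_{x_1}a_j\big)$, while \eqref{eq:Taylor} with $N=1$ gives $Q\,\Op(a_j)=\Op\!\big(qa_j+r_1(q,a_j)\big)$. Summing over $j=0,\dots,N-1$,
\[
(hD_{x_1}+Q)A^{(N)}=hA^{(N)}D_{x_1}+\Op\!\Big(\sum_{j=0}^{N-1}\big(\tfrac{h}{i}\partial_{x_1}a_j+qa_j+r_1(q,a_j)\big)\Big),
\]
and the transport equations \eqref{eq:stationary equations} are arranged precisely so that $\tfrac{h}{i}\partial_{x_1}a_j+qa_j$ cancels the contribution $r_1(q,a_{j-1})$ of the previous level; the sum then telescopes and only $r_1(q,a_{N-1})$ survives. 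As $a_{N-1}\in\cS^{-(N-1)}_s$, Proposition~\ref{lemma:composition} gives $r_1(q,a_{N-1})\in\cS^{m_0-N}_s\subset\cS^{-(N+1)}_s$ (using $m_0\le-1$), so $R_N=\Op(r_1(q,a_{N-1}))\in\Psi^{-(N+1)}_s$.

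\textbf{Step 3 (formal $\cG^s$ symbol --- the main obstacle).} For \eqref{eq:condition symbole precisee} we run the recursion inside the quasinorms $N_m(\cdot,T)$ of \eqref{eq:quasinorms}, using their submultiplicativity. Two estimates are needed: a continuity bound for the remainder map, of the shape $\overline{N}(r_1(q,a),T)\le C\,h\,\overline{N}(q,T)\,\overline{N}(a,T)$ (with the order grading shifted by one), obtained from the integral representation of $r_1$ in Proposition~\ref{lemma:composition}, the $J^t$-estimates of Lemma~\ref{lem:Jt}, and the Gevrey non-stationary phase Lemma~\ref{prop: Proof Non-stationary Phase}; and a bound for the Duhamel operator $g\mapsto a$, $\overline{N}(a,T)\le C\,h^{-1}\,\overline{N}(E^{-1},T)\,\overline{N}(E,T)\,\overline{N}(g,T)$, using submultiplicativity together with the fact that taking an $x_1$-primitive is bounded on $N_m(\cdot,T)$ (it turns $\partial_{x_1}^k$ into $\partial_{x_1}^{k-1}$, improving the weight $T^k/k!^s$). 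As the $h$ gained from $r_1$ absorbs the $h^{-1}$ of Duhamel --- with genuine $h$-decay to spare exactly when $m_0$ is chosen sufficiently negative --- an induction on $j$ yields $\overline{N}_{-j}(a_j,T)\le A_0A^jj!^sh^j$ with $A_0,A$ depending only on $n,s$ and the quasinorms of $q$, the $j!^s$ being the loss from iterating the Gevrey stationary-phase estimates $j$ times; reading this off the definition of $N_{-j}(\cdot,T)$ term by term gives \eqref{eq:condition symbole precisee}. The genuine difficulty is keeping $A$ independent of $j$ while each composition shrinks the admissible quasinorm parameter; this is handled in the standard way, by running the induction with a parameter $T_j$ decreasing mildly with $j$ and re-summing at the end.
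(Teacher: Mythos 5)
Your proposal is correct and takes essentially the same route as the paper: you solve the transport hierarchy by an explicit integrating-factor/Duhamel formula, obtain the operator identity and $R_N\in\Psi^{-(N+1)}_s$ by telescoping the $N=1$ expansion of Proposition~\ref{lemma:composition}, and prove \eqref{eq:condition symbole precisee} by an induction on the quasinorms \eqref{eq:quasinorms} with a step-dependent adjustment of the parameter $T$ (the paper's implementation of your ``mildly decreasing $T_j$'' device is the choice $\eps=\delta/j$, which keeps the dilated parameter $T(1+\delta/j)^{sj}$ below $2T$). The only discrepancies are bookkeeping slips rather than gaps: the sign needed for the telescoping (the paper's own Duhamel formula in fact solves the equation with $-r_1(q,a_{j-1})$ on the right), and the superfluous factor $h^j$ in your claimed bound on $\overline{N}_{-j}(a_j,T)$, whose weight $h^{-j}$ already accounts for the $h$-decay of $a_j$.
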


\begin{proof}
Equations (\ref{eq:stationary equations}) are solved by induction setting for $j\geq 1$
\begin{equation*}
a_j = -i \frac{a_0}{h} \int_0^{x_1} \frac{r_1}{a_0}(q,a_{j-1}) \ud t.
\end{equation*} One has $a_j \in \cS_s^{-j}(\RR^n \times \RR^n)$, in view of (\ref{eq:Taylor}) and (\ref{eq:stationary equations}) we have $R_N$ is of order $-N-1$.

\par
\vspace{0.5em}

We now show that if $q$ is of order $m_0$ small enough the above sequence of $(a_j)_{j\geq 0}$ is a formal Gevrey $s$ symbol.

 In order to prove (\ref{eq:condition symbole precisee}) we use the formal quasinorms for Gevrey symbols introduced in (\ref{eq:quasinorms}). Upon repeating the scheme of the previous step, we may assume that $P=hD_1 + Q$, with $Q \in \Psi_s^{m_0}$ of order $m_0$ small, is a microlocal conjugate to $hD_{x_1}$ by $\cG^s$ FIOs, modulo negligeable remainders. If $q$ is the symbol of $Q$, recall that 
\begin{equation*}
\Op(q) \circ \Op(a) = \Op(qa + R), \qquad R = R(a,q) 
\end{equation*} with a remainder $R$ as in Lemma \ref{lemma:composition}. Let us define next $\chi \in \cG^s_0(\RR^{2n})$ such that 
\begin{equation*}
\supp \chi \subseteq \left\{(y,\eta); \, \vert y \vert  +   \vert \eta \vert \leq r \right\} \qquad \textrm{ and } \qquad \chi = 1 \textrm{ on } \left\{(y,\eta); \, \vert y \vert  +   \vert \eta \vert \leq \frac{r}{2} \right\}
\end{equation*} and let us write 
\begin{equation*}
R(a,q) = R_{\chi}(a,q) + S_{\chi}, \qquad R_{\chi} = \chi R(a,q), \quad S_{\chi} = (1 - \chi) R(a,q).   
\end{equation*} We deduce that $S_{\chi}$ is Gevrey $s$ remainder by the non-stationary phase asymptotics in Lemma \ref{prop: Proof Non-stationary Phase}. Now if $\alpha,\beta \in \NN^n$, if $\eps>0$ small enough, we also have
\begin{equation}
\overline{N_m}(\partial_x^{\alpha}\partial_{\theta}^{\alpha}a,T) \leq  T^{- (\vert \alpha \vert + \vert \beta \vert ) }  \eps^{- s(\vert \alpha \vert + \vert \beta \vert ) }  \alpha!^s \beta!^s  \overline{N_m}(a,T(1 + \eps)^s).
\label{eq:seminorms estimate}
\end{equation} Now, if $q\in \cS^{m_0}_s(\RR^n \times \RR^n)$ with $m_0 \leq -n$, then from the previous equations, 
\begin{equation*}
\overline{N}_{m-1}(R_{\chi}(a),T) \leq \tau^{2n} C_n \eps^{-s} M_0(q) T^{-1} \overline{N}_m (a, T(1 + \eps)^s). 
\end{equation*} If $(a_j)_{j\geq 0}$ is the sequence defined by (\ref{eq:stationary equations}). We claim that estimate (\ref{eq:seminorms estimate}) implies, for $j\geq 0$, $\eps > 0$ small that for some $M_0' = M_0'(r,q,n)>0$ one has
\begin{equation}
\overline{N}_{-j}(a_j,T) \leq M_0'^j \eps^{-sj} j^{sj} \overline{N}_0(a_0,2T)T^{-j}.
\label{eq:seminorms estimate induction}
\end{equation} Indeed, this is true if $j=1$ from (\ref{eq:seminorms estimate}). Assuming that (\ref{eq:seminorms estimate induction}) holds for $j-1$ with $j\geq 2$, we have for $\eps$, $\tilde{\eps}>0$ small
\begin{equation*}
\overline{N}_{-j}(a_j,T) \leq \eps^{-sj} \frac{M_0'^j}{T^j} \frac{(j-1)^{s(j-1)}}{(1 + \eps)^{s(j-1)} \tilde{\eps}^{s(j-1)} } \overline{N}_0 \left(a_0,T(1 + \eps)^s\left( 1 + \frac{\tilde{\eps}}{j-1} \right)^{s(j-1)} \right). 
\end{equation*} We fix $\delta > 0$ small and choose $\frac{\tilde{\eps}}{j-1} = \frac{\delta}{j} $ and  $\eps = \frac{\delta}{j} $ in the previous inequality, so that 
\begin{equation*}
(1 + \eps)^s \left( 1 + \frac{\tilde{\eps}}{j-1} \right)^{s(j-1)} = \left( 1 + \frac{\delta}{j} \right)^{sj}. 
\end{equation*} It remains to check that 
\begin{equation*}
\frac{(1 + \eps)^{s(j-1)}}{ (j-1)^{s(j-1)} } \tilde{\eps}^{-s(j-1)} \delta^{sj} j^{?sj}\leq \frac{\delta^s}{j^s},
\end{equation*} which is $(1 + \frac{\delta}{j})^{-s(j-1)} \leq 1$, which is true since $j\geq 1$. So this allows to prove (\ref{eq:seminorms estimate induction}). \par

As a consequence, the sequence of symbols $(a_j)_{j\in \N}$ defined by (\ref{eq:stationary equations}) is a formal $\cG^s$ symbol as desired.

\end{proof}

\par 
\vspace{0.5em}
\textbf{Step 5. Conclusion.}

Thanks to Lemma \ref{lem:transport recursif}, there exists a sequence of symbols $(a_j)_{j\in \N}$ defined by (\ref{eq:stationary equations}) of order zero. Using Theorem \ref{thm:Carleson} we deduce that there exists $a \in \cS_s^m$ realising $(a_j)_{j\in \N}$. As a consequence, the $h$-FIO associated to the symbol $a$ and the phase $\varphi$ satisfying (\ref{eq:Thm2 eikonal}) satisfies that $PF = F(hD_{x_1})$ microlocally near zero up to negligeable remainders. This ends the proof of Theorem \ref{thm:WKB}.

\section{A Gevrey FBI version of Egorov's theorem: Proof of Theorem \ref{thm:Egorov}}

Let us describe a Gevrey$-s$ transform of the type 
\begin{equation}
\mathcal{T}u(x,h) = \int e^{\frac{i}{h}\varphi(x,y)} a(x,y,h) u(y) dy, \qquad u \in \mathcal{S}', 
\label{eq:FBI}
\end{equation} with some phase $\varphi$ of Gevrey class.

\subsection{Some complex symplectic geometry}

For $n \in \N$, we use coordinates $z = (z_1,\dots,z_n)$ for a point $z\in \CC^n$. As usual, we note $z_j = x_j + i y_j$, for $x_j,y_j \in \R$ and $j=1,\dots,n$, so that the holomorphic and antiholomorphic derivatives write

\begin{equation*}
\frac{\partial}{\partial z_j} = \frac{1}{2} \left( \frac{\partial}{\partial x_j} - i  \frac{\partial}{\partial y_j} \right), \qquad \frac{\partial}{\partial \overline{z}_j} = \frac{1}{2} \left( \frac{\partial}{\partial x_j} + i  \frac{\partial}{\partial y_j} \right).
\end{equation*} Let us denote  
\begin{equation*}
\cB = \left\{ \frac{\partial}{\partial z_j}, \frac{\partial}{\partial \overline{z_j}}  \right\}_{j=1,\dots,n}, \qquad \cB^* =   \left\{ \ud z_1, \dots, \ud z_n, \ud \overline{z}_1, \dots \ud \overline{z}_n \right\},
\end{equation*} where $\cB$ is the canonical complex basis of $\CC^n$ and $\cB^*$ is the associated basis of 1-forms. As usual, the canonical volume form is
\begin{equation*}
\ud z \wedge \ud \overline{z} = \left( \frac{2}{i} \right)^n \ud m(z),
\end{equation*} where $\ud m $ is the Lebesgue measure and 
\begin{equation*}
\ud z = \ud z_1 \dots \ud z_n, \qquad \ud \overline{z} = \ud \overline{z}_1 \dots \ud \overline{z}_n, 
\end{equation*}

If $\sigma$ be the canonical symplectic form on $\R^n \times \R^n$, given by 
\begin{equation*}
\sigma = \ud \xi \wedge \ud x. 
\end{equation*} the canonical form $\sigma_{\comp}$ in $\comp^n \times \comp^n$ such that
\begin{equation*}
\sigma_{\comp} = \ud \zeta \wedge \ud \eta,
\end{equation*} is an extension of $\sigma$ satisfying 
\begin{equation*}
\Re \sigma_{\comp} = \ud \xi \wedge \ud x - \ud \eta \wedge \ud y, \qquad \Im \sigma_{\comp} = \ud \xi \wedge \ud y + \ud \eta \wedge \ud x. 
\end{equation*} Recall that a submanifold $\Lambda \subset \comp^n \times \comp^n$ is totally real if $\Lambda \cap i \Lambda = \left\{ 0 \right\}$,  
 I-Lagrangian if $\Im \sigma_{\comp} \vert_{\Lambda} = 0 $ and R-Lagrangian if $\Re \sigma_{\comp} \vert_{\Lambda} = 0 $. If $A \in \cM_{n\times n}(\CC)$ and $f : \comp^n \rightarrow \comp$ is a quadratic function given by 
\begin{equation*}
f(z) = \frac{1}{2} \langle Az, z \rangle, \qquad z \in \comp^n,
\end{equation*} then the submainfold $\Lambda_f \subset \comp^n \times \comp^n$ defined by 
\begin{equation*}
\Lambda_f = \left\{ \left(z,\frac{\ud f}{\ud z}(z)  \right); \, z \in \comp^n   \right\}
\end{equation*} is Lagrangian for $\sigma_{\comp}$ (i.e., both I-Lagrangian and R-Lagrangian).

\subsubsection{FBI transforms}

Let $ \varphi = \varphi(z,x)$ be a holomorphic quadratic function on $\comp^n \times \comp^n$ satisfying that 
\begin{equation*}
\Im \left( \frac{\partial^2 \varphi}{\partial x^2} \right) \textrm{ is positive definite} \qquad \textrm{and} \qquad \det \left( \frac{\partial^2 \varphi}{\partial x \partial z} \right) \not = 0.
\end{equation*} Let $\kappa_{\varphi}$ the associated complex transformation $\kappa_{\varphi}: \CC^n \times \CC^n \rightarrow \CC^n \times \CC^n$ defined implicitly by 
\begin{equation*}
\kappa_{\varphi} \left(x, - \frac{\partial \varphi}{\partial x}(z,x)  \right) = \left( z, \frac{ \partial \varphi}{\partial z}(z,x)   \right), \qquad z,x \in \comp^n. 
\end{equation*} According to (\cite{Zworski}), the transformation $\kappa_{\varphi}$ is canonical for $\sigma_{\CC}$. The FBI operator associated to $\varphi$ is 
\begin{equation}
\cT_{\varphi} u(z) = \frac{c_{ \varphi}}{h^{\frac{3n}{4}}} \int_{\R^n} e^{\frac{i}{h} \varphi(z,x)} u(x)  \ud x, \qquad u \in \mathscr{S}(\R^n), 
\label{eq:FBI definition}
\end{equation} where 
\begin{equation*}
c_{\varphi} := \frac{\vert  \det \frac{\partial^2 \varphi}{\partial x \partial z}    \vert}{2^{\frac{n}{2}}  \pi^{\frac{3n}{4}}   \vert \det \Im \frac{\partial^2 \varphi}{\partial x^2} \vert^{\frac{1}{4}}  }.
\end{equation*} For the particular choices 
\begin{equation*}
\varphi_0(z,x) = \frac{i}{2} (z-x)^2, \qquad \Phi_0(z) = \frac{1}{2} \vert \Im z \vert^2, \qquad z,x\in \comp.
\end{equation*} we call $\cT_0$ the associated FBI transform defined by (\ref{eq:FBI definition}), which is called Bargman transformation.

We state the following corollary. 

\begin{cor}[Gevrey singular solutions] Instead of performing directly a WFB construction on $P$ we prefer to conjugate $P$ to $hD_1$ to solve the Cauchy problem 
\begin{equation*}
\left\{ \begin{array}{l}
 h D_1 u = 0, \\
  u\vert_{x_1 = 0} = u_0,
\end{array} \right.
\end{equation*} for which obviously we have $u = u_0 \otimes 1_{x_1}$.
\end{cor}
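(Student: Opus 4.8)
The statement to be proved is the Corollary ``Gevrey singular solutions'', which asserts that instead of performing a WKB construction directly on $P$, one conjugates $P$ to $hD_{x_1}$ and solves the trivial Cauchy problem $hD_{x_1}u = 0$, $u|_{x_1=0} = u_0$, whose solution is $u = u_0 \otimes 1_{x_1}$. The plan is to assemble this statement from Theorem \ref{thm:Egorov1} and Theorem \ref{thm:WKB}, which have already been proved. First I would invoke Theorem \ref{thm:Egorov1}: since $P$ is of real principal type at $(x_0,\xi_0)$, there exist $\cG^s$ FIOs $F,G$ (microlocal inverses of each other near $(x_0,\xi_0)$) such that $GPF = hD_{x_1} + \cO_s(h^\infty)$ microlocally near $(x_0,\xi_0)$, as established in Step 3--Step 5 of Section \ref{sec:ProofEgorov}.

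Next I would transport the Cauchy data along this conjugation. Given $u_0$, set $v_0 := $ (the appropriate restriction/pullback of $G^{-1}u_0$ to the transversal hypersurface) so that the solution of the model Cauchy problem $hD_{x_1}v = 0$, $v|_{x_1=0} = v_0$ is simply $v = v_0 \otimes 1_{x_1}$, the tensor product of $v_0$ in $x'$ with the constant function $1$ in $x_1$ — this is literally obvious since $hD_{x_1}(v_0\otimes 1_{x_1}) = 0$. Then $u := Fv = F(v_0\otimes 1_{x_1})$ is, microlocally near $(x_0,\xi_0)$, a solution of $Pu = \cO_{\cG^s}(h^\infty)$ with the prescribed data, because $PF = F(hD_{x_1}) + \cO_s(h^\infty)$ and $F$ maps the model solution to an honest Lagrangian distribution in the class $\cI^m_s$; the Gevrey smallness of the remainder is exactly the content of the $\cO_{\cG^s}(h^\infty)$ bound from (\ref{eq:Gevrey small}), inherited through Proposition \ref{prop:composition FIO pdo} and the non-stationary phase Lemma \ref{prop: Proof Non-stationary Phase}. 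The wavefront statement — that $\WFsh(u)$ lies on the Lagrangian $\Lambda$ generated by $\varphi$ — follows from the Lagrangian distribution lemma at the end of Section \ref{sec:FIO}.

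The remaining point is to check compatibility of the initial conditions: one must verify that $u = Fv$ restricted to $S$ indeed recovers $u_0$ (or a prescribed elliptic multiple of it), which uses that the symbol $a$ of $F$ was chosen elliptic at $(x_0,\xi_0)$ and that $\varphi|_{x_1=0} = x'\cdot\eta'$ so $F|_{x_1=0}$ is, microlocally, an elliptic $\cG^s$ pdo; this is precisely the content of the boundary condition $a|_S = a_0$ in Theorem \ref{thm:WKB}. So the corollary is really a restatement of Theorem \ref{thm:WKB} phrased in terms of solving $Pu = \cO_{\cG^s}(h^\infty)$ rather than the conjugation identity, and its proof is the observation that the WKB ansatz $a e^{i\varphi/h}$ of Theorem \ref{thm:WKB} is exactly $F$ applied to the trivial solution $a_0 \otimes 1_{x_1}$ of the model problem.

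\textbf{Main obstacle.} The only delicate step is bookkeeping the Gevrey-$s$ order of the remainder: one must make sure that applying the FIO $F$ to $v_0\otimes 1_{x_1}$ — which is not compactly supported in $x_1$ — still yields a remainder of the form (\ref{eq:Gevrey small}), i.e. that the microlocal cutoffs near $(x_0,\xi_0)$ genuinely localize the construction so that all the non-stationary and stationary phase estimates of Section \ref{sec:Stationary} apply with uniform constants. This is handled by the same partition-of-unity and Kuranishi-type arguments used in the proof of Theorem \ref{thm:Egorov1}, so no new difficulty arises beyond careful tracking of constants.
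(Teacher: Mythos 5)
Your proposal is correct and follows the same route the paper intends: the corollary is exactly the observation that, once Theorem \ref{thm:Egorov1} conjugates $P$ to $hD_{x_1}$ by elliptic $\cG^s$ FIOs, the Cauchy problem reduces to the trivial model one with solution $u_0\otimes 1_{x_1}$, which is then transported back by $F$ to give solutions of $Pu=\cO_{\cG^s}(h^\infty)$ whose Gevrey wavefront lies along the null bicharacteristic through $(x_0,\xi_0)$. The paper offers no more than this one-line argument (plus the remark on choosing $u_0$ with $\WFh$ reduced to a point), so your more detailed bookkeeping of the data transport, ellipticity at $x_1=0$, and microlocal cutoffs is a faithful, if fuller, rendering of the same proof.
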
 Choosing a family $(u_0(x',h))$ with a Grevey $\WFh$ set reduced to one point in $T^*\RR^{n-1}$ we obtain a family $(u(x,h))$ of solutions to $Pu=0$ with a Gevrey $\WFh$ set reduced to a segment of the null bicharachteristic strip of $P$ through $(x_0,\xi_0)$.

\subsection{WKB Heuristics}

Set formally
\begin{equation*}
\mathcal{T}u(x,h) = \int e^{\frac{i}{h}\varphi(x,y)} a(x,y,h) u(y) dy. 
\end{equation*} The condition 
\begin{equation*}
hD_{\Re x_1} \mathcal{T}u - h^m \mathcal{T} P(y,D_y) u \sim 0 \qquad \textrm{in }G^{2s-1},
\end{equation*} is equivalent to solving an \textbf{eikonal} equation for the phase $\varphi$ of the form 
\begin{equation*}
\varphi_{x_1}'(x,y) = p(y, - \varphi_y'(x,y))
\end{equation*} and a transport equation for the symbol $a = a(x,y,h)$ of the form
\begin{equation*}
P'a = e^{-\frac{i}{h}\varphi } \left(  (hD_{\Re x_1}  - h^m P^t(y,D_y))(a e^{\frac{i}{h}\varphi})  \right) = 0, \quad \textrm{in } \cO_{s}(h^{\infty}),\textrm{close to }(x_0,y_0),
\end{equation*}

\subsection{Construction of the canonical transformation $\kappa$}

\begin{prop} Let $P = P(y, D_y)$ be a $G^s-$differential operator of degree $m$ such that near $(y_0,\eta_0) \in T^* \RR^n \setminus \left\{ 0 \right\}$, the symbol $p(y,\eta)$ is of real principal type. Then, there exists a canonical transform $\kappa$ such that  
\begin{equation*}
\kappa(y_0,\eta_0) = (x_0, \xi_0), \qquad \textrm{where} \quad \xi_0 = \frac{2}{i} \partial_x \phi(x_0).
\end{equation*} Moreover, if $y = \pi \circ\kappa^{-1}$, the submanifold defined by
\begin{equation*}
\Gamma = \left\{ (x,y(x)) \in \CC^n \times \RR^n; \,\, x \textrm{ close to } x_0    \right\}
\end{equation*} is totally real in $\CC^n \times \CC^n$ and of maximal dimension $2n$. 
\label{prop:kappa}
\end{prop}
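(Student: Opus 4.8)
The plan is to construct $\kappa$ as a composition of two canonical transformations: first, the real canonical transformation coming from the eikonal structure of the real principal type hypothesis, and second, the complex canonical transformation $\kappa_\varphi$ associated to the FBI phase $\varphi$. More precisely, since $p(y,\eta)$ is of real principal type at $(y_0,\eta_0)$, with $\partial_\eta p(y_0,\eta_0) \neq 0$ (a consequence of $dp \neq 0$ together with the reality of $p$, after possibly permuting coordinates), there is a homogeneous real canonical transformation $\chi$ defined near $(y_0,\eta_0)$ straightening $p$ to the model symbol $\xi_1$; this is the classical normal form argument invoked already in Step~1 of the proof of Theorem~\ref{thm:Egorov1} (cf. \cite[Section 12.2]{Zworski}), and one checks it can be taken of class $\cG^s$ because $p \in \cS^m_s$ and the Hamilton flow of a $\cG^s$ symbol is $\cG^s$ by Cauchy--Kovalevskaya-type estimates in the Gevrey class. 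Composing $\chi$ with the FBI canonical transformation $\kappa_\varphi$ from the FBI phase $\varphi$ of \eqref{eq:FBI definition}, we obtain a canonical map $\kappa := \kappa_\varphi \circ \chi$ from a neighbourhood of $(y_0,\eta_0)$ in $T^*\RR^n$ into $\CC^n \times \CC^n$, and we set $(x_0,\xi_0) := \kappa(y_0,\eta_0)$. By the very definition of $\kappa_\varphi$, the image of the real plane $\{\eta = -\partial_x\varphi(z,x)\}$ under $\kappa_\varphi$ lands on $\Lambda_{\Phi_0}$ where $\xi = \frac{2}{i}\partial_z\Phi_0(z)$, which yields the stated relation $\xi_0 = \frac{2}{i}\partial_x\phi(x_0)$.

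Next I would verify that $\kappa$ is canonical for $\sigma_\CC$: this is immediate since $\chi$ is canonical for the real form $\sigma$ (hence for $\sigma_\CC$ as a map between real Lagrangian graphs after complexification) and $\kappa_\varphi$ is canonical for $\sigma_\CC$ by the cited result from \cite{Zworski}; a composition of canonical maps is canonical. The relation $\kappa(y_0,\eta_0) = (x_0,\xi_0)$ with $\xi_0 = \frac{2}{i}\partial_x\phi(x_0)$ then follows by unwinding the normalizations at the base point.

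The second assertion — that $\Gamma = \{(x, y(x)) : x \text{ close to } x_0\}$, where $y = \pi \circ \kappa^{-1}$, is totally real of maximal dimension $2n$ in $\CC^n \times \CC^n$ — is the heart of the argument. Here $y(x)$ is the base-space component of $\kappa^{-1}$ evaluated along the appropriate Lagrangian. I would argue as follows. The graph $\Gamma$ has real dimension $2n$ tautologically (it is parametrized by $x \in \CC^n \cong \RR^{2n}$ near $x_0$), so only total reality, $\Gamma \cap i\Gamma = \{0\}$, remains. This is where the positivity hypothesis $\Im(\partial^2_x\varphi) > 0$ on the FBI phase enters decisively: the FBI transform $\kappa_\varphi$ carries the real cotangent space $T^*\RR^n$ to an I-Lagrangian, R-symplectic submanifold $\Lambda_{\Phi_0}$ on which the restriction of the ``fibre'' coordinate is controlled by the strictly positive-definite weight $\Phi_0$, and this strict positivity is exactly the infinitesimal condition that forces the tangent space $T_p\Gamma$ to intersect $i\, T_p\Gamma$ trivially at every point $p$. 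Concretely, one computes the tangent space of $\Gamma$ at $(x_0, y_0)$ from the Jacobian of $\kappa^{-1}$ and checks that its complexification is all of $\CC^n \times \CC^n$, using that $\det(\partial^2_{x z}\varphi) \neq 0$ ensures $\kappa_\varphi$ is a local diffeomorphism and $\Im(\partial^2_x\varphi) > 0$ ensures the imaginary part of the generating data is non-degenerate; this is precisely the mechanism recorded in the discussion preceding \eqref{eq:FBI definition} (cf. \cite{Zworski} and \cite{HitrikLascarSjostrandZerzeri}).

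The main obstacle I anticipate is twofold. First, keeping the regularity bookkeeping honest: one must ensure that the straightening canonical transformation $\chi$ can genuinely be realized in the $\cG^s$ class (not merely $C^\infty$ or analytic), which requires Gevrey estimates on the Hamilton flow of $p$ and on the solution of the associated eikonal equation — this is available in the Gevrey literature but should be cited carefully, e.g. via the Gevrey Cauchy--Kovalevskaya framework underlying \cite{BoutetKree,LascarLascar}. Second, and more delicate, is verifying total reality uniformly in a neighbourhood rather than just at the base point: total reality at $(x_0,y_0)$ is an open condition, so it persists on a neighbourhood, but one wants this neighbourhood to match the one on which the FBI phase and the symbol $a$ in \eqref{eq:FBI} are controlled, which forces one to track the interplay between the size of the coordinate patch for $\chi$ and the quadratic (or near-quadratic) structure of $\varphi$. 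I would handle this by first proving total reality at the base point via the explicit linear-algebra computation above, then invoking openness together with the fact that all constructions are local around $(y_0,\eta_0)$, shrinking the neighbourhood once at the end if needed.
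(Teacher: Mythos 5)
Your proposal is correct and takes essentially the same route as the paper: there, too, one sets $\kappa = K_{\mathcal{T}_0}\circ\chi$ with $\chi$ a real $\cG^s$ canonical map (via the Darboux lemma) sending $p$ to $\eta_1$ and $K_{\mathcal{T}_0}$ the canonical transformation of the Bargmann transform, and one defines $y(x)$ by restricting $\kappa^{-1}$ to $\Lambda_{\Phi_0}$, i.e. $(y(x),\eta(x))=\kappa^{-1}\bigl(x,\tfrac{2}{i}\partial_x\Phi_0(x)\bigr)$. The only cosmetic difference is that the paper fixes the quadratic Bargmann phase $\varphi_0=\tfrac{i}{2}(x-y)^2$ (so that $\xi\vert_{\Lambda_{\Phi_0}}$ is real) and deduces total reality of $\Gamma$ from the surjectivity of $\partial y$ and the bijectivity of $\overline{\partial}y$, which is precisely the tangent-space linear-algebra check you outline.
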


\begin{proof}

Let $\chi$ be a real $G^s$ canonical map defined near $(y_0,\eta_0)$ mapping $p(y,\eta)$ to $\eta_1$. Such a $\chi$ can be obtained by applying Darboux Lemma. \par 
Now let $\mathcal{T}_0$ be the Bargman transform. $\mathcal{T}_0$ has phase functions $\varphi_0(x,y) = \frac{i}{2}(x-y)^2$, and $\Phi_0(x) = \frac{1}{2} (\Im x)^2$. \par 

Since $\xi \vert_{\Lambda_{\Phi_0}}$ is real, we will choose $\kappa = K_{\mathcal{T}_0} \circ \chi$. $\kappa$ is parametrized by $\in \CC^n$ close to $x_0 \in (0,y_0' - i \eta_0')$, setting 
\begin{equation*}
(y(x),\eta(x)) = \kappa^{-1}\left( x, \frac{2}{i} \partial_x \Phi_0(x) \right).
\end{equation*} The map $x \in \CC^n \mapsto y(x) \in \RR^n$ is of class $G^s$ and 
\begin{equation*}
\partial y : T_x \CC^n \rightarrow T_{y(x)}\RR^n \qquad \textrm{is surjective}.
\end{equation*} Moreover, $\overline{\partial} y$ is biejctive since $dx$ is canonical. \par 

Setting now 
\begin{equation*}
\Gamma = \left\{ (x,y(x)) \in \CC^n \times \RR^n; \,\, x \textrm{ close to } x_0    \right\}
\end{equation*} we get that $\Gamma$ is totally real in $\CC^n \times \CC^n$ and of maximal dimension $2n$.

\end{proof}

\subsection{Construction of the phase $\varphi$ on $\Gamma$}

In this section we obtain and solve an eikonal equation for the phase $\varphi$. The condition 
\begin{equation*}
hD_{\Re x_1} \mathcal{T}u - h^m \mathcal{T} P(y,D_y) u \sim 0 \qquad \textrm{in }G^s,
\end{equation*} is equivalent to solving an \textbf{eikonal} equation for the phase $\varphi$. We first construct $\varphi$ and $a$ on $\Gamma$. Next, we use an extension argument providing a (formal) sequence $(a_j)_{j\geq 0}$ of Gevrey symbols. We conclude by Carleson's moment method, which allow to construct a Gevrey symbol $a$ on $\CC^n \times \RR^n$.

\begin{prop}[Eikonal equation]
There exists a Gevrey-$s$ phase $\varphi = \varphi(x,y)$ satisfying
\begin{equation*}
\varphi_{x_1}'(x,y) = p(y, - \varphi_y'(x,y)) \qquad \textrm{ on } \Gamma
\end{equation*} for $\Gamma$ is given in Proposition \ref{prop:kappa}. Moreover, $\varphi_{y}(x_0,y_0)=-\eta_0 \in \RR^n$, $\operatorname{Im}\varphi''_{y}>0$, 
$\operatorname{det}\varphi''_{x,y}\neq0$
\label{prop:phase varphi}
\end{prop}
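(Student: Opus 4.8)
The plan is to construct $\varphi$ on the totally real manifold $\Gamma$ by the method of characteristics for the complexified Hamilton--Jacobi equation $\varphi_{x_1}'(x,y) = p(y, -\varphi_y'(x,y))$, exploiting the fact that on $\Gamma$ the canonical transform $\kappa$ of Proposition~\ref{prop:kappa} has already straightened $p$ to $\xi_1$. First I would recall that by construction $\kappa = K_{\mathcal{T}_0}\circ\chi$ with $\chi$ a real Gevrey canonical map sending $p(y,\eta)$ to $\eta_1$; so the bicharacteristic flow of $p$ in the $(y,\eta)$ variables is conjugated via $\chi$ to translation in the first variable, and via $K_{\mathcal{T}_0}$ to the flow generated by $\Re x_1$ on $\Lambda_{\Phi_0}$. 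The phase $\varphi$ is then obtained as the generating function of $\kappa$ restricted to $\Gamma$: concretely, one sets $\varphi(x,y)$ so that its differential along $\Gamma$ encodes the relation $\eta(x) = -\varphi_y'(x,y(x))$ and $\frac{2}{i}\partial_x\Phi_0(x) = \varphi_x'(x,y(x)) + \text{(correction from }\partial y)$, integrating the closed $1$-form $\eta\,dy$ pulled back to $\Gamma$ (closedness being exactly the I-Lagrangian/R-Lagrangian property of the relevant graph, i.e. $\kappa$ canonical).

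Next I would verify the three side conditions. The normalization $\varphi_y'(x_0,y_0) = -\eta_0$ follows from $\kappa(y_0,\eta_0) = (x_0,\xi_0)$ together with the defining implicit relation for the generating function, by evaluating at the base point. The positivity $\operatorname{Im}\varphi_{yy}'' > 0$ near $(x_0,y_0)$ comes from the model case: for the Bargman phase $\varphi_0(x,y) = \frac{i}{2}(x-y)^2$ one has $\operatorname{Im}\varphi_{0,yy}'' = \operatorname{Im}(i) = 1 > 0$, and since $\chi$ is real and $\kappa$ is a small perturbation of the composition by a real canonical map, the Hessian in $y$ stays a small perturbation of the model Hessian, hence remains positive definite on a neighborhood of $(x_0,y_0)$ --- this is the standard argument that FBI-type phases have positive-definite imaginary Hessian after a real canonical change of variables. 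The nondegeneracy $\det\varphi_{xy}'' \ne 0$ is equivalent to $\det S_{x,\eta}'' \ne 0$ in the FIO formalism and follows from $\overline\partial y$ being bijective (stated in Proposition~\ref{prop:kappa}, coming from $dx$ canonical), which guarantees that $\Gamma$ projects diffeomorphically and the mixed Hessian is invertible.

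For the Gevrey regularity of $\varphi$: the Hamiltonian $p$ is Gevrey-$s$ by hypothesis, the canonical map $\chi$ can be taken Gevrey-$s$ (Darboux lemma preserves the Gevrey class when applied to Gevrey data), and $K_{\mathcal{T}_0}$ is analytic; solving the Hamilton--Jacobi equation by integrating the characteristic ODE system preserves Gevrey-$s$ regularity because the flow of a Gevrey-$s$ vector field is Gevrey-$s$ in time and initial data, and integration in one variable does not worsen the class. Thus $\varphi \in \cG^s$ near $(x_0,y_0)$.

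The main obstacle I expect is the bookkeeping of the complex/real structure: $\Gamma$ sits inside $\CC^n\times\RR^n$ (not $\RR^n\times\RR^n$), so "solving the eikonal equation on $\Gamma$" means solving it along a totally real submanifold where $x$ is genuinely complex while $y$ is real, and the equation $\varphi_{x_1}'(x,y) = p(y,-\varphi_y'(x,y))$ mixes a complex $x_1$-derivative with the real $y$-variable. Making precise in what sense $\varphi$ and the equation extend off $\Gamma$ --- and checking that the extension is compatible with the almost-holomorphic structure needed later for the FBI transform and for the $\cG^{2s-1}$ loss in Theorem~\ref{thm:Egorov} --- is the delicate point; everything else is a transcription of the classical WKB/Maslov construction of generating functions of canonical transformations, kept in the Gevrey class by the regularity of the flow.
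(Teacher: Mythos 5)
Your construction is essentially the paper's: $\varphi$ is taken as a generating function of $\kappa$ along $\Gamma$ (prescribing $\varphi_x'=\xi$, $\varphi_y'=-\eta$), obtained by integrating a $1$-form whose closedness is exactly the canonicity of $\kappa$, and the eikonal equation then follows from the straightening $\xi_1(x)=-\Im x_1=p(y(x),\eta(x))$ on $\Lambda_{\Phi_0}$, with Gevrey regularity inherited from $\chi$ and the analytic Bargman factor. The only real divergence is in the side conditions: the paper secures $\Im\varphi''_{yy}>0$ and $\det\varphi''_{x,y}\neq 0$ by simply imposing the explicit normalization $\varphi(x,y)=\frac{i}{2}(x'-y')^2-y_{1,0}\eta_{1,0}+iC(y_1-y_{1,0})^2$ with $\Re C>0$, whereas you argue them from the Bargman model; that is acceptable, but you should replace the ``small perturbation'' phrasing by the exact statement that strict positivity of the imaginary Hessian is preserved under composition with a \emph{real} canonical transformation ($\chi$ need not be close to the identity).
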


\begin{proof}
Let us choose $\varphi$ such that
\begin{equation*}
\frac{\partial \varphi}{\partial x} = \xi, \qquad \frac{\partial \varphi}{\partial y} = -\eta.
\end{equation*} Observe that, since $\kappa$ is canonical, the $1-$form
\begin{equation*}
\omega = \left(  \xi(x) - \frac{\partial y^t}{\partial x} \eta(x)  \right) dx - \frac{\partial y^t}{\partial \overline{x}} \eta(x) d\overline{x}
\end{equation*} is closed for $x$ close to $x_0$. Now, $\varphi$ defined above satisfies 
\begin{equation*}
\varphi_{x_1}' = \xi_1(x) = - \Im x_1 = p(y(x), \eta(x)) \qquad \textrm{and} \qquad \eta(x) = - \varphi_y'(x,y(x)).
\end{equation*} In order to get an FBI transform we may impose further 
\begin{equation*}
\varphi(x,y) = \frac{i}{2}(x'-y')^2 - y_{1,0}\eta_{1,0} + i C(y_1 - y_{1,0})^2, \qquad \Re C > 0.
\end{equation*}
\end{proof}

\subsection{Construction of the symbol $a$ on $\Gamma$}

\begin{prop}[Transport equation]
There exists a symbol $a = a(x,y,h)$ such that 
\begin{equation*}
P'a = e^{-\frac{i}{h}\varphi } \left(  (hD_{\Re x_1}  - h^m P^t(y,D_y))(a e^{\frac{i}{h}\varphi})  \right) = 0, \qquad \textrm{close to }(x_0,y_0).
\end{equation*} Furthermore, $a$ is of class $G^s$ and elliptic at $(x_0,y_0)$. 
\label{prop:symbol a}
\end{prop}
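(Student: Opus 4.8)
The plan is to mimic the transport-equation argument used in the proof of Theorem~\ref{thm:Egorov1} (Step~4 and Lemma~\ref{lem:transport recursif}), but now carried out along the totally real submanifold $\Gamma$ from Proposition~\ref{prop:kappa} with the eikonal phase $\varphi$ of Proposition~\ref{prop:phase varphi}. First I would expand the conjugated operator $P'a = e^{-i\varphi/h}\bigl((hD_{\Re x_1} - h^m P^t(y,D_y))(a e^{i\varphi/h})\bigr)$ into powers of $h$ using the stationary-phase / symbolic-calculus expansion of Proposition~\ref{lemma:composition} (applied along $\Gamma$, where $\partial y$ is surjective and $\overline{\partial}y$ is bijective, so that the relevant Jacobians are nonvanishing near $x_0$). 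Because $\varphi$ solves the eikonal equation $\varphi_{x_1}' = p(y,-\varphi_y')$ on $\Gamma$, the leading $h^0$-term cancels exactly, just as the first term vanished in Step~3; what remains is a first-order linear transport operator $\tfrac{h}{i}\partial_{\Re x_1} + q$ acting on the amplitude, with $q$ a $\cG^s$ symbol of low order coming from subprincipal and curvature contributions.

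Next I would set up the hierarchy of transport equations $\tfrac{h}{i}\partial_{\Re x_1}a_0 + q a_0 = 0$ with $a_0|_{\Re x_1 = 0}$ elliptic (to guarantee ellipticity at $(x_0,y_0)$), and for $j\ge 1$, $\tfrac{h}{i}\partial_{\Re x_1}a_j + q a_j = r_1(q,a_{j-1})$ with $a_j|_{\Re x_1 = 0} = 0$, solved by the same explicit integration formula $a_j = -i\tfrac{a_0}{h}\int_0^{\Re x_1}\tfrac{r_1}{a_0}(q,a_{j-1})\ud t$ as in Lemma~\ref{lem:transport recursif}. The Gevrey control of the sequence $(a_j)_{j\ge 0}$ follows verbatim from the formal-quasinorm estimates (\ref{eq:seminorms estimate})--(\ref{eq:seminorms estimate induction}): one obtains $\overline{N}_{-j}(a_j,T)\le M_0'^{\,j}\eps^{-sj}j^{sj}\overline{N}_0(a_0,2T)T^{-j}$, which is exactly the bound characterizing a formal $\cG^s$ symbol in the sense of Definition~\ref{eq:symbole formel}. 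Finally, Carleson's theorem (Theorem~\ref{thm:Carleson}) realizes $(a_j)_{j\ge 0}$ as a genuine symbol $a\in\cS^0_s$, and since each $\tfrac{h}{i}\partial_{\Re x_1}a_j + q a_j - r_1(q,a_{j-1})$ is handled exactly, the full expansion telescopes to leave $P'a = \cO_{\cG^s}(h^\infty)$ near $(x_0,y_0)$, with $a$ elliptic there because $a_0$ was chosen elliptic on the initial hypersurface.

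The main obstacle I anticipate is the bookkeeping of the symbolic calculus on $\Gamma$: unlike in Theorem~\ref{thm:Egorov1}, here the phase $\varphi$ is complex (with $\Im\varphi_y''>0$) and the operator $P^t(y,D_y)$ is composed through an FBI-type kernel, so one must justify that the formal expansion of $e^{-i\varphi/h}P^t(a e^{i\varphi/h})$ along the totally real $2n$-dimensional submanifold $\Gamma$ still produces a polyhomogeneous series in $h$ whose coefficients are differential operators in the amplitude, with the curvature/subprincipal term $q$ of sufficiently low order that the smallness requirement on $m_0$ in Lemma~\ref{lem:transport recursif} (namely $m_0\le -n$, achievable after the same preliminary FIO conjugation as in Step~4) is met. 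Once that expansion is in place, the Gevrey estimates are a direct transcription of the quasinorm induction already established, so no genuinely new analytic difficulty arises; the work is in verifying that the geometry of $\Gamma$ (surjectivity of $\partial y$, bijectivity of $\overline{\partial}y$) indeed makes the restriction and the non-stationary phase lemma (Lemma~\ref{prop: Proof Non-stationary Phase}) applicable to the off-diagonal remainders.
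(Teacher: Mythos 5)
There is a genuine gap: your proposal bypasses the central point of the paper's argument, which is geometric. The paper computes the symbol $p'(x,y,\xi,\eta)=\Re\xi_1+\varphi_{x_1}'-p(y,-\eta-\varphi_y')$ of $P'$, notes that the eikonal equation kills $p'$ on $\Gamma$ at $\xi=\eta=0$, and then shows that the resulting transport vector field
$\mathcal{V}'=a\,\partial_x+\overline a\,\partial_{\overline x}+b\,\partial_y+\overline b\,\partial_{\overline y}$ with $a=e_1$, $b=\frac{\partial p}{\partial\eta}(y,-\varphi_y')$, is \emph{tangent to the totally real submanifold} $\Gamma$; this is verified through the symplectic matrix identities $\frac{\partial y}{\partial x}=\frac12(P-iQ)$, $\frac{\partial y}{\partial\overline x}=\frac12(P+iQ)$ and $Pe_1=\frac{\partial p}{\partial\eta}(y,-\varphi_y')$ coming from $\chi'H_p=H_{\eta_1}$. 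Your proposal never addresses this tangency, yet without it the construction collapses: the transport operator here is not $\frac{h}{i}\partial_{\Re x_1}+q$ (a family of ODEs in $\Re x_1$, solvable by the explicit formula of Lemma \ref{lem:transport recursif}) but the genuine vector field $v'=\partial_{\Re x_1}+\frac{\partial p}{\partial\eta}(y,-\varphi_y')\partial_y+\overline{\frac{\partial p}{\partial\eta}}(y,-\varphi_y')\partial_{\overline y}$ plus lower-order terms, and the hierarchy must be solved along its integral curves \emph{on} $\Gamma$ and then extended almost holomorphically; the curves stay in $\Gamma$ precisely because of the tangency you omit. The quantities you invoke (surjectivity of $\partial y$, bijectivity of $\overline\partial y$) enter only in the construction of $\Gamma$, not as a substitute for this verification, and the integration formula $a_j=-i\frac{a_0}{h}\int_0^{\Re x_1}\cdots$ does not solve the equations that actually arise.

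A second, related error is your claim that the Gevrey control is ``a direct transcription of the quasinorm induction'' and yields $P'a=\cO_{\cG^s}(h^\infty)$. Proposition \ref{lemma:composition} concerns real phases $e^{-\frac{i}{h}z\cdot\zeta}$ and cannot be applied ``along $\Gamma$'': here $\varphi$ is complex with $\Im\varphi_y''>0$, the data are only $\cG^s$ (so admit only almost-holomorphic extensions with $\overline\partial$-errors of size $\exp(-c|\Im Z|^{-1/(s-1)})$), and the expansion of $e^{-\frac{i}{h}\varphi}P^t(ae^{\frac{i}{h}\varphi})$ must be obtained by the Melin--Sj\"ostrand complex stationary phase with a Stokes deformation of the contour, as in Lemma \ref{lem:MelinSjostrandGevrey}. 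That analysis is where the real work lies, and it produces remainders that are only $\cG^{2s-1}$-small, not $\cG^s$-small --- which is exactly why Theorem \ref{thm:Egorov} is stated with $\cO_{\cG^{2s-1}}(h^\infty)$. So the ``no genuinely new analytic difficulty'' assessment is not tenable: both the tangency argument and the complex-phase stationary phase with its Gevrey loss are new ingredients that your outline does not supply.
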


\begin{proof}
As $P'$ is a $h-PDO$ of degree $0$ with symbol 
\begin{equation*}
p'(x,y,\xi,\eta) = \Re \xi_1 + \varphi_{x_1}' - P\varphi(x,y,\eta), 
\end{equation*} where 
\begin{equation*}
P\varphi(x,y,\eta) = p(y,-\eta - \varphi_y'). 
\end{equation*}

According to Proposition \ref{prop:phase varphi} phase solves
\begin{equation*}
\varphi_{x_1}'(x,y) = p(y,-\varphi_y'(x,y)) \qquad \textrm{ on } \Gamma.
\end{equation*} Then for $(x,y) \in \Gamma$ such that $\xi = \eta = 0$ one has $p'=0$. Moreover, 
\begin{equation*}
a = \frac{\partial p'}{\partial \xi}\vert_{\xi = \eta = 0} = e_1, \qquad b = \frac{\partial p'}{\partial \eta}\vert_{\xi = \eta = 0} = \frac{\partial p}{\partial \eta}(y, -\varphi_y'),\qquad \textrm{on } \Gamma.
\end{equation*} Consider the vector field on $\CC^n \times \CC^n$ defined by 
\begin{equation*}
\mathcal{V}' = a \partial_x + \overline{a} \partial_{\overline{x}} + b \partial_{ y } + \overline{b} \partial_{\overline{y}}.
\end{equation*} One checks that $\mathcal{V}'\vert_{\Gamma}$ is tangent to $\Gamma$ if and only if 
\begin{equation*}
b \in \RR^n \qquad \textrm{and} \qquad  b = \left( \frac{\partial y}{\partial x}\right)a + \left( \frac{\partial y}{\partial \overline{x}}\right) \overline{a}.
\end{equation*} Observe that if $\mathcal{V} =  \alpha \partial_x + \overline{\alpha} \partial_{\overline{x}}$ is a vector field on $\CC^n$ and $u(x) = u'(x,y(x))$, one has $\mathcal{V}'u' = \mathcal{V}u$ on $\Gamma$ if and only if 
\begin{equation*}
\alpha = a, \qquad b = \left( \frac{\partial y}{\partial x}\right)a + \left( \frac{\partial y}{\partial \overline{x}}\right) \overline{a}, \quad \textrm{with} \quad \partial_{\overline{x}} u' = \partial_{\overline{y}}u' = 0 \textrm{ on }\Gamma.
\end{equation*} We fulfill the tangency condition on $\Gamma$ by choosing
\begin{equation*}
b = \frac{\partial p}{\partial \eta} (y, - \varphi_y') \qquad \textrm{and} \qquad a = \frac{\partial p'}{\partial \xi} \vert_{\xi = \eta = 0}. 
\end{equation*} This follows by construction, as by setting 
\begin{equation*}
\chi'H_p = H_{\eta_1} = \begin{pmatrix}  e_1 \\ 0 \end{pmatrix}, \qquad (\chi')^{-1} =  \begin{pmatrix}  P & Q \\ R & S \end{pmatrix} \textrm{(real symplectic matrix)},
\end{equation*} with $P$ invertible, $P^t R$ and $Q^t S$ symmetric and $P^t S - R^t Q = I$, we have
\begin{equation*}
\frac{\partial y}{\partial x} = \frac{1}{2} (P - iQ), \qquad \frac{\partial y}{\partial \overline{x}} = \frac{1}{2} (P + iQ).
\end{equation*} Then, 
\begin{equation*}
\left(  \frac{\partial y}{\partial x}e_1 +  \frac{\partial y}{\partial \overline{x}}e_1   \right) = Pe_1 = \frac{\partial p}{\partial \eta} (y, - \varphi_y'), \qquad \textrm{ on } \Gamma, 
\end{equation*} since by construction
\begin{equation*}
\chi^{-1}(\Re x, - \Im x) = (y(x), \eta(x)).
\end{equation*}

\end{proof}

We will solve transport equations on $\Gamma$ ad extend solutions almost holomorphycally from $\Gamma$. This yields a formal solution $(a_j')_{j\geq 0}$ with $a_j' \in \cS_s^{-j}(\comp^n \times \comp^n)$. \par 

We set 
\begin{equation*}
b' e^{\frac{i}{h}\varphi} = \frac{1}{h} \left(  hD_{\Re x_1} - h^m P^t(y,D_y)   \right)(a e^{\frac{i}{h}\varphi} ).
\end{equation*} The eikonal equation is satisfied since $\varphi_{\Re x_1}' = p(y,-\varphi_y')$. Now set 

\begin{equation*}
e^{\frac{i}{h}\varphi}P_{\varphi}(x,y,hD_y) = h^m P^t (y, D_y)(\cdot e^{\frac{i}{h}\varphi} ), 
\end{equation*} The principal symbol of $P_{\varphi}$, namely $p_{\varphi}$, satisfies
\begin{equation*}
p_{\varphi}|_{\eta = 0} = p(y,-\varphi_y').
\end{equation*} If 
\begin{equation*}
P' = h D_{\Re x_1} + \varphi_{\Re x_1}' - P_{\varphi}(x,y,hD_y)
\end{equation*} we have to solve a WKB problem for $P'$ with $\varphi'=0$, which leads to some expansions. If $Q(y,hD_y)$ is a $hPDO$ of order zero on $\R^n$, if $\varphi$ is a complex phase with $\Im \varphi \geq 0$ and $\ud \varphi \not = 0$ when $\Im \varphi = 0$, one has 
\begin{equation}
Q(a e^{\frac{i}{h}\varphi}) \sim e^{\frac{i}{h}\varphi} \sum_{\alpha \geq 0} \frac{h^{|\alpha|}}{\alpha !} \tilde{q}^{(\alpha)}(y,\varphi_y') D^{\alpha}_z(e^{\frac{i}{h}\rho} a)|_{y=z}, 
\label{eq:MelinSjostrand}
\end{equation} where $\tilde{q}$ is an almost holomorphic extension of $q$ to the whole $\comp^n \times \comp^n$, $a$ is a symbol and $\rho$ is given by 
\begin{equation*}
\rho(y,z) = \varphi(z) -  \varphi(y) - (z-y) \cdot \varphi_y'(y).
\end{equation*} The expansion (\ref{eq:MelinSjostrand}) was obtained by Melin and Sj\"ostrand in \cite{MelinSjostrand}. We deal with the Gevrey case and explain how to adapt the original proofs to compute the remainders in our case. \par 

We deal with phases of the form 
\begin{equation*}
a(y,X) = \varphi(z) + (y-z) \cdot \eta, 
\end{equation*} where we have written $X = (z, \eta) \in \R^{2n}$, $\varphi(z)$ is a phase function in $\cG^s$ defined close to $y_0 \in \R^n$ and complex-valued with $\Im \varphi \geq 0$, $\ud \varphi(y_0) = - \eta_0 \in \R^n \setminus 0$. We shall work close to $X_0 := (y_0,-\eta_0)$ and one has then 
\begin{equation*}
\Im a(y,X) \geq 0, \qquad a_X'(y_0,X_0) = 0, \qquad \det a_{X,X}''(y_0,X_0) \not = 0.
\end{equation*}

We note $u_h(X)$ some $\cG^s$ symbol on $\R^{2n}$ of degree zero compactly supported close to $X_0 = (y_0,\varphi'_y(y_0))$. We have the following lemma. 

\begin{lem}
Under the previous assumptions there is a stationary phase expansion of Gevrey type $2s-1$ with the form 
\begin{equation}
\iint_{\R^n \times \R^n} e^{\frac{i}{h}a(y,X) } u_h(X) \ud X \sim e^{\frac{i}{h} a(y,Z_y)} \sum_{\nu = 0}^{\infty} h^{\nu + n} C_{y,\nu}(D)u_h(Z_y),
\label{eq:MelinSjostrandGevrey}
\end{equation} where $\sim$ means modulo some small $\cG^{2s-1}$ remainders.
\label{lem:MelinSjostrandGevrey}
\end{lem}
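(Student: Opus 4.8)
The statement is a Gevrey-precise version of the Melin–Sjöstrand stationary phase expansion with a complex phase of the form $a(y,X) = \varphi(z) + (y-z)\cdot\eta$, $X=(z,\eta)$. I would begin by recording the three local facts at $X_0 = (y_0,-\eta_0)$ that the hypotheses guarantee: $\Im a(y,X)\ge 0$, $a_X'(y_0,X_0)=0$, and $\det a''_{X,X}(y_0,X_0)\ne 0$. These are exactly the hypotheses under which one has, for each $y$ near $y_0$, a unique (complex) critical point $Z_y = (z(y),\eta(y))$ solving $a_X'(y,Z_y)=0$, depending on $y$ in a $\cG^s$ fashion by the Gevrey implicit function theorem; one computes $a_X'=0$ means $\eta = \varphi'_z(z)$ and $y = z$, so in fact $Z_y = (y,\varphi'_y(y))$ and $a(y,Z_y) = \varphi(y)$, which already explains the leading term $e^{\frac{i}{h}\varphi(y)}$ on the right of \eqref{eq:MelinSjostrandGevrey}.

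**Reduction to a quadratic model.** Next I would perform a holomorphic (almost-holomorphic, since we are Gevrey not analytic) change of variables $X \mapsto W$ straightening the phase: since $a$ is quadratic in $X$ here (it is affine-linear in $\eta$ and $\varphi$ is $\cG^s$ in $z$), Taylor expansion to order two gives $a(y,X) = a(y,Z_y) + \tfrac12\langle Q(y,X)(X-Z_y),X-Z_y\rangle$ with $Q$ symmetric, $\Im$-nonnegative and invertible near $X_0$, exactly as in the proof of the Gevrey stationary phase Lemma already in the paper. Applying the substitution $\tilde X = R^{-1}(y,X)(X-Z_y)$ with $R^t Q R = Q_0 := a''_{X,X}(y_0,X_0)$, $R(X_0)=\mathrm{Id}$, one reduces $\iint e^{\frac{i}{h}a}u_h\,\ud X$ to $e^{\frac{i}{h}\varphi(y)}\iint e^{\frac{i}{2h}\langle Q_0\tilde X,\tilde X\rangle}\tilde u_h(\tilde X)\,\ud\tilde X$ for a new $\cG^s$ symbol $\tilde u_h$ supported near $0$. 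Plancherel then gives $e^{\frac{i}{h}\varphi(y)}\bigl|\det(\tfrac{1}{2i\pi h}Q_0)\bigr|^{-1/2} e^{-\frac{ih}{2}\langle Q_0^{-1}D,D\rangle}\tilde u_h(0)$, and expanding the Gaussian symbol $e^{-\frac{ih}{2}\langle Q_0^{-1}D,D\rangle}$ in powers of $h$ yields the series $\sum_\nu h^{\nu+n} C_{y,\nu}(D)u_h(Z_y)$, where the differential operators $C_{y,\nu}$ of order $2\nu$ are the usual universal polynomials in $Q_0^{-1}$ composed with the pullback under the change of variables; $\nu$-dependence is bookkept explicitly. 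This is purely formal so far and reproduces the classical Melin–Sjöstrand statement; the point of the lemma is the quality of the remainder.

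**The Gevrey remainder — the main obstacle.** The heart of the matter, and where I expect the real work, is showing that the error after truncating the expansion at order $N$ is $\cO_{\cG^{2s-1}}(h^{\infty})$ in the sense of \eqref{eq:Gevrey small}, i.e. bounded by $C^{1+|\beta|}\beta!^{2s-1}\exp(-h^{-1/(2s-1)}/C)$ after $\beta$ derivatives in $y$. There are two competing effects. First, each extra term in the Taylor expansion of $e^{-\frac{ih}{2}\langle Q_0^{-1}D,D\rangle}$ gains a factor $h$ but loses two derivatives falling on $\tilde u_h$, and since $\tilde u_h\in\cS^0_s$ its $k$-th derivatives grow like $C^k(k!)^s$; optimizing the truncation order $N\sim c\,h^{-1/(2s-1)}$ against $h^N N!^{2s-1}$ (the extra $s-1$ in the Gevrey index being precisely the price of differentiating a Gevrey-$s$ symbol an $h$-dependent number of times — this is the standard analytic/Gevrey summation mechanism, cf. the remainder estimates in \cite[7.6.7]{Hormander} combined with the Boutet–Kree-type bounds) produces the subexponential gain $\exp(-h^{-1/(2s-1)}/C)$ with the shifted index $2s-1$. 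Second, and this is the genuinely new point compared with the stationary-phase lemma already in the paper, the change of variables $X\mapsto\tilde X$ straightening the complex phase is only $\cG^s$, not holomorphic, so one must carry along an almost-holomorphic extension of the phase and symbol to $\CC^{2n}$ and estimate the error from $\bar\partial$ of these extensions; on a Gevrey-$s$ function $\bar\partial$(almost-holomorphic extension) is $\cO(\exp(-c|\Im|^{-1/(s-1)}))$, and one deforms the contour of integration into the complex domain by a distance $\sim h^{1/s}$ to absorb this against the oscillation, losing again in the Gevrey index. Managing both losses simultaneously — the truncation loss and the almost-analytic contour-deformation loss — and checking they combine to exactly $\cG^{2s-1}$ (not worse) is the delicate accounting I would expect to dominate the proof. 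I would organize it by: (i) fixing the almost-holomorphic extensions and recording their $\bar\partial$-bounds; (ii) deforming the $\tilde X$-contour and splitting the integral into a "stationary-phase" piece handled by the Plancherel/Gaussian argument above and a "tail" piece handled by the non-stationary phase Lemma~\ref{prop: Proof Non-stationary Phase}; (iii) optimizing $N=N(h)$ in the Gaussian expansion; and (iv) differentiating in $y$, using that $Z_y$ and the extensions depend on $y$ in $\cG^s$ so that $\beta$ derivatives cost an extra $C^{|\beta|}\beta!^s$, consistent with \eqref{eq:Gevrey small} at index $2s-1$ since $2s-1\ge s$.
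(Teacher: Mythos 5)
Your outline follows essentially the same route as the paper's proof, which is precisely the Melin--Sj\"ostrand scheme adapted to the Gevrey setting: almost-holomorphic ($\cG^s$) extensions of the phase and symbol with the $\bar\partial$-bounds $\cO(\exp(-\tfrac1C|\Im Z|^{-1/(s-1)}))$, a deformation of $\R^{2n}$ into complex contours $\Gamma_{y,\sigma}$ via Stokes' formula, Morse-type coordinates reducing to the quadratic phase $\tfrac i2\langle \overline Z,\overline Z\rangle$ plus a small term $\rho$ removed by \cite[Lemma 2.5]{MelinSjostrand}, and a truncated Gaussian expansion whose remainder carries the $N!^{2s-1}$ loss, hence the $\cG^{2s-1}$ index.

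Two points in your sketch would need repair in the execution. First, you cannot run the quadratic-model step ``exactly as in the Gevrey stationary phase lemma already in the paper'': that lemma assumes a real-valued phase with a real critical point, whereas here the critical point $Z_y=(y,\varphi'(y))$ has a genuinely complex $\eta$-component, so the substitution $X\mapsto R^{-1}(y,X)(X-Z_y)$ followed by Plancherel is not a legitimate operation on the real integral; in the paper the Stokes deformation from $\Gamma_{y,1}=\R^{2n}$ to the contour $\Gamma_{y,0}$ through the complex critical point comes \emph{first} and is the essential mechanism, not merely bookkeeping for the non-holomorphy errors. Second, the deformation scale you quote, $\sim h^{1/s}$, is the non-stationary (linear) balance and would only produce a $\cG^s$-type exponent; near the critical point the relevant competition is between the phase gain $e^{-|\Im Z|^2/(Ch)}$ and the $\bar\partial$-bound $e^{-\frac1C|\Im Z|^{-1/(s-1)}}$, balanced at $|\Im Z|\sim h^{(s-1)/(2s-1)}$, which is exactly what makes the Stokes remainder of size $\exp(-\tfrac1C h^{-1/(2s-1)})$ and thus consistent with the $N!^{2s-1}$ truncation loss. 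With these corrections your plan coincides with the paper's argument; the $y$-derivative bookkeeping of your step (iv) is done there by estimating $\partial_y^{\lambda}R(y)$ on $\bigcup_{\sigma\in[0,1]}\Gamma_{y,\sigma}$ with the choices $N=\ell$, $M=2\ell(s-1)$, yielding the bound $C^{1+|\lambda|}\lambda!^{2s-1}\exp(-\tfrac1C h^{-1/(2s-1)})$.
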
 Above we have noted $a(y,Z)$ an almost holomorphic extension of $a(y,X)$ to the whole $\comp^{2n}$ of the same class as $a$ and similarly for $u_h$. Moreover, $y \mapsto Z_y$ is the function defined by $a_Z'(y,Z) = 0$ close to $X_0$ with $Z_{y_0} = X_0$.

\begin{proof}

We follow closely \cite{MelinSjostrand} and use Morse type coordinates. We also use the Stokes formula and deform $\R^{2n}$ into complex contours in $\comp^{2n}$. \par 

The equations 
\begin{equation*}
a_Z'(y,Z) = 0, \qquad Z_{y_0} = X_0
\end{equation*} defines a $\cG^s$ function since $\det a_{Z,Z}''(y_0,X_0) \not = 0$. By Lemma \cite[Lemma 2.1]{MelinSjostrand} one has 
\begin{equation*}
\Im a(y,Z_y) \geq \frac{1}{C} |\Im Z_y|^2.
\end{equation*} Set
\begin{equation*}
h(y,Z) = a(y,Z+Z_y) - a(y,Z_y),
\end{equation*}which is defined close to $(y_0,0)$ and modulo a small error induces in $Z$ a quadratic form. One has 
\begin{equation*}
\partial_Z h(y,0) = 0, \qquad \partial_{\overline{Z}} h(y,Z) = \cO(1) \exp( - \frac{1}{C} |\Im(Z + Z_y)|^{\frac{-1}{s-1}}  ).
\end{equation*} Set 
\begin{equation*}
R(y,Z) = 2 \int_0^1 (1 - \theta) h_{Z,Z}''(y,\theta Z) \ud \theta.
\end{equation*} Since $u \mapsto \exp( - \frac{1}{C} u^{\frac{-1}{s-1}}  )$ is increasing one has 
\begin{equation*}
\partial_{\overline{Z}} h(y,Z) = \cO(1) \exp( - \frac{1}{C} (   |\Im Z |^{\frac{-1}{s-1}} + |\Im Z_y|^{\frac{-1}{s-1}}   )    )
\end{equation*} Writing 
\begin{equation*}
R(y,Z )= i Q^t(y,Z )Q(y,Z ), \qquad Q(y_0,0 ) = A^{-1}, \quad A^t R(y_0,0 )A = iId,
\end{equation*} we define a change of coordinates in $\comp^{2n}$ by
\begin{equation*}
\overline{Z}(Z ) = Q(y,Z - Z_y )(Z - Z_y ),
\end{equation*} since $Q(y_0,0 ) \in GL(2n,\comp )$. In these coordinates one has 
\begin{equation*}
a(y,Z ) = a(y,Z_y ) + \frac{i}{2} \langle \overline{Z}, \overline{Z} \rangle + \rho(y,Z ),
\end{equation*} with 
\begin{equation*}
 \langle \overline{Z}, \overline{Z} \rangle := |\overline{X}|^2 - |\overline{Y}|^2 + 2i \overline{X} \cdot \overline{Y}, \qquad \overline{Z} = \overline{X} + i \overline{Y} \in \comp^{2n}. 
\end{equation*} The function $\rho$ is small and
\begin{equation*}
|\rho(y,Z )| = \cO(1 ) \exp (-\frac{1}{C} |\Im ( Z - Z_y )|^{-\frac{1}{s-1}}   ).
\end{equation*} The map $Z\mapsto \overline{Z}(Z )$ has an inverse $\overline{Z} \mapsto \overline{Z}(Z )$ and these two maps depend on $y$ also. Define for $\sigma \in [0,1]$ the integration paths 
\begin{equation*}
\Gamma_{y,\sigma} : \overline{X} \mapsto Z(\overline{Z}_{\sigma} ), \qquad \overline{Z}_{\sigma} = \overline{X} + i \sigma g(y,\overline{X} ), \quad \overline{X} \in \R^{2n},
\end{equation*} where $g$ is smooth and such that $\R^{2n}$ is given by $\overline{Y} = g(y,\overline{X} )$ in the new coordinates. Now, using Stokes formula we deform $\Gamma_{y,1} = \R^{2n}$ into $\Gamma_{y,0}$. Following \cite[Lemma 2.4]{MelinSjostrand}, we have 
\begin{equation*}
\Im a(y,Z(\overline{Z}_{\sigma} ) \geq \frac{1}{C}(1-\sigma ) ( |\Im Z_y|^2  + |\overline{X}|^2  ) \geq \frac{1}{C'} |\Im Z(\overline{Z}_{\sigma}|^2.
\end{equation*} Define the $(2n,0 )$ form in $\comp^{2n}$ by 
\begin{equation*}
\omega_h = f_h(Z ) \ud Z_1 \wedge \cdots \wedge \ud Z_{2n}, \qquad f_h(Z ) = e^{\frac{i}{h} a(y,Z )} u_h(Z ),
\end{equation*} since 
\begin{equation*}
\partial_{\overline{Z}} f_h = e^{\frac{i}{h} a(y,Z )} \left( \partial_{\overline{Z}} u_h + \frac{i}{h}u_h \partial_{\overline{Z}} a \right),
\end{equation*} we have 
\begin{equation*}
\iint_{\R^{2n}} e^{\frac{i}{h} a(y,X ) } u_h(X ) \ud X  = \iint_{\Gamma_{y,1}} e^{\frac{i}{h} a(y,Z ) } u_h(X ) \ud Z_1 \wedge \dots \wedge \ud Z_{2n}
\end{equation*} and 
\begin{equation}
 \iint_{\Gamma_{y,0}} e^{\frac{i}{h} a(y,Z ) } u_h(X ) \ud Z_1 \wedge \dots \wedge \ud Z_{2n} = \iint_{V} e^{\frac{i}{h} a(y, Z_y ) } u_h( Z(\overline{X} ) e^{-\frac{1}{2h} |\overline{X}|^2 + \frac{i}{h} \rho  } \left|\frac{\partial Z}{\partial \overline{X}}  \right| \ud \overline{X},  \label{eq:256}
\end{equation} where $V$ is a neighbourhood of the origin in $\R^{2n}$. \par 

Using \cite[Lemma 2.5]{MelinSjostrand} we may reduce the computation of the asymptotics of the right-hand side of (\ref{eq:256}) to the case $\rho = 0$. Indeed, if $Y \mapsto (AY,Y )$ is a non-degenerate quadratic form with $\Im A \geq 0$, and $\chi$ is a $\cG^s$ cut-off function close to the origin in $\R^N$ we can write 
\begin{equation*}
h^{-\frac{N}{2}} \int e^{\frac{i}{2h} (AY,Y ) } u_h(X + Y ) \ud Y = \ell_{\chi}(X ) + r_{\chi}(X), 
\end{equation*} for 
\begin{equation*}
\ell_{\chi}(X ) = h^{-\frac{N}{2}} \int e^{\frac{i}{2h} (AY,Y ) } u_h(X + Y ) \chi(Y ) \ud Y, \qquad  r_{\chi}(X) =  h^{-\frac{N}{2}} \int e^{\frac{i}{2h} (AY,Y ) } u_h(X + Y ) (1 - \chi(Y ) ) \ud Y.
\end{equation*} 
Now,as 
\begin{equation*}
| \partial_X^{\alpha} r_{\chi}(X )| \leq C^{1+|\alpha|} \alpha!^s \exp( - \frac{1}{C} h^{-\frac{1}{s}}  ),
\end{equation*} we deduce that $r_h$ is a small $\cG^s$ remainder. On the other hand, $\ell_X$ is a $\cG^s$ symbol of the same order as $u_h$ having the asymptotic expansion 
\begin{equation*}
\ell_{\chi}(X ) \sim C_A \sum_{\nu = 0} \frac{h^{\nu}}{\nu!(2i )^{\nu}} (A^{-1}D,D )^{\nu} u_h(X ), \qquad C_A = (\frac{1}{2\pi i}\det A )^{-\frac{1}{2}}.   
\end{equation*} As a consequence, 
\begin{equation*}
v_h(y ) := e^{-\frac{i}{h} a(y,Z_y ) } \int_{\R^{2n}} e^{\frac{i}{h} a(y,X ) }u_h(X ) \ud X 
\end{equation*} is a $\cG^s$ symbol of order $n$ admitting an expansion of the form 
\begin{equation*}
v_h(y ) \sim \sum_{\nu = 0}^{\infty} h^{\nu + n} C_{\nu,y}(D )u_h(Z_y )
\end{equation*} with $C_{\nu,y}(D )$ differential operators of degree less or equal than $2\nu$. The first term above rewrites as $h^nC_0(y ) u_h(Z_y )$ for a function $C_0(y )(2\pi )^{-n}$ which is a suitable branch of the square root of $\det(\frac{1}{i} a_{Z,Z}''(y,Z_y )  )^{-1}$. From \cite{HitrikLascarSjostrandZerzeri} we deduce that the term 
\begin{equation*}
R_N(y ) :=  \sum_{\nu = N}^{\infty} h^{\nu + n} C_{\nu,y}(D )u_h(Z_y )
\end{equation*} satisfies 
\begin{equation*}
|\partial_y^{\gamma} R_N(y ) | \leq C^{1+\gamma + N} \gamma!^s N!^{2s-1} h^{N+n},
\end{equation*} where the Gevrey loss is an standard observation. Moreover the last term coming from Stokes formula is also a small $\cG^s$ reminder at least when $a(y,\cdot )$ depends analytically in $y$, which is the case for the phase $a(y,X ) = (y-z )\eta + \varphi(z )$, $X=(z,\eta )$. Indeed, writing this term as
\begin{equation*}
R(y ) = \iint_{\bigcup_{\sigma \in [0,1]} \Gamma_{y,\sigma} } e^{\frac{i}{h} a(y,Z ) } \left(  \partial_{\overline{Z}} u_h + \frac{iu_h}{h} \partial_{\overline{Z}}a \right) \wedge \ud Z_1 \wedge \dots \wedge \ud Z_{2n}
\end{equation*} and computing $\partial_y^{\lambda} R(y )$ one has 
\begin{equation}
\partial_y^{\lambda} R(y ) = \sum \frac{\lambda!}{\mu! \nu!} \iint_{\bigcup_{\sigma \in [0,1]} \Gamma_{y,\sigma} } e^{\frac{i}{h} a(y,Z ) } \frac{\nu!}{\ell! \nu_1! \dots \nu_{\ell}!} \partial_y^{\nu_1} a \dots \partial_y^{\nu_\ell} a h^{-\ell} \partial_y^{\mu} g_h  \wedge \ud Z_1 \wedge \dots \wedge \ud Z_{2n} \label{eq:262}
\end{equation} where the sum runs over the set 
\begin{equation*}
\nu + \mu = \lambda, \qquad 
\nu_1 + \cdots \nu_{\ell} = \nu, \qquad 
\ell = 1, \cdots, \nu
\end{equation*} and $g_h =  \partial_{\overline{Z}} u_h + \frac{iu_h}{h} \partial_{\overline{Z}}a $. For $Z \in \bigcup_{\sigma \in [0,1]}$ one has 
\begin{equation*}
\Im a(y,Z ) \geq \frac{1}{C} |\Im Z|^2.
\end{equation*} Moreover, $\partial_{\overline{Z}}u_h$ and $\partial_{\overline{Z}}a$ are bounded by $C \exp \left( -\frac{1}{C} |\Im Z|^{-\frac{1}{s-1}}\right)$ as well as their derivatives by construction. We deduce estimates for the RHS of (\ref{eq:262}), since 
\begin{equation*}
\frac{|\partial_y^{\nu_1} a |}{\nu_1!}... \frac{|\partial_y^{\nu_l} a |}{\nu_l!}  \leq C^{|\nu| + 1}, \qquad  h|\partial_y^{\mu} g_h| \leq C^{1 + |\mu|} \mu!^s \exp(-\frac{1}{C} |\Im Z|^{-\frac{1}{s-1}} )  
\end{equation*} and 
\begin{equation*}
\left|   e^{\frac{i}{h} a(y,Z) }    \right| \leq e^{-\frac{1}{Ch}|\Im Z|^2 }  \sum_{\mu + \nu = \lambda} \frac{\lambda! \nu!}{\mu! \nu! \ell!} h^{-\ell}M! N! h^N |\Im Z|^{-2N} |\Im Z|^{\frac{M}{s-1}} \mu!^s,
\end{equation*} for all $N,M>0$. Choosing $N = \ell$ and ·$M=2 \ell(s-1)$ one has the bound 
\begin{equation*}
C^{1 + |\lambda|} \mu!^s \nu! \ell!^{2s-2} \leq C^{1+|\lambda|} \mu!^s \nu!^{2s-1} \leq C^{1 + |\lambda|} \lambda!^{2s-1}.
\end{equation*} Hence, $\partial_y^{\lambda}R(y)$ is bounded by 
\begin{equation*}
C^{1+|\lambda|} \lambda!^{2s-1} \exp( -\frac{1}{C} h^{\frac{-1}{2s-1}}   )
\end{equation*} so $R$ is a small $\cG^{2s-1}$ remainder.

\end{proof}

We use next Lemma \ref{lem:MelinSjostrandGevrey} to recover the asymptotic (\ref{eq:MelinSjostrand}). We can also check that on $\Gamma$ $P'a$ has the form 
\begin{equation*}
P'a = \frac{h}{i} \partial_{\Re x_1} a + \frac{h}{i} \frac{\partial p}{\partial \eta} (y,-\varphi_y') \partial_y a + ca + R'a
\end{equation*} with $c$ a symbol of order $-1$ and $R'$ a linear map $\cS_s^{m'} \rightarrow \cS_s^{m'-2}$. \par 

We solve inductively the equations on $\Gamma$:
\begin{equation*}
\frac{h}{i} v'a_0 + ca_0 = hb, \qquad \frac{h}{i} v'a_j + ca_j + R'a_{j-1} = 0, \quad j \geq 1,
\end{equation*} for 
\begin{equation*}
v' = \partial_{\Re x_1} + \frac{\partial p}{\partial \eta}(y,-\varphi_y')\partial_y + \overline{  \frac{\partial p}{\partial \eta}   }(y, -\varphi_y') \partial_{\overline{y}}.
\end{equation*}  We notice that $v'|_{\Gamma}$ is tangent to $\Gamma$ as a subset of $\comp^{2n}$. \par 

In order to conclude the proof, we need to check that the remainder 
\begin{equation*}
b'(x,y) = \frac{1}{h}e^{\frac{h}{i} \varphi  } \left( h D_{\Re x_1} - h^m P^t(y,D_y) \right)(a' e^{\frac{i}{h} \varphi} ) - b, 
\end{equation*} when $a' \sim \sum_{j\geq 0} a_j'$, $a_j' \in \cS_s^{-j}(\comp^n \times \comp^n)$ and $a_j'|_{\Gamma} = a_j $ for $a_j$ solving the transport equations above is small, i.e.:
$\left|b'(x,y)\right|\leq c\left\{ \exp(\frac{-1}{c}\left|y-y(x)\right|^{\frac{-1}{s-1}})+\exp(\frac{-1}{c}h^{\frac{-1}{s}})\right\} $

\color{black}

\newpage

\bibliographystyle{plain}                            

\begin{thebibliography}{99}

\bibitem{BedrossianMasmoudiMouhot}
J.Bedrossian, N.Masmoudi and C. Mouhot.
Landau Damping: Paraproducts and Gevrey Regularity,
\emph{Annals of PDE}, volume 2, 4 (2016). 





\bibitem{BoutetKree} 
L. Boutet de Monvel and P. Kr\'ee.
Pseudo-differential operators and Gevrey classes.
\emph{Ann. Inst. Fourier}, vol.7, 295-323 (1967).


\bibitem{Carleson} 
L. Carleson. 
On universal moment problems. 
\emph{Math. Scandinavica}, 9(1b): 197-206, 1961.



\bibitem{DuistermaatHormander}
J.J. Duistermaat and Lars Hörmander,
Fourier integral operators. II, 
\emph{Acta Math.}, Volume 128 (1972) no. 3-4, pp. 183-269. 


\bibitem{Egorov} 
Yu. V. Egorov. 
On canonical transformations of pseudo-differential operators, 
\emph{Uspechi Mat. Nauk},  25, 235-236, (1969).


\bibitem{Egorov84} 
Yu. V. Egorov. 
\emph{Linear Differential Equation of Principal Type}. Moscow: Nauka. English transl.: New
York: Con temp. Sov. Math. 1986, ZbI.574.35001.
\bibitem{EgorovBook} 
Yu. V. Egorov. 
\emph{Partial Differential Equations IV: Microlocal Analysis and Hyperbolic Equations},
Vol. 33 in Encyclopedia of Mathematical Sciences,  Springer (1993).






\bibitem{Eskin}
G. Eskin.
\emph{Lectures on linear partial differential equations}.
Volume 123 in Graduate Studies in Mathematics Series, 
American Mathematical Society, 2011.





\bibitem{GerardVaretMasmoudi}
D. G\'erard-Varet, Y. Maekawa and N. Masmoudi. 
Gevrey stability of Prandtl expansions for $2$-dimensional Navier–Stokes flows. 
\emph{Duke Math. J.} 167 (13) 2531 - 2631, 15 September 2018.


\bibitem{Gramchev}
T. Gramchev,
Classical pseudodifferential operators and Egorov's theorem in the Gevrey classes $G^s, s > 1$,
\emph{Russ. Math. Surv.} 40 125 (1985).



\bibitem{HitrikLascarSjostrandZerzeri}
M. Hitrik, R. Lascar, J. Sj\"ostrand and M. Zerzeri. 
Semiclassical gevrey operators in the complex domain. 
\emph{arXiv:2009.09125}, 2020, \emph{Ann. Inst. Fourier} (2023).


\bibitem{HitrikSjostrand}
M. Hitrik and J. Sj\"ostrand. 
Two minicourses on analytic microlocal analysis. 
In \emph{Algebraic and Analytic Microlocal Analysis}, pages 483-540. Springer, 2013.


\bibitem{Hormander} 
Lars H\"ormander. 
\emph{The analysis of linear partial differential operators}, Vols I-IV, Springer, 1983.

\bibitem{BLascar} 
Bernard Lascar. 
Propagation des singularit\'es Gevrey pour des op\'erateurs hyperboliques. 
\emph{Amer. J. of Math.}, 110 pages 413-449 (1988).


\bibitem{LascarLascar}
B. Lascar and R. Lascar. 
FBI transforms in Gevrey classes. 
\emph{Journal d'Analyse Math\'ematique}, 72(1):105-125, (1997).

\bibitem{LascarLascarMelrose}
B. Lascar, R. Lascar and R. Melrose.
Propagation des singularit\'es Gevrey pour la diffraction. 
\emph{Communications in partial differential equations}, 16(4-5):547-584, (1991).

\bibitem{LebeauGevrey3}
G. Lebeau. 
R\'egularit\'e Gevrey 3 pour la diffraction. 
\emph{Communications in Partial Differential Equations}, 9(15):1437-1494, (1984).



\bibitem{LernerBook}
N. Lerner.
\emph{Metrics on the phase space and non-selfadjoint pseudo-differential operators}, 
Pseudo-Differential Operators. Theory and Applications, Volume 3, Birkhäuser, 2010, xii+397 pages.

\bibitem{LernerEgorov}
Nicolas Lerner,
Sur deux contributions de Y. V. Egorov (1938–2018).
\emph{Annales de la Faculté des sciences de Toulouse : Mathématiques,} Serie 6, Volume 28 (2019) no. 1, pp. 1-9. 


\bibitem{MelinSjostrand}
A. Melin and J. Sj\"ostrand,
Fourier Integral Operators with complex phase functions. 
\emph{Lecture Notes in Mathematics,} vol. 459, pp. 120-224 (1975). 
Springer.


\bibitem{MouhotVillani}
C. Mouhot and C. Villani.
On Landau damping.
\emph{Acta Mathematica}, volume 207, pages 29–201 (2011).



\bibitem{SjostrandAsterisque} 
J. Sj\"ostrand. 
\emph{Singularit\'es analytiques microlocales.} Ast\'erisque 1982.

\bibitem{Zworski}
M. Zworski. 
\emph{Semiclassical analysis}, 
volume 138. American Math. Soc., 2012.



\end{thebibliography}
{}

\end{document}